\definecolor{Orange}{RGB}{255,127,0}
\definecolor{Purple}{RGB}{255,127,0}
\newcommand{\leqnomode}{\tagsleft@true}
\newcommand{\reqnomode}{\tagsleft@false}
\date{}
\def\nd{\noindent}
\def\thend{\rule{3mm}{3mm}}
\newtheorem{theorem}{Theorem}[section]
\newtheorem{definition}{Definition}[section]
\newtheorem{proposition}{Proposition}[section]
\newtheorem{lemma}{Lemma}[section]
\newtheorem{remark}{Remark}[section]
\newtheorem{exm}{Example}[section]
\newcommand{\cqd}{\hspace{10pt}\fbox{}}
\newcommand{\eps}{\epsilon}
\newcommand{\R}{\mathbb{R}}
\thanks{Corresponding author: Claudiney Goulart}
\thanks{The first author was also partially supported by CNPq with grant 309026/2020-2. The second author was partially supported by Fapeg with grant 202110267000424}
\begin{document} 
	
	\title[Nonlocal elliptic problems with prescribed norm in the $L^p$-subcritical and $L^p$-critical growth
	]{Quasilinear nonlocal elliptic problems with prescribed norm in the $L^p$-subcritical and $L^p$-critical growth 
	}
	
	\vspace{1cm}
	%%%%%%%%%%%%%%%%%%%%%%%%%%%%%%%%%%%%%%%%%%%%%%%%%%%%%%%%%%%%%%%%%%%%%%%%

	\author{Edcarlos D. Silva}
	\address{Edcarlos D. Silva \newline  Universidade Federal de Goi\'as, IME, Goi\^ania-GO, Brazil}
	\email{\tt edcarlos@ufg.br}
	
	\author{J. L. A. Oliveira}
	\address{J. L. A. Oliveira \newline Universidade Federal de Goi\'as, IME, Goi\^ania-GO, Brazil }
	\email{\tt jefferson\_luis@discente.ufg.br}
	
	\author{C. Goulart}
	\address{C. Goulart \newline Universidade Federal de Jata\'i, Jata\'i-GO, Brazil }
	\email{\tt claudiney@ufj.edu.br}
	
	\subjclass[2010]{35J05,35J15,35J20,35J60} 
	\keywords{Normalized solution, Fractional $p$-Laplacian, Asymptotically periodic potentials, Periodic potentials, $L^p$-Subcritical growth, $L^p$-Critical growth}
	\begin{abstract}
		It is established existence of solution with prescribed $L^p$ norm for the following nonlocal elliptic problem:
		\begin{equation*}
			\left\{\begin{array}{cc}
				\displaystyle (-\Delta)^s_p u\ +\ V (x) |u|^{p-2}u\  = \lambda |u|^{p - 2}u + \beta\left|u\right|^{q-2}u\ \hbox{in}\ \mathbb{R}^N, \\
				\displaystyle \|u\|_p^p = m^p,\ u \in W^{s, p}(\mathbb{R}^N).
			\end{array}\right.
		\end{equation*}
		where $s \in (0,1), sp < N, \beta > 0 \text{ and } q \in (p, \overline{p}_s]$ where $\overline{p}_s =p+ sp^2/N$.
		The main feature here is to consider $L^p$-subcritical and $L^p$-critical cases. Furthermore, we work with a huge class of potentials $V$ taking into account periodic potentials, asymptotically periodic potentials, and coercive potentials. More precisely, we ensure the existence of a solution of the prescribed norm for the periodic and asymptotically periodic potential $V$ in the $L^p$-subcritical regime. Furthermore, for the $L^p$ critical case, our main problem admits also a solution with a prescribed norm for each $\beta > 0$ small enough.
	\end{abstract}

	\maketitle
	\section{Introduction}
	In this work, we investigate a class of problems that have attracted significant attention and been widely studied in recent years. More specifically, we consider the existence of normalized solution for the following nonlocal elliptic problem:
	\begin{equation}\label{P}\tag{$P_m$}
		\left\{\begin{array}{cc}
			\displaystyle (-\Delta)^s_p u\ +\ V (x) |u|^{p-2}u\  = \lambda |u|^{p - 2}u + \beta\left|u\right|^{q-2}u\ \hbox{in}\ \mathbb{R}^N, \\
			\displaystyle \|u\|_p^p = m^p,\ u \in W^{s, p}(\mathbb{R}^N),\end{array}\right.
	\end{equation}
	where the parameter $\lambda$ is determined by the Lagrange Multiplier Theorem, $N>ps, s\in(0,1), \beta > 0$ and $1 < p < q \leq \overline{p} = p + sp^2/N$. Later on, we shall consider the hypotheses on the potential $V: \mathbb{R}^N \to \mathbb{R}$ with $V \geq 0, V \not \equiv 0$.
	
	It is worthwhile to mention that the operator fractional $p$-Laplacian acts as follows:
	$$(- \Delta)^s_p u(x) := 2 \lim_{\varepsilon \to 0^+} \int_{\R^N \setminus B_\varepsilon(x)} \frac{|u(x) - u(y)|^{p- 2}(u(x) - u(y))}{|x - y|^{N+ sp}} dy $$
	with $p \in (1, \infty)$, $s \in (0, 1)$ and $N > sp$, see \cite{neza} for further details.
	
	It is crucial to note that nonlocal elliptic problems governed by the fractional p-Laplacian operator have been extensively studied in recent years, see  \cite{Vin2, tue, Maxfor, Lions, Lions2}. Recall that several contributions for this kind of operator have been considered in recent years under several different assumptions on the potential $V$ as well as on the nonlinear term $g(t) = \lambda |t|^{p - 2}t + \beta\left|t\right|^{q-2}t, t \in \mathbb{R}$. One of the main difficulties for this kind of problem is the lack of compactness from the fractional Sobolev spaces into the Lebesgue spaces. In order to overcome this difficulty some tools from the nonlinear analysis have been explored recovering some kind of compactness.  Furthermore, substantial research has been conducted on the fractional $p$-Laplacian and fractional Laplacian operators, particularly with regard to prescribed norm conditions. The pioneering work in \cite{J1} provides a foundational approach to nonlocal elliptic problems, and we refer readers to additional contributions such as \cite{Cin, Hirata, J2, j3, Clau2, Clau3, Clau5, al1, qua, frac1, wa} and the references therein. More recently, in \cite{calva} the authors considered the existence of normalized solutions for a problem with general potential $V: \mathbb{R}^N \to \mathbb{R}$ and nonlinearities. This study also addressed the case $V \leq 0$ and proved the existence of normalized solutions under further assumptions on  $V$.

For the semilinear local case, that is, putting $p = 2$ and $s = 1$, we observe that finding solutions with a prescribed norm is relevant in physical studies such as nonlinear optics and the theory of water waves due to the fact that the norm is preserved over time. Furthermore, the fractional Laplacian operator has been accepted as a model for diverse physical phenomena such as diffusion-reaction equations, quasi-geostrophic theory, Schr\"odinger equations, Porous medium problems, see for instance \cite{aka,biboa,ya,consta,pablo,las}. For the quasilinear case, we refer the reader to \cite{met,bjor} where some physical applications are discussed taking into account several applications such as continuum mechanics, phase transition phenomena, populations dynamics, image processes, game theory, see \cite{ber,cafa,las}.

 It is important to stress that 
quasilinear reaction-diffusion equations have attracted some attention in the last few years. The main motivation for this kind of problem is to combine nonlinear and quasilinear nonlocal terms to model a nonlinear diffusion. On this subject, we refer the reader to \cite{vasquez1,vasquez2,vasquez3} where many of these nonlinear nonlocal diffusion problems are considered.

	Our main contribution in the present work is to consider existence of solutions with a prescribed mass on $S_m$ where $S_m$ is the sphere in $L^p(\mathbb{R}^N)$. In fact, we prove the existence of solutions for the Problem \eqref{P} with $q \in (p, p + sp^2/N)$, assuming that
	the potential $V$ is periodic or asymptotically periodic, see Theorems \ref{TB1} and \ref{TB2} ahead. To do that, we apply a hypothesis introduced in \cite{Elves} which has been used in several works, see for example \cite{Marc1,elv3}. On the other hand, inspired in part by \cite{Claud4}, we also show the existence of critical points for the energy functional $J_\eps$ given by \eqref{jeps}, see also Theorem \ref{TB3} ahead. Furthermore, we also consider the $L^p$-critical case, that is, $q= p + sp^2/N$ proving the existence of a solution for the Problem \eqref{P} where $\beta>0$ is small enough, see Theorem \ref{TB4} ahead. Under our assumptions we emphasize that the potential $V$ can be zero in some subsets $\Omega \subset \mathbb{R}^N$, that is, we assume that $V(x) \geq 0, x \in \Omega$. As a consequence, we consider potentials $V$ where the embedding from the fractional Sobolev spaces into the Lebesgue spaces is not standard. In fact, taking into account that $V$ can be zero in some subsets $\Omega \subset \mathbb{R}^N$, we consider a vast class of potentials $V$. For this kind of problem we refer the interested reader to seminal works \cite{bart1,Rab, Weth, mi}.
	
	It is important to mention that there exist several works considering the nonlocal elliptic problems driven by the fractional $p$-Laplacian or for the fractional Laplacian involving normalized solutions, see \cite{plap3,plap1, loo}. In the work \cite{plap3}, the authors established the existence of solutions for a problem involving the p-Laplacian operator with a potential $V \geq 0$. In the case $V\not \equiv 0$, they added the assumptions $$V \in L^{\infty}(\mathbb{R}^N),\ \inf_{x \in \mathbb{R}^N}V(x) = 0,\  \hbox{and}\ \lim_{|x| \to \infty } V(x) = \infty. $$
These conditions enabled the authors to demonstrate a compactness result, establishing the existence of solutions in both the$L^p$-subcritical and $L^p$-critical cases. We note that, in our work, the assumption $\displaystyle \lim_{|x| \to \infty } V(x) = \infty$ is not required, which presents distinct challenges in obtaining an existence result for solutions.

 In \cite{plap4} the authors investigated the following elliptic problem
	\begin{equation}\label{int3} - \Delta_p u + V(x)|u|^{p-2}u = \mu |u|^{r-2}u + |u|^{q-2}u\ \hbox{in}\ \mathbb{R}^N,     
	\end{equation}
	where $r = p$ or $r = 2$. Assuming that $V(x) = |x|^k,$ the authors proved that Problem \eqref{int3} has at least one nontrivial solution with the prescribed norm. Moreover, they considered some additional assumptions around the power $k$ and $p < q <  p^*$ with $p^* = Np/(N - p)$. In \cite{plap2} the authors studied the following $L^p$-critical problem
	\begin{equation}\label{int2} - \Delta_p u + V(x)|u|^{p-2}u = \mu |u|^{p-2}u + a|u|^{s-2}u    
	\end{equation}
	where $a \geq 0, p \in (1, N)$, $\mu \in \mathbb{R}$ and $s = p + p^2/N$. In that paper, the potential $V$ satisfies $\displaystyle \lim_{|x| \to \infty} V(x) = + \infty$ and $V(x) \geq 0, x \in \mathbb{R}^N$. Under these conditions, they showed that there exists $a^* > 0$ such that the Problem \eqref{int2} has at least one solution with prescribed norm for all $a \in [0, a^*)$. Furthermore, the authors considered also the following minimization problem: 
	$$e(a) = \inf \left\{E_a(u): u \in \mathcal{H}, \int_{\mathbb{R}^N}|u|^p dx= 1 \right\}$$ where 
	$$\mathcal{H} = \left\{u \in W^{1, p}(\mathbb{R}^N): \int_{\mathbb{R}^N}V(x)|u|^p dx < \infty\right\}.$$ In that work, they prove also that the energies of the solutions are strictly positive for each $a \in [0, a^*)$, that is, $e(a) > 0$. Moreover,  $\displaystyle \lim_{a \uparrow
		a^*}e(a) = e(a^*) = 0$.
	Regarding the fractional Laplacian operator several works involving norms fixed in $L^2$ have been developed in recent years. For this kind of problems we refer the reader to \cite{frac1,Cin,frac3}. 
	
	It is important to emphasize that the existence of normalized solutions associated with the fractional $p$-Laplacian operator have been considered in the last years, see for instance \cite{pfrac}. In that work, the authors established the existence of normalized solutions for Problem \eqref{P}, considering both the $L^p$-subcritical and $L^p$-supercritical cases assuming that $V \equiv 0$. The $L^p$-critical is also considered assuming that $V \leq 0$. The main ingredient in that work, among other things, is to employ the principle of concentration-compactness.  In the present work, our approach differs significantly, ensuring the existence of normalized solutions by applying specific hypotheses on the potential $V \geq 0$. Under our assumptions, we are able to demonstrate the existence of normalized solutions even when $V$ vanishes on some open subsets $\Omega \subset \mathbb{R}^N$ For further insights and additional results under different conditions on the potential $V$ we refer the reader to recent works, such as \cite{ch1, ch2, ch3}.

	\subsection{Assumptions and statement of the main results}
	In the present work our main objective is to find the existence of local minimizers $u \in S_m$ for the energy functional $J$ where $S_m$ is defined in \eqref{S}. Now, we introduce the set $\mathcal{F}$ which is inspired by \cite{Elves} as the class of functions $f \in C(\mathbb{R}^N) \cap L ^{\infty}(\mathbb{
		R}^N)$ such that for all $\varepsilon > 0$ the Lebesgue measure of the set $\{x \in \mathbb{R}^N:|f(x)| \geq \varepsilon\}$ is finite. Here we refer the reader to \cite{Marc1,elv3} and references therein.
	Now, we shall consider the following hypotheses:
	\begin{itemize}
		\item[($V_1$)] The potential $V \in L^{\infty}(\mathbb{R}^N)$ is $1-$ periodic and $V(x) \geq 0$, $V \not\equiv 0$ for all $x \in \mathbb{R}^N$.
		\item[($V_2$)] The potential $V  \in C(\mathbb{R}^N) \cap L ^{\infty}(\mathbb{
			R}^N)$ is asymptotically periodic, i.e., there exists a potential $V_{\theta}  \in C(\mathbb{R}^N) \cap L ^{\infty}(\mathbb{
			R}^N)$, $1$-periodic, $V_{\theta} \not\equiv 0$, with $ V_{\theta}(x) \geq V(x) \geq 0 $, $ V \not\equiv V_{\theta}$ and $V \not\equiv 0$ such that $V - V_{\theta} \in \mathcal{F}$.		
	\end{itemize}
At this stage, we consider our function space $X$ as follows 
	$$\displaystyle X = \left\{u \in W^{s,p}(\R^N): \int_{\R^N} V(x)|u|^p dx < \infty\right\}.$$
Notice that for the case where $V$ is a bounded potential we obtain that $X = W^{s,p}(\mathbb{R}^N)$. In general, the function space $X$ is only a closed subset of $W^{s,p}(\mathbb{R}^N)$. Now, we remember that $W^{s, p}(\mathbb{R}^N)$ is the fractional Sobolev space, see \cite{neza}. Here we consider some examples where the potential
$V$ vanishes on some nonempty subsets of $\mathbb{R}^N$. First, we observe that in view of Proposition \ref{reflexive} ahead, the functional space $X$ is a reflexive Banach space equipped with the norm: 
	\begin{equation}\label{norma}
		\|u\| = \left([u]^p + \int_{\R^N} V(x) |u|^p dx\right)^{\frac{1}{p}}, u \in X.
	\end{equation}
	It is well known that $X$ is a reflexive Banach space using the usual norm given by 
 \begin{equation}
     \|u\|_{*} = ([u]^p + \|u\|_p^p)^{1/´p}, u \in X.
 \end{equation}
 One of the main features in the present work is to prove that the norms 
 $\|\cdot\|$ and $\|\cdot\|_{*}$ are equivalents, see Proposition \ref{ob11} ahead. Hence, we can look for weak solutions for our main problem using the norm $\|\cdot\|$. 
 Recall that $[u]$ represents the well-known Gagliardo seminorm of function $u$ given by
	$$\displaystyle [u] = \left(\int_{\R^N}\int_{\R^N} \frac{|u(x) - u(y)|^p}{|x - y|^{N + sp}} dx dy\right)^{\frac{1}{p}}, u \in X.$$
	In the present work, the continuous embedding from the Sobolev spaces into the Lebesgue spaces does not work directly. In fact, by using the fact that $V$ can be zero in some subsets $\Omega \subset \mathbb{R}^N$, the standard continuous embedding is not verified. Hence, we need to apply a hypothesis introduced by Sirakov \cite{Sira}. Namely, we consider the following statement:
	\begin{itemize}\item[($V_3$)] $\displaystyle  \sigma = \displaystyle \inf_{u \in X} \left\{\|u\|^p : \|u\|_p^p = 1\right\} > 0.$
	\end{itemize}
The previous hypothesis is fundamental in order to recover the continuous embedding of $X$ into the Lebesgue spaces $L^t(\mathbb{R}^N)$ for each $t \in [p, p^*_s]$.
\begin{remark}  
		It is important to stress that our potential $V$ can be zero in some subsets of $\Omega \subset \mathbb{R}^N$. In other words, under our assumptions, we can consider some nonnegative potentials $V$ such that the set $[V = 0] = \{ x \in \mathbb{R}^N: V(x) = 0 \}$ is not empty. In fact, we shall give some examples of potentials $V$ potentials that vanish in some subset of positive Lebesgue measure.
	\end{remark}

 Now, we consider some examples of an asymptotically periodic potential that satisfies the condition $(V_2)$. Namely, we consider
	\begin{exm}
		Let $V_{\theta}$ be a $1$-periodic potential. Define $$V(x) = \allowdisplaybreaks \left(1 - \dfrac{1}{1 + |x|}\right)V_{\theta}(x),\ \hbox{with}\ x \in \mathbb{R}^N.$$  We claim that $\left(V - V_{\theta}\right) \in \mathcal{F}$. Indeed, consider $A_{\varepsilon} = \{x \in \mathbb{R}^N: |V(x) - V_{\theta}(x)| \geq \varepsilon\}$. We observe that $$V(x) - V_{\theta}(x) = -V_{\theta}(x) \left(\dfrac{1}{1 + |x|}\right).$$ Hence, using the Co-area formula  \cite{Evans}, we get 
		\begin{eqnarray}\nonumber \mu(A_{\varepsilon}) &=& \int_{A_{\varepsilon}} dx \leq \dfrac{1}{\varepsilon^{\ell}} \int_{A_{\varepsilon}} |V(x) - V_{\theta}|^{\ell} dx\leq \dfrac{C}{\varepsilon^{\ell}}\int_{\mathbb{R}^N}\dfrac{1}{\left(1+ |x|\right)^{\ell}} dx\\
			\nonumber \allowdisplaybreaks &=& K_{\varepsilon} \lim_{t \to \infty}\int_{0}^{t} \dfrac{r^{N-1}}{1 + r^{\ell}} dr = K_{\varepsilon} \int_{0}^{1} \dfrac{r^{N-1}}{1 + r^{\ell}} dr + K_{\varepsilon} \lim_{t \to \infty}\int_{1}^{t} \dfrac{r^{N-1}}{1 + r^{\ell}} dr\\
			\nonumber \allowdisplaybreaks &\leq& K_{\varepsilon} + K_{\varepsilon} \lim_{t \to \infty} \dfrac{t^{N - {\ell}}}{N - {\ell}} -  \dfrac{K_{\varepsilon}}{N - \ell} < \infty,\ \forall\ \ell > N.   
		\end{eqnarray}
		The desired claim is now proved.
	\end{exm}

 It is not hard to prove that the usual hypothesis 
 \begin{equation}\label{aleei}
     \displaystyle \lim_{| x| \to \infty} |V(x) - V_{\theta}(x)| = 0 
 \end{equation}
 implies that $(V - V_{\theta}) \in \mathcal{F}$. Moreover,  the last conditions is weak than the conditions given by \eqref{aleei}.
	More specifically, we shall consider an example such that the hypothesis $(V - V_{\theta}) \in \mathcal{F}$ is satisfied in such way that the well-known hypothesis \eqref{aleei} does not work anymore. Namely, we consider the following example:
	
	\begin{exm} Let us consider the potential  $$V(x) = \displaystyle 1 - \dfrac{1}{1 + |x|^{2\alpha} \sin^{2\rho}(|x|)}, x \in \mathbb{R}^N$$ and $V_{\theta} = 1$ with  $\alpha > (N + 2)/2, 0 < \rho < 1/2$.  Let us consider a sequence $(x_k) \subset \mathbb{R}^N$ such that  $|x_k| = k\pi,$ $k \in \mathbb{N}$. It follows that 
		$$|V(x_k) - V_{\theta}(x_k)| = \dfrac{1}{1 + (k \pi)^{2 \alpha} \sin^{2 \rho}(k \pi)} = 1.$$
		As a consequence, $\displaystyle \lim_{k \to \infty} |V(x_k) - V_{\theta}(x_k)| = 1 \not= 0$. On the other hand, we observe that $V - V_{\theta} \in \mathcal{F}$. Indeed, by using the Co-area Theorem, we obtain that 
		\begin{equation}
			\displaystyle \int_{\mathbb{R}^N} \dfrac{1}{1 + |x|^{2\alpha} \sin^{2\rho}(|x|)} dx =  K \int_{0}^{\pi - \delta} \dfrac{r^{N - 1}}{1 + r^{2 \alpha} \sin^{2 \rho}(r)} dr + K \int_{\pi - \delta}^{\infty} \dfrac{r^{N - 1}}{1 + r^{2 \alpha} \sin^{2 \rho}(r)} dr  
		\end{equation}
		In particular, we infer that $$\displaystyle \int_{0}^{\pi - \delta} \dfrac{r^{N - 1}}{1 + r^{2 \alpha} \sin^{2 \rho}(r)} dr < +\infty, \delta \in (0, 1).$$ Furthermore,  over the interval $(\pi - \delta, \infty)$, we consider sub-intervals of the form $I_k = [k \pi - \delta, k \pi + \delta]$, $J_k = (k \pi + \delta, (k+1) \pi - \delta)$ where $k \in \mathbb{N}$ and $0 < \delta < 1$. In this way, by using the assumptions on $\alpha$ and $\rho$, we infer also that
		\begin{equation*} \displaystyle
			\displaystyle   \int_{0}^{\infty} \dfrac{r^{N - 1}}{1 + r^{2 \alpha} \sin^{2 \rho}(r)} dr = \left(\int_{\bigcup_{k = 1}^{\infty} I_k} + \int_{\bigcup_{k = 1}^{\infty} J_k}\right) \dfrac{r^{N - 1}}{1 + r^{2 \alpha} \sin^{2 \rho}(r)} dr + \int_{0}^{\pi - \delta}\dfrac{r^{N - 1}}{1 + r^{2 \alpha} \sin^{2 \rho}(r)} dr < \infty.
		\end{equation*}
		Hence, $V - V_{\theta} \in L^1(\mathbb{R}^N)$ is satisfied. Now, we consider again the set $A_{\varepsilon} = \{x \in \mathbb{R}^N: |V(x) - V_{\theta}(x)| \geq \varepsilon \}$. We claim that $\mu(A_{\varepsilon}) < \infty$. In fact, we observe that
		\begin{eqnarray}\nonumber \displaystyle \mu(A_\varepsilon) & = & \int_{A_{\varepsilon}} dx = \frac{1}{\varepsilon} \int_{A_{\varepsilon}} \varepsilon dx \le \frac{1}{\varepsilon}\int_{A_{\varepsilon}}|V(x) - V_{\theta}(x)| dx  \le \frac{1}{\varepsilon} \int_{\mathbb{R}^N} |V(x) - V_{\theta}(x)| dx \leq \frac{C}{\varepsilon} < \infty.
		\end{eqnarray}
		The estimates above prove the desired result.
	\end{exm}
For the reader's convenience we consider also an example of potential $V$ which is asymptotically periodic where the set $V^{-1}(0)$ is not empty. In fact, we need to show the existence of an example for this kind of potential in such way that hypothesis $(V_3)$ is satisfied. For this purpose, we need to introduce the well-known Gagliardo–Nirenberg inequality for the fractional $p$-Laplacian operator. This inquality is motived by \cite[Theorem 1.4]{tue} which can be stated as follows: 
 \begin{proposition}\label{gagl} Let $s \in (0, 1)$ and $N > sp$. Then there exists a positive constant $C:= C(N, s, p)$ such that for any $u \in W^{s, p}(\mathbb{R}^N)$
 $$\|u\|^{p^*_s}_{p^*_s} \leq C \int_{\mathbb{R}^N} \int_{\mathbb{R}^N} \dfrac{|u(x) - u(y)|}{|x - y|^{N + sp}} dx dy, u \in W^{s,p}(\mathbb{R}^n).$$
 \end{proposition}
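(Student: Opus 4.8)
The plan is to prove the stated inequality in its equivalent form, namely the fractional Sobolev inequality $\|u\|_{p^*_s}^p\le C(N,s,p)\,[u]^p$ with $p^*_s=Np/(N-sp)$; the displayed inequality then follows by raising both sides to the power $p^*_s/p$. First I would reduce the problem. By density of $C_c^\infty(\mathbb{R}^N)$ in $W^{s,p}(\mathbb{R}^N)$ together with Fatou's lemma — if $u_j\to u$ in $W^{s,p}(\mathbb{R}^N)$ then $\|u\|_{p^*_s}\le\liminf_j\|u_j\|_{p^*_s}$ while $[u_j]\to[u]$ — it suffices to treat $u\in C_c^\infty(\mathbb{R}^N)$; and since $\big||u(x)|-|u(y)|\big|\le|u(x)-u(y)|$ and $\|u\|_{p^*_s}=\big\||u|\big\|_{p^*_s}$, I may assume $f:=u\ge0$. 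Set $g(x):=\int_{\mathbb{R}^N}\frac{|f(x)-f(y)|^p}{|x-y|^{N+sp}}\,dy$, so that $\int_{\mathbb{R}^N}g\,dx=[f]^p\le[u]^p$, and the goal becomes $\int_{\mathbb{R}^N}f^{p^*_s}\,dx\le C\,[f]^p$.

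The core is a pointwise estimate. For each $x$ with $f(x)>0$ I would choose $r=r(x)>0$ such that $\big|\{y\in B_r(x):f(y)\ge f(x)/2\}\big|=\frac12|B_r(x)|$; such $r$ exists because the ratio $r\mapsto|B_r(x)|^{-1}\big|\{y\in B_r(x):f(y)\ge f(x)/2\}\big|$ is continuous, tends to $1$ as $r\to0^+$ by continuity of $f$, and tends to $0$ as $r\to\infty$ since $\{f\ge f(x)/2\}$ has finite measure. On the complementary half of $B_r(x)$ one has $f(x)\le2\,(f(x)-f(y))$, so averaging and then inserting $|x-y|^{N+sp}\le r^{N+sp}$ yields
\begin{equation*}
 f(x)^p\ \le\ \frac{2^{p+1}}{|B_r(x)|}\int_{B_r(x)}|f(x)-f(y)|^p\,dy\ \le\ C\,r^{sp}\,g(x).
\end{equation*}
Meanwhile Chebyshev's inequality together with the choice of $r$ gives $r^{N}\le C\,f(x)^{-p^*_s}\|f\|_{p^*_s}^{p^*_s}$, hence $r^{sp}\le C\,f(x)^{-sp\,p^*_s/N}\|f\|_{p^*_s}^{sp\,p^*_s/N}$; combining the two bounds and using the arithmetic identity $p+sp\,p^*_s/N=p^*_s$ produces the pointwise estimate $f(x)^{p^*_s}\le C\,\|f\|_{p^*_s}^{sp\,p^*_s/N}\,g(x)$ for a.e. $x\in\mathbb{R}^N$.

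Finally I would integrate over $x$ to get $\|f\|_{p^*_s}^{p^*_s}\le C\,\|f\|_{p^*_s}^{sp\,p^*_s/N}\,[f]^p$; since $u\in C_c^\infty(\mathbb{R}^N)$ the number $\|f\|_{p^*_s}$ is finite (positive unless $u\equiv0$, a trivial case), so dividing and using $p^*_s\big(1-sp/N\big)=p$ leaves $\|u\|_{p^*_s}^p=\|f\|_{p^*_s}^p\le C\,[f]^p\le C\,[u]^p$, as desired. The step I expect to need the most care is the selection of the radius $r(x)$ and the rigour of the averaging argument for general $u\in W^{s,p}(\mathbb{R}^N)$; this is exactly why the reduction to $u\in C_c^\infty(\mathbb{R}^N)$ — where $f$ is continuous, almost every $x$ is a Lebesgue point, and $\|u\|_{p^*_s}$ is a priori finite so the final division is legitimate — is performed first, the general case then following from the density/Fatou passage above. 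Alternatively, one may observe that the proposition is nothing but the continuous embedding $W^{s,p}(\mathbb{R}^N)\hookrightarrow L^{p^*_s}(\mathbb{R}^N)$, recorded with this dependence of the constant on $N,s,p$ in \cite{neza} and \cite{tue}, and simply invoke it.
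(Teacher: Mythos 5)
Your argument is correct, but it is worth noting that the paper does not prove this proposition at all: it is stated as a quotation of the fractional Sobolev/Gagliardo--Nirenberg embedding, with a pointer to \cite[Theorem~1.4]{tue} (see also \cite{neza}), which is exactly the fallback you offer in your last sentence. What you have written out in full is the standard elementary proof of the embedding $W^{s,p}(\mathbb{R}^N)\hookrightarrow L^{p^*_s}(\mathbb{R}^N)$ (the ball-averaging argument of Savin--Valdinoci type reproduced in the Hitchhiker's guide): density plus Fatou to reduce to $u\in C_c^\infty$, reduction to $f=|u|\ge 0$, the choice of $r(x)$ so that half of $B_{r}(x)$ lies in $\{f\ge f(x)/2\}$, the pointwise bound $f(x)^p\le C r^{sp}g(x)$, the Chebyshev bound $r^N\le C f(x)^{-p^*_s}\|f\|_{p^*_s}^{p^*_s}$, and the exponent bookkeeping $p+sp\,p^*_s/N=p^*_s$ and $p^*_s(1-sp/N)=p$; all of these steps check out. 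Two small remarks. First, the inequality as displayed in the paper is misprinted: the integrand should carry the power $|u(x)-u(y)|^p$ and the exponents do not match under either dilation $u\mapsto tu$ or rescaling $u\mapsto u(\lambda\,\cdot)$; the scaling-invariant statement is $\|u\|_{p^*_s}\le C[u]$, equivalently $\|u\|_{p^*_s}^{p^*_s}\le C[u]^{p^*_s}$, which is the form the paper actually uses later (e.g.\ in Example~\ref{120}), and it is this corrected version that your proof delivers. Second, in the limit $r\to 0^+$ you need continuity of $f$ at $x$, which holds since $f=|u|$ with $u\in C_c^\infty(\mathbb{R}^N)$, so the reduction you perform first is indeed what makes the radius-selection step rigorous. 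In short: a correct, self-contained proof where the paper merely cites the literature; the citation buys brevity, your argument buys transparency and an explicit dependence of the constant on $N,s,p$.
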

 Hence, we consider the following example:
\begin{exm}\label{120} Let $V: \mathbb{R}^N \mapsto \mathbb{R}$ be a potential given by $$V(x) = 1 - \dfrac{1}{\left(1 + |x|\right)^{\rho}}, x \in \mathbb{R}^N$$ where 
$\rho > N/r$, $r =p^*_s/(p^*_s - p)$. In particular, $V(x) \to 1$ as $|x| \to \infty$, that is, $V$ is asymptotically periodic. Moreover $V^{-1} (0) = \left\{x \in \mathbb{R}^N: V(x) = 0\right\} = \{0\}$. Now, we claim that $$\sigma = \sigma(V) = \inf_{u \in X} \left\{\|u\|^p: \|u\|_p^p = 1\right\} > 0.$$
The proof for this claim follows arguing by contradiction. Suppose that there exists a sequence $(u_k) \subset X$ such that $\|u_k\|_p^p = 1$ for all $k \in \mathbb{N}$ and $\|u_k\|^p \to 0$ as $k \to \infty$. Hence, by using \eqref{norma}, we infer that 
$$[u_k]^p \to 0,\ \int_{\mathbb{R}^N}V(x)|u_k|^p dx \to 0.$$
According to the Gagliardo-Nirenberg inequality, we obtain that $\|u_k\|_{p^*_s}^{p^*_s} \leq C [u_k]^{p^*_s}$. It follows that $\|u_k\|_{p^*_s} \to 0$ as $k \to \infty$. Therefore, $u_k \to 0$ a. e. in $\mathbb{R}^N$. Now, we see that
\begin{equation}\label{ck}
1 = \int_{\mathbb{R}^N}|u_k|^p dx = \int_{\mathbb{R}^N}\left[1 - \dfrac{1}{(1 + |x|)^{\rho}}\right]|u_k|^p dx + \int_{\mathbb{R}^N}\dfrac{|u_k|^p}{(1 + |x|)^{\rho}} dx = o_k(1) + \int_{\mathbb{R}^N}\dfrac{|u_k|^p}{(1 + |x|)^{\rho}} dx  
\end{equation}
Now, we shall estimate the integral in the right-hand side of the identity given just above. Notice that for each $R > 0$ fixed, we obtain that
\begin{eqnarray} \nonumber \displaystyle \int_{\mathbb{R}^N}\dfrac{|u_k|^p}{(1 + |x|)^{\rho}} dx = \int_{B_R(0)}\dfrac{|u_k|^p}{(1 + |x|)^{\rho}}dx + \int_{\mathbb{R}^N\setminus B_R(0)}\dfrac{|u_k|^p}{(1 + |x|)^{\rho}}dx 
\end{eqnarray}
Define $$\Omega = B_R(0), A_k = \int_{\Omega}\dfrac{|u_k|^p}{(1 + |x|)^{\rho}}dx\ \hbox{and}\ B_k = \int_{\mathbb{R}^N\setminus \Omega}\dfrac{|u_k|^p}{(1 + |x|)^{\rho}}dx.$$
Now, by using the Poincaré inequality \cite[Lemma 2. 4]{poin}, we can check that $\|u_k\|^p_{L^p(\Omega)} \leq C [u_k]^p_{\Omega}$ where
$$[u_k]^p_{\Omega} = \int_{\Omega} \int_{\Omega} \dfrac{|u_k(x) - u_k(y)|^p}{|x - y|^{N + sp}} dx dy.$$
Therefore, we see that
\begin{equation}\label{ak}A_k \leq \int_{\Omega} |u_k|^p dx \leq C[u_k]_{\Omega}^p \leq C[u_k]^p \to 0,\end{equation}
as $k \to \infty$. On the other hand, we mention that 
\begin{equation}\label{bkk} B_k = \int_{\mathbb{R}^N \setminus \Omega} \dfrac{|u_k|^p}{(1 + |x|)^{\rho}} dx = \int_{\left[\mathbb{R}^N \setminus \Omega\right] \cap [|u_k| \geq 1]} \dfrac{|u_k|^p}{(1 + |x|)^{\rho} } dx + \int_{\left[\mathbb{R}^N \setminus \Omega\right] \cap [|u_k| \leq 1]} \dfrac{|u_k|^p}{(1 + |x|)^{\rho} } dx    
\end{equation}
Since $1 < p < p^*_s$ it follows from the Gagliardo-Nirenberg inequality that
\begin{equation}\label{bk}
\int_{\left[\mathbb{R}^N \setminus \Omega\right] \cap [|u_k| \geq 1]} \dfrac{|u_k|^p}{(1 + |x|)^{\rho} } dx \leq \int_{\mathbb{R}^N} |u_k|^{p^*_s} dx \leq C [u_k]^{p^*_s}\end{equation}
Furthermore, taking into account the Hölder's inequality with exponents $r = p^*_s/(p^*_s - p)$ and $p^*_s/p$, we deduce that
\begin{equation}\label{bk2} \int_{\left[\mathbb{R}^N \setminus \Omega\right] \cap [|u_k| \leq 1]} \dfrac{|u_k|^p}{(1 + |x|)^{\rho} } dx \leq \left(\int_{\left[\mathbb{R}^N \setminus \Omega\right] \cap [|u_k| \leq 1]} \dfrac{1}{(1 + |x|)^{r\rho} }dx\right)^{\dfrac{1}{r}}\left( \int_{\left[\mathbb{R}^N \setminus \Omega\right] \cap [|u_k| \leq 1]} |u_k|^{p^*_s}dx\right)^{\frac{p}{p^*_s}}.
\end{equation}
Now, by using the Gagliardo-Nirenberg inequality and the Co-area Theorem, we obtain that
\begin{equation}\nonumber \displaystyle \int_{\left[\mathbb{R}^N \setminus \Omega\right] \cap [|u_k| \leq 1]} \dfrac{|u_k|^p}{(1 + |x|)^{\rho} } dx \leq \tilde{C} [u_k]^p  \int_{R}^{\infty} \ell^{N - 1 - \rho r} d\ell = \tilde{C}[u_k]^p  \dfrac{1}{R^{\rho r - N} (\rho r - N)} = C_R [u_k]^p. 
\end{equation}
Here we observe that was used the inequality $\rho > N/r$ in order to ensure that $C_R$ is finite. Now, taking into account that $[u_k] \to 0$, the last estimate together with \eqref{ak}, \eqref{bkk} and \eqref{ck}, imply that
$1 = \|u_k\|_p^p \to 0$ as $k \to \infty$. This is a contradiction which shows that $\sigma > 0$.
\end{exm}

Now, we consider an asymptotically periodic potential that vanishes on the set $W_a = \{ x \in \mathbb{R}^N: |x| = a\}$ which corresponds to the boundary of the set $B_a(0) = \{ x \in \mathbb{R}^N: |x| < a\}$. The last set is the open ball centered at zero with radius $a$. 
\begin{exm} Let us fix $a > 0$ and $\rho > p^*_s/(p^*_s -p)$. Consider $V: \mathbb{R}^N \mapsto \mathbb{R}$ defined as follows:
$$V(x) = \left\{\begin{array}{lll}
    1 - \left(\dfrac{a}{|x|}\right)^{\rho},\ |x| > a,\\
    0,\ |x| = a,\\
    \dfrac{-1}{2a} |x| + \dfrac{1}{2}, 0 \leq |x| < a.
\end{array}\right.$$
 Hence, $V \in C^0(\mathbb{R}^N, \mathbb{R})$, $V(x) \geq 0, x \in \mathbb{R}^N$. Furthermore, $V(x) = 0$ for all $x \in W_a = \{x \in \mathbb{R}^N: |x| = a\}$. Now, we claim that $(V_3)$ is satisfied. The proof follows arguing by contradiction. In fact, using the same ideas employed in the previous example, we assume that there exists a sequence $(u_k) \subset X$ such that $\|u_k\|_p^p = 1$ and $\|u_k\| \to 0$ as $k \to \infty$. Therefore, we obtain that
$$1 = \|u_k\|^p_p = \int_{\mathbb{R}^N}V(x) |u_k|^p dx + \int_{\mathbb{R}^N}[1 - V(x)]|u_k|^p dx = o_k(1) + \int_{\mathbb{R}^N}[1 - V(x)]|u_k|^p dx.$$
Our goal is to ensure that the last integral in right-hand side for the expression just above converges to zero as $k \to \infty$. In order to do that we write
$$\int_{\mathbb{R}^N} [1 - V(x)]|u_k|^p dx = \int_{B_a(0)} \left[\dfrac{-|x|}{2a} + \dfrac{1}{2}\right]|u_k|^p dx + \int_{\mathbb{R}^N \setminus B_a(0)}\dfrac{a^{\rho}|u_k|^p}{|x|^{\rho}}dx.$$
Notice that for the set $W_a$ the Poincaré inequality implies that
$$\int_{B_a(0)}\left[\dfrac{-|x|}{2a} + \dfrac{1}{2}\right]|u_k|^p dx = o_k(1).$$
Furthermore, by using Hölder's inequality together with the Co-area Theorem as was done in Example \ref{120}, we deduce that $$\int_{\mathbb{R}^N \setminus B_a(0)}\dfrac{a^{\rho}|u_k|^p}{|x|^{\rho}}dx \leq C[u_k]^p = o_k(1).$$
Under these conditions, we obtain that that $1 = \|u_k\|_p^p \to 0$. Hence, we obtain a contradiction proving that $\sigma = \sigma(V) > 0$.
\end{exm}

 In our next example, we shall exhibit an asymptotically periodic potential $V: \mathbb{R}^N \mapsto \mathbb{R}$ that vanishes on a set of positive Lebesgue measure. Furthermore, the potential $V$ given just below satisfies hypothesis $(V_3)$.
 \begin{exm} Consider $V:\mathbb{R}^N \mapsto \mathbb{R}$ given by $V(x) = 1 - g(x), x \in \mathbb{R}^N$ where
 $$g(x) = \left\{\begin{array}{lll} 1,\ |x| \leq 1,\\
 \dfrac{6^{\rho}}{(1 + |x|)^{\rho}(1 + 2|x|)^{\rho}},\ |x| > 1.
 \end{array}\right.$$
Notice that $V^{-1}(0) = \overline{B_1(0)}$. Furthermore, we see that $V \in C^0(\mathbb{R}^N, \mathbb{R})$. Once again we claim that $(V_3)$ is verified. The proof follows by contradiction. Let us assume that there exists a sequence $(u_k) \subset X$ such that $\|u_k\|_p^p = 1$ and $\|u_k\|^p \to 0$ as $k \to \infty$. Moreover, we mention that
$$\displaystyle \int_{\mathbb{R}^N}|u_k|^p dx = \int_{\mathbb{R}^N}[1 - g(x)]|u_k|^p + \int_{\mathbb{R}^N}g(x)|u_k|^p dx = o_k(1) + \int_{\mathbb{R}^N}g(x)|u_k|^p dx. $$
Under these conditions, we shall analyze the last integral on the right-hand side for the expression given just above. Recall that for the set $x \in \overline{B_1(0)}$ 
we can apply the Poincaré inequality. Hence, we obtain
$$\int_{B_1(0)}g(x)|u_k|^p dx = \int_{B_1(0)}|u_k|^p dx \leq C [u_k]^p_{\Omega} \leq C[u_k]^p.$$
Once again the last intergral goes to zero as $k \to \infty$. It remains to consider the integral outside the set $\overline{B_1(0)}$. Let us define $$C_k = \int_{[\mathbb{R}^N\setminus B_1(0)] \cap |u_k| \geq 1}g(x)|u_k|^p dx\ \hbox{and}\  D_k = \int_{[\mathbb{R}^N\setminus B_1(0)] \cap |u_k| \leq 1}g(x)|u_k|^p dx$$
Since the function $g$ is bounded from above and taking into acount that $p < p^*_s$, we infer that
$$C_k \leq \int_{[\mathbb{R}^N\setminus B_1(0)] \cap |u_k| \geq 1}|u_k|^p dx \leq C\int_{\mathbb{R}^N}|u_k|^{p^*_s} dx \leq \tilde{C} [u_k]^{p^*_s}.$$
Hence, using once more that $[u_k] \to 0$, we obtain that $C_k \to 0$ as $k \to \infty$. On the other hand, by using Hölder's inequality with $r=p^*_s/(p^*_s - p)$ e $p^*_s/p$, we check that
$$D_k\leq \int_{[\mathbb{R}^N\setminus B_1(0)] \cap |u_k| \leq 1} \dfrac{6^{\rho}|u_k|^p}{(1 + |x|)^{\rho}} dx \leq C \left(\int_{\left[\mathbb{R}^N \setminus \Omega\right] \cap [|u_k| \leq 1]} \dfrac{1}{(1 + |x|)^{r\rho} }dx\right)^{\dfrac{1}{r}}\left( \int_{\left[\mathbb{R}^N \setminus \Omega\right] \cap [|u_k| \leq 1]} |u_k|^{p^*_s}dx\right)^{\frac{p}{p^*_s}}.$$
Thus, by using the Co-area Theorem and the fact that $[u_k] \to 0$ as $k \to \infty$, we can argue in the same way as was done in Example \ref{120} proving that $D_k \to 0$ as $k \to \infty$. Hence, we obtain that
$1 = \|u_k\|_p^p \to 0$ as $k \to \infty$. The last statement does not make sense proving that $\sigma > 0$.
\end{exm}

   Now, we consider the sphere of radius $m$ in space $L^p(\mathbb{R}^N)$ as follows:
	\begin{equation}\label{S} S_m = \left\{u \in X: \|u\|_p^p = m^p \right\}.\end{equation}
	Our main objective is to find the existence of minimizers for the functional $J: X \rightarrow \mathbb{R}$ restricted to the set $S_m$, that is, we ensure the existence of $u \in S_m$ that satisfies
	\begin{equation}\label{minimization}
		\gamma_m = \inf\{J(w): w \in S_m\} = J(u),
	\end{equation}
	where the energy functional $J: X \to \mathbb{R}$ is given by
	$$J(u) = \dfrac{1}{p}[u]^p + \dfrac{1}{p}\int_{\mathbb{R}^N}V(x)|u|^p dx - \dfrac{\beta}{q}\int_{\mathbb{R}^N}|u|^q dx,\ \  \forall u\in X.$$
	It is worthwhile to mention that $J$ is in $C^1$ class restrict to $S_m$. Furthermore, given a minimizer $u \in S_m$ for the minimization Problem \eqref{minimization}, we obtain that $u$ is a weak solution to the Problem \eqref{P}. The main idea in the last assertion is to apply the Lagrange Multiplier Theorem. 
	
	Now, we shall state our main results. Firstly, for the $L^p$-subcritical case with periodic potential, we consider the following result:
	
	\begin{theorem}[$L^p$-subcritical case, periodic potential]\label{TB1} Suppose that $q \in (p, p + sp^2/N)$, $(V_1)$ and $\beta > 0$.  Then, for every $m > 0$, there exists $\delta=\delta(m) > 0$ such that if $\|V\|_{\infty} < \delta$, we obtain that the Problem \eqref{P} has at least one positive weak solution $u \in S_m$ satisfying $J(u) = \gamma_{m} < 0$. 
 Furthermore, we also obtain that $u \in L^{\infty}(\mathbb{R}^N) \cap C_{loc}^{0,\alpha}(\mathbb{R}^N)$ holds true for some $\alpha \in (0,1)$.
	\end{theorem}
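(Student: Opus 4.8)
The plan is to realize the level $\gamma_m$ in \eqref{minimization} as a minimum by the direct method; the only genuine difficulty is the lack of compactness of $X\hookrightarrow L^q(\mathbb{R}^N)$, and the periodicity $(V_1)$ together with the $L^p$-subcriticality $q<\overline{p}_s$ are exactly what overcome it.

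\textbf{Geometry of $J$ on $S_m$.} Interpolating the fractional Gagliardo--Nirenberg inequality (cf.\ Proposition \ref{gagl}) gives $\|u\|_q^q\le C[u]^{\theta q}\|u\|_p^{(1-\theta)q}$ with $\theta=\frac{N(q-p)}{spq}$, and the assumption $q<\overline{p}_s$ is precisely $\theta q<p$. Hence for $u\in S_m$,
\begin{equation*}
J(u)\ \ge\ \tfrac1p[u]^p-\tfrac{\beta C}{q}\,m^{(1-\theta)q}[u]^{\theta q},
\end{equation*}
so $J$ is bounded below on $S_m$, every minimizing sequence has $[u_n]$ bounded, and since $\int_{\mathbb{R}^N}V|u_n|^p\,dx\le\|V\|_\infty m^p$ it is bounded in $X$. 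To get $\gamma_m<0$, fix $u_0\in S_m$ and dilate, $u_{0,t}(x):=t^{N/p}u_0(tx)\in S_m$; then $[u_{0,t}]^p=t^{sp}[u_0]^p$ and $\|u_{0,t}\|_q^q=t^{N(q-p)/p}\|u_0\|_q^q$, and since $N(q-p)/p<sp$ the potential-free part $\tfrac1p t^{sp}[u_0]^p-\tfrac{\beta}{q}t^{N(q-p)/p}\|u_0\|_q^q$ is strictly negative for a suitable $t_0$, while $\int V|u_{0,t_0}|^p\,dx\le\|V\|_\infty m^p$. Choosing $\delta(m)>0$ small enough that this absorption works for every radius $m'$ with $(m')^p\ge m^p/2$ (possible because for the potential-free functional the quantity $-\gamma^0_{m'}/(m')^p$ is nondecreasing in $m'$) yields $\gamma_m<0$ whenever $\|V\|_\infty<\delta(m)$, and likewise $\gamma_{m'}<0$ for all such $m'$.

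\textbf{A nontrivial weak limit via lattice translations.} Let $(u_n)\subset S_m$ be minimizing. By the fractional Lions concentration lemma, if $\sup_{y\in\mathbb{R}^N}\int_{B_1(y)}|u_n|^p\,dx\to0$ then $u_n\to0$ in $L^q(\mathbb{R}^N)$ (here $p<q<p^*_s$), whence $\liminf J(u_n)\ge0$, contradicting $\gamma_m<0$; so there are $\eta>0$ and $y_n\in\mathbb{R}^N$ with $\int_{B_1(y_n)}|u_n|^p\,dx\ge\eta$. Let $z_n\in\mathbb{Z}^N$ be a closest lattice point to $y_n$. By $1$-periodicity of $V$, the sequence $\tilde u_n:=u_n(\cdot+z_n)$ again lies in $S_m$ and is again minimizing (all four terms of $J$ are invariant under this translation, the potential term because $z_n\in\mathbb{Z}^N$), and $\int_{B_{R_0}(0)}|\tilde u_n|^p\,dx\ge\eta$ for a fixed $R_0$. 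Passing to a subsequence, $\tilde u_n\rightharpoonup\tilde u$ in $X$; the compact embedding $X\hookrightarrow L^p(B_{R_0}(0))$ gives $\int_{B_{R_0}(0)}|\tilde u|^p\,dx\ge\eta$, so $\tilde u\neq0$.

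\textbf{No dichotomy: $\tilde u\in S_m$.} Put $v_n:=\tilde u_n-\tilde u\rightharpoonup0$ and $m_1^p:=\|\tilde u\|_p^p\in(0,m^p]$. The Brezis--Lieb lemma applied to $[\cdot]^p$, to $\int_{\mathbb{R}^N}V|\cdot|^p\,dx$ (admissible since $V\in L^\infty$ and $\tilde u_n\to\tilde u$ a.e.) and to $\|\cdot\|_q^q$ yields
\begin{equation*}
J(\tilde u_n)=J(\tilde u)+J(v_n)+o(1),\qquad m^p=m_1^p+\|v_n\|_p^p+o(1),
\end{equation*}
so $J(v_n)\to\gamma_m-J(\tilde u)$ and $\|v_n\|_p^p\to\mu^p:=m^p-m_1^p$. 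Rescaling $v_n$ onto $S_\mu$ changes $J$ by $o(1)$, hence $\gamma_m-J(\tilde u)\ge\gamma_\mu$, and since $\tilde u\in S_{m_1}$ we obtain $\gamma_m\ge\gamma_{m_1}+\gamma_\mu$. On the other hand, for $c\ge1$ one has $J(cu)=c^p\bigl(\tfrac1p[u]^p+\tfrac1p\int V|u|^p\bigr)-c^q\tfrac{\beta}{q}\|u\|_q^q\le c^pJ(u)$, so $m\mapsto\gamma_m/m^p$ is nonincreasing; moreover along a minimizing sequence for a negative level $\gamma_b<0$ the quantity $\|\cdot\|_q^q$ is bounded below (because $J\ge-\tfrac{\beta}{q}\|\cdot\|_q^q$), which upgrades the above to the strict scaling $\gamma_{cb}<c^p\gamma_b$ for $c>1$. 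Combining these gives the strict subadditivity $\gamma_m<\gamma_{m_1}+\gamma_\mu$ for every splitting $0<m_1<m$ (it suffices that $\gamma_b<0$ for the larger piece $b^p=\max(m_1^p,\mu^p)\ge m^p/2$, guaranteed by the choice of $\delta(m)$). This contradicts $\gamma_m\ge\gamma_{m_1}+\gamma_\mu$ unless $m_1=m$, i.e.\ $\tilde u\in S_m$. Then $\|v_n\|_p\to0$, hence $\|v_n\|_q\to0$ by Gagliardo--Nirenberg, hence $\liminf J(v_n)\ge0$, which forces $J(\tilde u)=\gamma_m$ and $[v_n]^p+\int V|v_n|^p\to0$, i.e.\ $\tilde u_n\to\tilde u$ in $X$. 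Since $J\in C^1$ and $S_m$ is a $C^1$-manifold, the Lagrange multiplier theorem provides $\lambda\in\mathbb{R}$ with $J'(\tilde u)=\lambda\,(\tfrac1p\|\cdot\|_p^p)'(\tilde u)$, i.e.\ $\tilde u$ solves \eqref{P} weakly.

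\textbf{Sign, regularity, and the main obstacle.} Replacing $\tilde u$ by $|\tilde u|$ (which does not increase $[\cdot]$ and leaves $\|\cdot\|_p$, $\int V|\cdot|^p$, $\|\cdot\|_q$ unchanged) we may assume $\tilde u\ge0$, $\tilde u\not\equiv0$. A De Giorgi/Moser iteration for $(-\Delta)^s_p$, using that the right-hand side has subcritical growth ($q<p^*_s$), gives $\tilde u\in L^\infty(\mathbb{R}^N)$, and the interior Hölder estimates for the fractional $p$-Laplacian with bounded data then give $\tilde u\in C^{0,\alpha}_{loc}(\mathbb{R}^N)$ for some $\alpha\in(0,1)$. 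Writing the equation as $(-\Delta)^s_p\tilde u+(V(x)-\lambda)\tilde u^{\,p-1}=\beta\tilde u^{\,q-1}\ge0$ with $V-\lambda\in L^\infty$, the strong maximum principle for $(-\Delta)^s_p$ yields $\tilde u>0$ in $\mathbb{R}^N$. The heart of the argument is the third step: the two ways compactness can fail are both excluded — vanishing by $\gamma_m<0$, dichotomy by the strict subadditivity of $m\mapsto\gamma_m$ — and it is precisely the periodicity $(V_1)$ that permits the lattice translation in the second step keeping the sequence minimizing while it follows the concentration.
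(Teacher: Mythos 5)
Your proof is correct and follows essentially the same route as the paper: Gagliardo--Nirenberg for coercivity, dilation for $\gamma_m<0$, Lions' lemma plus $\mathbb{Z}^N$-translations (using $(V_1)$) for a nontrivial weak limit, strict subadditivity of $m\mapsto\gamma_m$ via the scaling $u\mapsto cu$ to exclude dichotomy (this is exactly the paper's Lemma \ref{important}), then Brezis--Lieb, strong convergence, Lagrange multipliers, and the standard regularity/positivity arguments. The only notable difference is that you make explicit that $\delta(m)$ must be chosen so that $\gamma_{m'}<0$ also holds for the sub-masses $m'$ with $(m')^p\ge m^p/2$ arising in the dichotomy step, a uniformity that the paper's Lemmas \ref{neg} and \ref{important} leave implicit.
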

	
	Now, by using hypothesis $(V_2)$, we can consider the existence of local minimizers $u \in S_m$ for
	asymptotically periodic potentials. More precisely, we are able to consider the following result;
	\begin{theorem}[$L^p$-subcritical case, asymptotically periodic potential]\label{TB2} Suppose $q \in (p, p + sp^2 / N)$, $\beta > 0$, $(V_2)$ and $(V_3)$. Then, for every $m > 0$ there exists $\delta=\delta(m) > 0$ such that if $\|V_{\theta}\|_{\infty} < \delta $, the Problem \eqref{P} has at least one positive weak solution $u \in S_m$ such that $J(u) = \gamma_{m} < 0$. Furthermore, we also obtain that $u \in L^{\infty}(\mathbb{R}^N) \cap C_{loc}^{0,\alpha}(\mathbb{R}^N)$ holds true for some $\alpha \in (0,1)$.
	\end{theorem}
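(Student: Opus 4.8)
The plan is to realize the claimed solution as a global minimizer of $J$ on the constraint $S_m$ from \eqref{S} and to recover the compactness lost on $\Rn$ by comparing with the $1$-periodic problem carried by $V_{\theta}$. Write $J_{\theta}$ for the functional obtained from $J$ by replacing $V$ with $V_{\theta}$, and set $\gamma_a:=\inf\{J(w):w\in S_a\}$, $\gamma_a^{\theta}:=\inf\{J_{\theta}(w):w\in S_a\}$. First I would check that $J$ is bounded from below and coercive on $S_m$. By the $L^t$-form of the fractional Gagliardo--Nirenberg inequality underlying Proposition~\ref{gagl} one has $\|u\|_q^q\le C[u]^{qa}\|u\|_p^{q(1-a)}$ with $qa=N(q-p)/(sp)$, and the assumption $q<\overline{p}_s=p+sp^2/N$ forces $qa<p$; hence for $u\in S_m$
$$J(u)\ \ge\ \frac1p[u]^p-\frac{\beta C}{q}\,m^{q(1-a)}[u]^{qa},$$
which is coercive in $[u]$, so that, using $(V_3)$, the norm equivalence $\|\cdot\|\simeq\|\cdot\|_{*}$ of Proposition~\ref{ob11} and the reflexivity of $X$ (Proposition~\ref{reflexive}), every minimizing sequence for $\gamma_m$ is bounded in $X$, hence in $W^{s,p}(\Rn)$, and has a weakly convergent subsequence. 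Next I would show $\gamma_m<0$ by evaluating $J$ on the dilations $\phi_t(x)=t^{N/p}\phi(tx)$, where $\phi$ is fixed with $\|\phi\|_p^p=m^p$: then $[\phi_t]^p=t^{sp}[\phi]^p$, $\|\phi_t\|_q^q=t^{N(q/p-1)}\|\phi\|_q^q$ and $\int_{\Rn}V|\phi_t|^p\,dx\le\|V\|_{\infty}m^p\le\|V_{\theta}\|_{\infty}m^p$; since $N(q/p-1)<sp$, for small $t$ the $L^q$-term dominates the seminorm term, and provided $\|V_{\theta}\|_{\infty}<\delta(m)$---which is exactly how $\delta(m)$ is fixed---the potential contribution is absorbed, giving $\gamma_m\le J(\phi_t)<0$. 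The same scaling, now $u\mapsto\theta u$ with $\theta>1$, yields the strict estimate $\gamma_{\theta m}<\theta^{p}\gamma_m$ (only $\gamma_m<0$ is needed, since then a minimizing sequence for $\gamma_m$ has $\|u\|_q^q\ge -\tfrac{q}{\beta}J(u)$ bounded away from $0$); a short case analysis on the signs of $\gamma_\mu,\gamma_\nu$ then upgrades this to the strict subadditivity
$$\gamma_{(\mu^p+\nu^p)^{1/p}}\ <\ \gamma_{\mu}+\gamma_{\nu}\qquad\text{for all }\mu,\nu>0,$$
the ingredient that will exclude dichotomy.

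For the periodic comparison I would shrink $\delta(m)$ if necessary so that Theorem~\ref{TB1} applies to the $1$-periodic potential $V_{\theta}$; it provides a positive minimizer $w_{\theta}\in S_m$ of $J_{\theta}$ with $J_{\theta}(w_{\theta})=\gamma_m^{\theta}<0$. Since $0\le V\le V_{\theta}$, $V\not\equiv V_{\theta}$ and $w_{\theta}>0$ a.e.,
$$\gamma_m\ \le\ J(w_{\theta})\ =\ J_{\theta}(w_{\theta})-\frac1p\int_{\Rn}(V_{\theta}-V)|w_{\theta}|^p\,dx\ <\ \gamma_m^{\theta},$$
the last inequality being strict because the integral is strictly positive. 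This gain $\gamma_m<\gamma_m^{\theta}$ is what will prevent a minimizing sequence from escaping to spatial infinity, where $V$ degenerates to the periodic potential $V_{\theta}$.

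Now let $(u_n)\subset S_m$ be minimizing with $u_n\rightharpoonup u$ in $X$, $u_n\to u$ a.e.\ and in $L^t_{loc}(\Rn)$ for $t<p^*_s$. The technical core is the property used in \cite{Elves,Marc1,elv3}: if $f\in\mathcal F$ and $w_n\rightharpoonup w$ in $W^{s,p}(\Rn)$, then $\int_{\Rn}f|w_n|^p\,dx\to\int_{\Rn}f|w|^p\,dx$; one splits $\Rn$ into $\{|f|<\varepsilon\}$, where the contribution is $\le C\varepsilon$ uniformly in $n$, and the finite-measure set $\{|f|\ge\varepsilon\}$, on which a.e.\ convergence, the bound $\|f\|_{\infty}$ and the uniform $L^{p^*_s}$-bound give convergence by Vitali's theorem. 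Applying this with $f=V-V_{\theta}\in\mathcal F$ gives $J(u_n)=J_{\theta}(u_n)+\frac1p\int_{\Rn}(V-V_{\theta})|u_n|^p\,dx$, the last term converging to $\frac1p\int_{\Rn}(V-V_{\theta})|u|^p\,dx$. If $u=0$ this term tends to $0$, so $\gamma_m=\lim_nJ_{\theta}(u_n)\ge\gamma_m^{\theta}$ (each $u_n\in S_m$), contradicting the previous step; hence $u\neq0$, say $\|u\|_p=\mu\in(0,m]$. To see $\mu=m$, put $v_n=u_n-u\rightharpoonup0$, so $\|v_n\|_p^p\to\nu^p:=m^p-\mu^p$, and use the Brezis--Lieb lemma for $[\cdot]^p$, for $\int_{\Rn}V_{\theta}|\cdot|^p\,dx$ and for $\|\cdot\|_q^q$, together with the vanishing of the $(V-V_{\theta})$-term along $v_n$, to obtain $J(u_n)=J(u)+J_{\theta}(v_n)+o(1)$; renormalizing $v_n$ onto $S_{\nu}$ gives $\gamma_m\ge J(u)+\gamma_{\nu}^{\theta}\ge\gamma_{\mu}+\gamma_{\nu}^{\theta}\ge\gamma_{\mu}+\gamma_{\nu}$. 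If $0<\nu<m$ this contradicts the strict subadditivity above, so $\nu=0$, i.e.\ $u\in S_m$; then $\gamma_m=J(u)+\lim_nJ_{\theta}(v_n)\ge J(u)\ge\gamma_m$ forces $J(u)=\gamma_m$ and $\lim_nJ_{\theta}(v_n)=0$, and since $\|v_n\|_p\to0$ makes $\|v_n\|_q$ and $\int_{\Rn}V_{\theta}|v_n|^p\,dx$ tend to $0$, it forces $[v_n]\to0$; thus $u_n\to u$ strongly in $X$ and $u$ attains $\gamma_m<0$.

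It remains to make $u$ a positive, regular weak solution. As $[|u|]\le[u]$ while $\int_{\Rn}V(x)|u|^p\,dx$ and $\|u\|_q$ are unchanged when $u$ is replaced by $|u|$, the function $|u|$ is again a minimizer on $S_m$, so we may take $u\ge0$; since $J$ and $u\mapsto\|u\|_p^p$ are $C^1$ on $X$, the Lagrange multiplier theorem produces $\lambda\in\R$ with $(-\Delta)^s_pu+V(x)u^{p-1}=\lambda u^{p-1}+\beta u^{q-1}$ weakly, and the strong maximum principle / Harnack inequality for $(-\Delta)^s_p$ then give $u>0$ on $\Rn$. Finally, the right-hand side being controlled by $C(u^{p-1}+u^{q-1})$ with $q-1<p^*_s-1$, a De Giorgi--Moser iteration adapted to the fractional $p$-Laplacian gives $u\in L^{\infty}(\Rn)$, and the interior Hölder estimates for $(-\Delta)^s_p$ with bounded data then give $u\in C^{0,\alpha}_{loc}(\Rn)$ for some $\alpha\in(0,1)$. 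I expect the compactness analysis to be the main obstacle: it is the interplay of the gain $\gamma_m<\gamma_m^{\theta}$, the strict subadditivity coming from the scaling estimate, and the $\mathcal F$-transfer of energy to the periodic problem (carried out uniformly in $n$) that ultimately forces the minimizing sequence to converge rather than to split or to run off to infinity.
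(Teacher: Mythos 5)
Your proposal is correct and follows essentially the same route as the paper: boundedness and negativity of $\gamma_m$ via Gagliardo--Nirenberg and dilations (Proposition \ref{lim}, Lemma \ref{neg}), the strict comparison $\gamma_m<\gamma_{m,\theta}$ obtained from the periodic minimizer of Theorem \ref{TB1} (Remark \ref{R1}), the $\mathcal F$-class convergence of $\int(V-V_\theta)|u_k|^p\,dx$ to rule out vanishing (Propositions \ref{PB2}--\ref{PB4}, Lemma \ref{L4}), Brezis--Lieb plus the scaling inequality $\gamma_{\theta m}<\theta^p\gamma_m$ to rule out dichotomy (Lemmas \ref{important}, \ref{6}, \ref{7}), and then Lagrange multipliers, the maximum principle and Moser iteration for positivity and regularity. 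The only cosmetic differences are that you state the $\mathcal F$-lemma for a general weak limit rather than only for $u_k\rightharpoonup0$, and you package the splitting exclusion as strict subadditivity $\gamma_{(\mu^p+\nu^p)^{1/p}}<\gamma_\mu+\gamma_\nu$ with the remainder measured by $J_\theta$, whereas the paper works directly with $\frac{m_1^p}{m_2^p}\gamma_{m_2}<\gamma_{m_1}$ and the functional $J$ itself.
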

	
	Now, motivated in part by \cite{Rab}, we consider a crucial hypothesis in order to prove our third main result. Namely, we consider the following assumption:
	\begin{itemize}
		\item[($V_4$)] $V \in C(\mathbb{R}^N) \cap L^{\infty}(\mathbb{R}^N)$ and $$V_{\infty} = \liminf_{|x|\to \infty} V (x) > V_0 = \inf_{x \in \mathbb{R}^N} V(x) > 0.$$
	\end{itemize}
	Furthermore, we define the functional $J_\varepsilon : X \longrightarrow \mathbb{R}$, given by
	\begin{equation}\label{jeps}
		J_{\varepsilon}(u) = \frac{1}{p}[u]^p + \frac{1}{p}\int_{\mathbb{R}^N} V(\varepsilon x)|u|^p dx - \frac{\beta}{q}\int_{\mathbb{R}^N} |u|^q dx.\end{equation}
	As a consequence, we consider the following minimization problem:
	\begin{equation}\label{gammameps} \gamma_{m, \varepsilon} = \inf\left\{J_{\varepsilon}(u): u \in S_m\right\}.
 \end{equation}
At this stage, we prove that there exists a solution for the Problem \eqref{gammameps}. The main idea is to employ the Lagrange Multiplier Theorem \cite{Dra}. In other words, we obtain a weak solution to the following nonlocal elliptic problem:
	\begin{equation}\label{Peps}\tag{$P_{m,\varepsilon}$}
		\left\{\begin{array}{cc}
			\displaystyle (-\Delta)^s_p u\ +\ V (\varepsilon x) |u|^{p-2}u\  = \lambda |u|^{p - 2}u + \beta\left|u\right|^{q-2}u\ \hbox{in}\ \mathbb{R}^N, \\
			\displaystyle \|u\|_p^p = m^p,\ u \in W^{s, p}(\mathbb{R}^N),\end{array}\right.
	\end{equation}
where $\varepsilon > 0$ is a small enough fixed parameter, $\beta > 0$ and  $\lambda > 0$.
	
	\begin{theorem}[$L^p$-subcritical case]\label{TB3} Assume that $q \in (p, p + sp^2/N)$, $\beta > 0, (V_3)$ and  $(V_4)$. Then, for every $m > 0$ there exists $\delta=\delta(m) > 0$ and $\varepsilon_0 > 0$  such that if $\|V\|_{\infty} < \delta$,  $\gamma_{m , \varepsilon} < 0$. Furthermore, there exists $u \in S_m,$ $u > 0$ such that $u$ is a weak solution for the Problem \eqref{Peps}, for all $\varepsilon \in (0, \varepsilon_0)$. Furthermore, we also obtain that $u \in L^{\infty}(\mathbb{R}^N) \cap C_{loc}^{0,\alpha}(\mathbb{R}^N)$ holds true for some $\alpha \in (0,1)$.
	\end{theorem}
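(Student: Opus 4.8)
The plan is to realize $u$ as a minimizer of $J_\varepsilon$ on $S_m$ at the level $\gamma_{m,\varepsilon}$ of \eqref{gammameps} by a concentration--compactness argument in which the gap $V_\infty>V_0$ of $(V_4)$ and the strict $L^p$-subcriticality $q<\overline{p}_s$ are the two decisive ingredients; the weak solution and its qualitative properties will then follow from the Lagrange multiplier theorem and the regularity theory for $(-\Delta)^s_p$. First I would record three facts. (i) From the Gagliardo--Nirenberg inequality of Proposition \ref{gagl}, for $u\in S_m$ one has $\|u\|_q^q\le C\,m^{(1-\theta)q}[u]^{\theta q}$ with $\theta q=N(q-p)/(sp)<p$, so $J_\varepsilon(u)\ge\frac1p[u]^p-\frac{C\beta}{q}m^{(1-\theta)q}[u]^{\theta q}$ is bounded below and coercive in $[u]$; since $(V_4)$ gives $V_0\le V(\varepsilon\,\cdot)\le\|V\|_\infty$, the norm \eqref{norma} is equivalent to $\|\cdot\|_{*}$ (Proposition \ref{ob11}), hence every minimizing sequence is bounded in $X=W^{s,p}(\mathbb{R}^N)$ and $\gamma_{m,\varepsilon}>-\infty$. (ii) With the $L^p$-preserving dilation $u_0^\tau(x)=\tau^{N/p}u_0(\tau x)$ one gets $J_\varepsilon(u_0^\tau)\le\frac{\tau^{sp}}{p}[u_0]^p+\frac{\|V\|_\infty}{p}m^p-\frac\beta q\tau^{N(q-p)/p}\|u_0\|_q^q$, and because $N(q-p)/p<sp$ the $\tau$-dependent part has a strictly negative minimum $-\eta_0(m)$; taking $\delta=\delta(m)$ with $\delta m^p<p\eta_0$ forces $\gamma_{m,\varepsilon}<0$ whenever $\|V\|_\infty<\delta$, for every $\varepsilon>0$. (iii) Setting $\kappa(\rho):=-\inf\{\frac1p[w]^p-\frac\beta q\|w\|_q^q:\ \|w\|_p^p=\rho\}$, the same scaling shows $\kappa(\rho)=\kappa_1\rho^\mu$ with $\kappa_1>0$ and $\mu=1+sp(q-p)/(sp^2-N(q-p))>1$ (this is precisely where $q<\overline{p}_s$ is used), so $\kappa$ is strictly convex with $\kappa(0)=0$, hence strictly superadditive; moreover, translating a compactly supported near-minimizer of the $V\equiv0$ problem on $S_m$ to a point where $V(\varepsilon\,\cdot)$ is near $V_0$ and letting $\varepsilon\to0$ gives $\gamma_{m,\varepsilon}\to\frac{V_0}{p}m^p-\kappa(m^p)$, while always $\gamma_{m,\varepsilon}\ge\frac{V_0}{p}m^p-\kappa(m^p)$.

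Next I would take a minimizing sequence $u_k\rightharpoonup u$ in $X$ (so $u_k\to u$ a.e.) and put $v_k=u_k-u\rightharpoonup0$. The fractional Br\'ezis--Lieb lemma, applied to $[\cdot]^p$, to $\|\cdot\|_p^p$ and $\|\cdot\|_q^q$, and to $\int V(\varepsilon\,\cdot)|\cdot|^p$ (legitimate since $V(\varepsilon\,\cdot)$ is bounded), gives $\gamma_{m,\varepsilon}=J_\varepsilon(u)+L$, where $L=\lim_k\big(\frac1p[v_k]^p+\frac1p\int V(\varepsilon x)|v_k|^p\,dx-\frac\beta q\|v_k\|_q^q\big)$ and $\|v_k\|_p^p\to\nu:=m^p-\|u\|_p^p$. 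Because $\liminf_{|x|\to\infty}V(\varepsilon x)=V_\infty$ and $v_k\to0$ in $L^p_{loc}$, for each $\eta>0$ the mass of $v_k$ eventually sits where $V(\varepsilon\,\cdot)\ge V_\infty-\eta$, so $\liminf_k\int V(\varepsilon x)|v_k|^p\,dx\ge V_\infty\nu$; combined with $\frac1p[v_k]^p-\frac\beta q\|v_k\|_q^q\ge-\kappa(\|v_k\|_p^p)\to-\kappa(\nu)$ this yields $L\ge\frac{V_\infty}{p}\nu-\kappa(\nu)$, and similarly $J_\varepsilon(u)\ge\frac{V_0}{p}\|u\|_p^p-\kappa(\|u\|_p^p)$. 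Inserting both into $\gamma_{m,\varepsilon}=J_\varepsilon(u)+L$ and using superadditivity of $\kappa$ together with $V_\infty>V_0$ gives $\gamma_{m,\varepsilon}\ge\frac{V_0}{p}m^p-\kappa(m^p)+\frac{V_\infty-V_0}{p}\nu$, i.e. $\nu\le C\rho(\varepsilon)$ with $\rho(\varepsilon):=\gamma_{m,\varepsilon}-\big(\frac{V_0}{p}m^p-\kappa(m^p)\big)\to0$.

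Then I would fix $\varepsilon_0>0$ so small that $C\rho(\varepsilon)<\min\{\nu_*,m^p\}$ for all $\varepsilon<\varepsilon_0$, where $\nu_*:=(V_\infty/(p\kappa_1))^{1/(\mu-1)}>0$; this already gives $\nu<m^p$, hence $u\not\equiv0$, and I claim $\nu=0$. If not, $0<\|u\|_p^p<m^p$, and the rescaling $\bar u:=(m^p/\|u\|_p^p)^{1/p}u\in S_m$ satisfies $J_\varepsilon(\bar u)<\frac{m^p}{\|u\|_p^p}J_\varepsilon(u)$ because $q>p$ and $\|u\|_q>0$. If $J_\varepsilon(u)<0$ this forces $\gamma_{m,\varepsilon}\le J_\varepsilon(\bar u)<J_\varepsilon(u)$, whence $L=\gamma_{m,\varepsilon}-J_\varepsilon(u)<0$; if $J_\varepsilon(u)\ge0$ then $L=\gamma_{m,\varepsilon}-J_\varepsilon(u)\le\gamma_{m,\varepsilon}<0$ directly. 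In either case $0>L\ge\frac{V_\infty}{p}\nu-\kappa_1\nu^\mu$ gives $\nu>\nu_*$, contradicting $\nu\le C\rho(\varepsilon)<\nu_*$. Thus $\nu=0$, so $u\in S_m$ and $u_k\to u$ in $L^p$; interpolating with $W^{s,p}\hookrightarrow L^{p^*_s}$ yields $u_k\to u$ in $L^q$, and then weak lower semicontinuity of $[\cdot]^p$ and continuity of the remaining terms give $J_\varepsilon(u)\le\liminf_k J_\varepsilon(u_k)=\gamma_{m,\varepsilon}$, i.e. $u$ minimizes $J_\varepsilon$ on $S_m$.

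Finally, since $[\,|u|\,]\le[u]$ and the other terms of $J_\varepsilon$ depend only on $|u|$, replacing $u$ by $|u|$ does not increase $J_\varepsilon$, so I may take $u\ge0$; the Lagrange multiplier theorem then produces $\lambda\in\mathbb{R}$ for which $u$ weakly solves \eqref{Peps}, the sign of $\lambda$ being read off by pairing the equation with $u$ and using $J_\varepsilon(u)=\gamma_{m,\varepsilon}<0$. As $u\ge0$, $u\not\equiv0$ solves $(-\Delta)^s_p u+(V(\varepsilon x)-\lambda)u^{p-1}=\beta u^{q-1}$, the strong maximum principle for $(-\Delta)^s_p$ gives $u>0$ in $\mathbb{R}^N$; and since $q<p^*_s$ the right-hand side has subcritical growth, so a De Giorgi--Nash--Moser iteration gives $u\in L^\infty(\mathbb{R}^N)$ and the H\"older regularity theory for the fractional $p$-Laplacian with bounded datum then yields $u\in C^{0,\alpha}_{loc}(\mathbb{R}^N)$ for some $\alpha\in(0,1)$. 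The hard part is the compactness in the two preceding paragraphs: excluding the drift of a positive fraction of $\|u_k\|_p^p$ toward the region where $V\approx V_\infty$. I expect this to require all three features at once --- the gap $V_\infty>V_0$ (making escaped mass expensive), the strict superadditivity of $\kappa$, equivalently $q<\overline{p}_s$ (preventing a splitting into several pieces), and $\varepsilon$ small (pushing $\gamma_{m,\varepsilon}$ down to the ``flat'' value $\frac{V_0}{p}m^p-\kappa(m^p)$) --- with the rescaling $\bar u=(m^p/\|u\|_p^p)^{1/p}u$ converting a partial loss of mass into the contradiction.
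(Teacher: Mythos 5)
Your argument is correct, but the compactness mechanism is genuinely different from the paper's. The paper splits the work into three qualitative steps: Proposition \ref{R2} and Lemma \ref{L6} apply Theorem \ref{TB1} to the constant potentials $V_0$ and $V_\infty$ and use the translation $u_0(\cdot-x_0/\varepsilon)$ to obtain $\gamma_{m,\varepsilon}\le\gamma_{m,0}<\gamma_{m,\infty}<0$ for small $\varepsilon$; Lemma \ref{L7} then rules out $u\equiv 0$, since a vanishing weak limit would force $\gamma_{m,\varepsilon}\ge\gamma_{m,\infty}$ up to an arbitrarily small error coming from $V(\varepsilon x)\ge V_\infty-\zeta$ outside a large ball; finally $u\in S_m$ and strong convergence come from the Br\'ezis--Lieb decomposition combined with the strict subadditivity $m_1^p\gamma_{m_2}/m_2^p<\gamma_{m_1}$ of Lemma \ref{important} (Lemmas \ref{6} and \ref{7}). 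You compress the last two steps into a single quantitative estimate: the exact scaling law $\kappa(\rho)=\kappa_1\rho^{\mu}$ with $\mu>1$ (this is where $q<p+sp^2/N$ enters, playing the role that Lemma \ref{important} plays in the paper), together with the gap $V_\infty>V_0$, bounds the escaped mass by $\nu\le p\,(\gamma_{m,\varepsilon}-\gamma_{m,0})/(V_\infty-V_0)$, and the dichotomy ``either $\nu=0$ or $\nu>\nu_*$'' that you extract from $L<0$ then forces $\nu=0$ once $\gamma_{m,\varepsilon}$ is close enough to $\gamma_{m,0}$ --- note that this dichotomy is essential and cannot be dropped, since for fixed $\varepsilon$ the bound $\nu\le C\rho(\varepsilon)$ alone only makes $\nu$ small, not zero. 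Your route is more self-contained (it needs neither the autonomous level $\gamma_{m,\infty}$ nor Theorem \ref{TB1} applied to constant potentials) and yields the extra information $\gamma_{m,\varepsilon}\to\gamma_{m,0}$ as $\varepsilon\to 0$; the paper's route avoids computing the exponent $\mu$ and the optimal Gagliardo--Nirenberg constant hidden in $\kappa_1$, at the price of channelling everything through the comparison levels. One small caution for your endgame: testing the Euler--Lagrange equation with $u$ gives $\lambda m^p=[u]^p+\int_{\mathbb{R}^N}V(\varepsilon x)|u|^p\,dx-\beta\|u\|_q^q$, and $J_\varepsilon(u)=\gamma_{m,\varepsilon}<0$ together with $p<q$ makes this quantity negative, so the multiplier satisfies $\lambda<0$; this does not affect the statement of Theorem \ref{TB3}, but it is inconsistent with the ``$\lambda>0$'' asserted after \eqref{Peps} in the paper, so you should not rely on that sign.
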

	In our last result we consider the case where the problem has a $L^p-$critical growth, that is, we assume that $q = p + sp^2/N$. Under this condition, by using a hypothesis introduced in \cite{bart}, we consider the following assumption:
	\begin{itemize}
		\item[($V_5$)] There holds $\mu\left(\{x \in \R^N : V(x) \leq M \}\right) < \infty$ for each $M > 0$.
	\end{itemize}
	\begin{theorem}[$L^p$-critical case]\label{TB4} 
		Suppose $q = p + sp^2/N$. Also assume that $(V_3)$ and $(V_5)$ and $V \geq 0$ are satisfied. Then, there exists $\beta_0>0 $ such that  the Problem \eqref{P} has at least one positive weak solution $u \in S_m$, for every $\beta \in (0, \beta_0)$ where $\beta_0 = \beta_0(N, s,p,m) > 0$.	Furthermore, we also obtain that $u \in L^{\infty}(\mathbb{R}^N) \cap C_{loc}^{0,\alpha}(\mathbb{R}^N)$ holds true for some $\alpha \in (0,1)$.
	\end{theorem}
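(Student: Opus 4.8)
The plan is to realize the positive normalized solution as a global minimizer of $J$ on $S_m$; this is feasible in the $L^p$-critical regime because the Gagliardo--Nirenberg term then scales exactly like the kinetic term, so that for $\beta$ small $J$ stays bounded below on $S_m$, while $(V_5)$ restores the compactness that would otherwise be lost. First I would record, via the fractional Gagliardo--Nirenberg inequality (Proposition~\ref{gagl} together with interpolation), a constant $C_{GN}=C_{GN}(N,s,p)>0$ with $\|u\|_q^q\le C_{GN}[u]^p\|u\|_p^{q-p}$ and $q-p=sp^2/N$, so that $\|u\|_q^q\le C_{GN}m^{sp^2/N}[u]^p$ on $S_m$. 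Since $V\ge0$, this gives, for $u\in S_m$,
\[
J(u)\ \ge\ \Bigl(\tfrac1p-\tfrac{\beta\,C_{GN}\,m^{sp^2/N}}{q}\Bigr)[u]^p+\tfrac1p\int_{\Rn}V(x)|u|^p\,dx .
\]
Setting $\beta_0=\beta_0(N,s,p,m):=\dfrac{q}{p\,C_{GN}\,m^{sp^2/N}}$, for every $\beta\in(0,\beta_0)$ the coefficient of $[u]^p$ is a positive constant $\kappa$, hence $J(u)\ge c\,\|u\|^p$ on $S_m$ with $c=\min\{\kappa,1/p\}>0$; together with $(V_3)$, which yields $\|u\|^p\ge\sigma m^p$ on $S_m$, this shows $\gamma_m\ge c\,\sigma\,m^p>0$, while $\gamma_m<\infty$ by evaluating $J$ on any fixed element of $S_m$.

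Next I would take a minimizing sequence $(u_n)\subset S_m$, $J(u_n)\to\gamma_m$. By the previous bound $(u_n)$ is bounded in $X$, which is reflexive (Proposition~\ref{reflexive}), so, along a subsequence, $u_n\rightharpoonup u$ in $X$ and in $W^{s,p}(\Rn)$. The decisive ingredient is that $(V_5)$ makes the embedding $X\hookrightarrow L^t(\Rn)$ compact for every $t\in[p,p^*_s)$: on a ball $B_R$ one uses the local compact embedding $W^{s,p}(B_R)\hookrightarrow\hookrightarrow L^t(B_R)$, and outside $B_R$ one splits the complement into $\{V>M\}$, on which $\int_{\{V>M\}}|u_n|^p\le M^{-1}\int_{\Rn}V|u_n|^p$ is uniformly small for $M$ large, and $\{V\le M\}\setminus B_R$, which has arbitrarily small Lebesgue measure (since $\mu(\{V\le M\})<\infty$) and is controlled via Hölder's inequality and the uniform bound on $\|u_n\|_{p^*_s}$. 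Since $q=\overline{p}_s<p^*_s$, applying this with $t=p$ and $t=q$ gives $u_n\to u$ in $L^p(\Rn)$ and in $L^q(\Rn)$; in particular $\|u\|_p^p=m^p$, so $u\in S_m$, and $\|u_n\|_q^q\to\|u\|_q^q$.

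Writing $J(v)=\tfrac1p\|v\|^p-\tfrac{\beta}{q}\|v\|_q^q$ and combining weak lower semicontinuity of $\|\cdot\|$ on $X$ with $\|u_n\|_q^q\to\|u\|_q^q$, I get $J(u)\le\liminf_n J(u_n)=\gamma_m$, hence $J(u)=\gamma_m$ since $u\in S_m$. Because $[\,|u|\,]\le[u]$ while the remaining terms of $J$ depend only on $|u|$, the function $|u|\in S_m$ is also a minimizer, so I may assume $u\ge0$, and $u\not\equiv0$ as $\|u\|_p=m>0$. Since $J\in C^1(S_m)$, the Lagrange Multiplier Theorem \cite{Dra} furnishes $\lambda\in\R$ for which $u$ is a weak solution of \eqref{P}; testing with $u$ identifies $\lambda m^p=\|u\|^p-\beta\|u\|_q^q$. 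Finally, $u\in L^\infty(\Rn)$ follows from a De Giorgi--Moser iteration for the fractional $p$-Laplacian (using $q\le p^*_s$), and then $u\in C^{0,\alpha}_{loc}(\Rn)$ for some $\alpha\in(0,1)$ from the Hölder regularity theory for $(-\Delta)^s_p$; since $u\ge0$, $u\not\equiv0$, and $u$ satisfies $(-\Delta)^s_p u+(V(x)-\lambda)|u|^{p-2}u=\beta|u|^{q-2}u\ge0$ with a locally bounded zeroth-order coefficient, the strong maximum principle for the fractional $p$-Laplacian gives $u>0$ in $\Rn$.

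The hard part is the second step: excluding escape of mass to infinity along the minimizing sequence, which is precisely what $(V_5)$ provides; the other classical difficulty in the $L^p$-critical case, concentration, is ruled out for free by the $[u]$-coercivity of the first step, available exactly because $\beta<\beta_0$.
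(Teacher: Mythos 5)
Your proposal is correct and follows essentially the same route as the paper: coercivity of $J$ on $S_m$ for $\beta<\beta_0$ via the fractional Gagliardo--Nirenberg inequality in the critical exponent (the paper's Lemma \ref{limit} and Remark \ref{ob1}), compactness of $X\hookrightarrow L^t(\mathbb{R}^N)$ from $(V_5)$ with the identical splitting into $\{V\ge M\}$ and $\{V\le M\}\setminus B_R$ (Proposition \ref{compa}), followed by weak lower semicontinuity plus strong $L^q$ convergence, the $[|u|]\le[u]$ trick, the Lagrange multiplier theorem, and the same regularity/maximum-principle conclusion.
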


	\subsection{Notation} In the present work we shall use the following notations:
	\begin{itemize}
		\item $K, K_1, \cdots,$ denotes the positive constants.
		\item  The norm in $L^t(\mathbb{R}^N)$ and $L^{\infty}(\mathbb{R}^N)$ will be denote by $\|\cdot\|_t, t \in [1, \infty]$.
		\item $\|\cdot\|$ denotes the norm of working space $X$.
		\item $[\ \cdot\ ]$ denotes the Gagliardo seminorm.
		\item The open ball in $\mathbb{R}^N$ centered at $x \in \mathbb{R}^N$ with radius $r > 0$ is denoted by $B_r(x)$.
		\item Given $A \subset \mathbb{R}^N$ we define $A^c = \{x \in \mathbb{R}^N: x \notin A\}$.
		\item Given any measurable set $A \subset \mathbb{R}^N$ the Lebesgue measure of $A$ is denoted by $\mu(A)$.
	\end{itemize}
	\subsection{Outline} 
	The remainder of this paper is organized as follows: In the forthcoming section we consider some preliminaries results involving the fractional $p$-Laplacian operator. In Section 3 we consider some results related to the functional $J$ proving the Theorem \ref{TB1} where $V$ is $1$-periodic. In Section 4 we
	consider some strong results proving Theorem \ref{TB2} where the potential $V$ is asymptotically periodic. In Section 5 we employ Theorems \ref{TB1} and \ref{TB2} in order to prove Theorem \ref{TB3}. The last section is devoted to the case $q = p + sp^2/N$ giving sufficient conditions for the existence of at least one nontrivial solution for the Problem \eqref{P}.

	\section{Preliminary Results}\label{SB1}
	Now, we shall consider the existence of critical points with prescribed norm for the functional $J: X \longrightarrow \mathbb{R}$ given by
	\begin{equation}\label{JB}J(u) = \frac{1}{p}[u]^p + \frac{1}{p}\int_{\mathbb{R}^N} V(x)|u|^p dx - \frac{\beta}{q}\int_{\mathbb{R}^N} |u|^q dx,\ \ \ u\in X.\end{equation}
	Here we emphasize once again that
	$$X = \left\{u \in W^{s,p}(\mathbb{R}^N): \int_{\mathbb{R}^N} V(x)|u|^p dx < \infty\right\}.$$  
Recall that the usual norm for the space $W^{s, p}(\mathbb{R}^N)$ is given by $$\|u\|_{*} = ([u]^p + \|u\|^p_p)^{\frac{1}{p}}, u \in  W^{s, p}(\mathbb{R}^N).$$
  For the next result we shall prove that the norm $\|\cdot \|$ and the usual norm in $X$ given just above are equivalents. Namely, we shall prove the following result:
	
\begin{proposition}\label{ob11} 
Suppose that $V \in L^{\infty}(\mathbb{R}^N), V \geq 0, V \not\equiv 0$. Assume also that $(V_3)$ is satisfied. Then the norm $\|\cdot\|$ is equivalent to the usual norm of the space $W^{s, p}(\mathbb{R}^N)$.
\end{proposition}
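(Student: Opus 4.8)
The plan is to prove the equivalence by establishing the two one‑sided estimates $C_1\|u\|_*\le\|u\|\le C_2\|u\|_*$ for all $u\in X$, using the boundedness of $V$ for one direction and hypothesis $(V_3)$ for the other. First I would record the trivial observation that, since $V\in L^{\infty}(\mathbb{R}^N)$ and $V\ge 0$, one has $\int_{\mathbb{R}^N}V(x)|u|^p\,dx\le\|V\|_{\infty}\|u\|_p^p$ for every $u\in W^{s,p}(\mathbb{R}^N)$; in particular $X=W^{s,p}(\mathbb{R}^N)$ as a set, so the two norms are genuinely defined on the same space. Raising the defining formula \eqref{norma} to the power $p$ then gives
$$\|u\|^p=[u]^p+\int_{\mathbb{R}^N}V(x)|u|^p\,dx\le [u]^p+\|V\|_{\infty}\|u\|_p^p\le\max\{1,\|V\|_{\infty}\}\,\|u\|_*^p,$$
which is the bound $\|u\|\le C_2\|u\|_*$ with $C_2=(\max\{1,\|V\|_{\infty}\})^{1/p}$.

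For the reverse inequality the key point is to exploit $(V_3)$ together with the positive $1$‑homogeneity of $u\mapsto[u]$ and of $u\mapsto\bigl(\int_{\mathbb{R}^N}V|u|^p\bigr)^{1/p}$, hence of $\|\cdot\|$: indeed $\|tu\|^p=|t|^p[u]^p+|t|^p\int_{\mathbb{R}^N}V|u|^p\,dx=|t|^p\|u\|^p$. Given any $u\in X\setminus\{0\}$, the function $v=u/\|u\|_p$ lies in $X$ and satisfies $\|v\|_p^p=1$, so $(V_3)$ yields $\|v\|^p\ge\sigma$, and unwinding the scaling gives $\|u\|^p\ge\sigma\|u\|_p^p$, i.e. $\|u\|_p^p\le\sigma^{-1}\|u\|^p$ for every $u\in X$ (the case $u=0$ being trivial). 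Consequently
$$\|u\|_*^p=[u]^p+\|u\|_p^p\le\|u\|^p+\sigma^{-1}\|u\|^p=(1+\sigma^{-1})\|u\|^p,$$
so $C_1\|u\|_*\le\|u\|$ with $C_1=(1+\sigma^{-1})^{-1/p}$. Combining this with the previous paragraph proves the claimed equivalence.

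For completeness I would also note that $\|\cdot\|$ is indeed a norm: subadditivity follows from Minkowski's inequality in $\mathbb{R}^2$ with the $\ell^p$‑norm applied to the pair of seminorms $\bigl([u],(\int_{\mathbb{R}^N}V|u|^p)^{1/p}\bigr)$, homogeneity is the computation above, and definiteness is automatic once the equivalence with $\|\cdot\|_*$ is in hand (alternatively $\|u\|=0$ forces $[u]=0$, hence $u$ is a.e. constant, hence $u=0$ since $u\in L^p(\mathbb{R}^N)$). There is no real obstacle in this proposition: the only substantive hypothesis actually used is $(V_3)$, which is precisely designed to furnish the Poincaré‑type control $\|u\|_p^p\le\sigma^{-1}\|u\|^p$ that would ordinarily come from a continuous embedding $X\hookrightarrow L^p(\mathbb{R}^N)$. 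Thus the content of the statement is just the translation of $(V_3)$ into an inequality between norms, and the one place to be careful is the homogeneity/scaling step that passes from the normalized infimum in $(V_3)$ to the inequality for an arbitrary $u\in X$.
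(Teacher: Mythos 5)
Your argument is correct and follows essentially the same route as the paper: the upper bound $\|u\|^p\le\max\{1,\|V\|_{\infty}\}\|u\|_*^p$ from $V\in L^{\infty}$, and the lower bound via the consequence $\|u\|_p^p\le\sigma^{-1}\|u\|^p$ of $(V_3)$, giving $\|u\|_*^p\le(1+\sigma^{-1})\|u\|^p$. The extra remarks you add (the explicit scaling step unwinding the normalization in $(V_3)$, the observation that $X=W^{s,p}(\mathbb{R}^N)$ as a set, and the verification that $\|\cdot\|$ is a norm) are sound and merely make explicit what the paper leaves implicit.
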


 \begin{proof}
 Firstly, by using the fact that $V \in L^{\infty}(\mathbb{R}^N)$, we obtain that 
 $$\|u\|^p \leq [u]^p + \|V\|_{\infty}\|u\|_p^p \leq c_1 \|u\|^p_{*},  u \in  W^{s, p}(\mathbb{R}^N)$$
 where $c_1 = \max\{1, \|V\|_{\infty}\}$. In virtue of hypothesis $(V_3)$ we infer also that $$\|u\|_p^p \leq \frac{1}{\sigma} \|u\|^p, u \in  W^{s, p}(\mathbb{R}^N).$$ As a consequence, we obtain that $$\|u\|^p_{*} = [u]^p + \|u\|_p^p \leq [u]^p + \frac{1}{\sigma} \|u\|^p \leq \|u\|^p + \frac{1}{\sigma} \|u\|^p = c_2 \|u\|^p,  u \in  W^{s, p}(\mathbb{R}^N)$$
 where $c_2 = 1 + 1/\sigma$. This completes the proof.
\end{proof}
\begin{proposition}\label{reflexive} Assume $(V_3)$ and $V \in L^{\infty}(\mathbb{R}^N), V \geq 0, V \not\equiv 0$. Then $X$ endowend with the norm $\|\cdot\|$ is reflexive.
\end{proposition}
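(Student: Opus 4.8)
The plan is to deduce reflexivity of $(X,\|\cdot\|)$ from the known reflexivity of the fractional Sobolev space $W^{s,p}(\mathbb{R}^N)$ together with Proposition \ref{ob11}, which already establishes that $\|\cdot\|$ and the usual norm $\|\cdot\|_*$ are equivalent on $X$. The key structural observation I would make first is that $X$, as a set, is a \emph{closed} subspace of $W^{s,p}(\mathbb{R}^N)$: if $(u_n)\subset X$ converges to $u$ in $W^{s,p}(\mathbb{R}^N)$, then passing to a subsequence we may assume $u_n\to u$ a.e.\ in $\mathbb{R}^N$, and Fatou's lemma applied to $V(x)|u_n|^p\to V(x)|u|^p$ gives $\int_{\mathbb{R}^N}V(x)|u|^p\,dx\le\liminf_n\int_{\mathbb{R}^N}V(x)|u_n|^p\,dx<\infty$, since the sequence $\|u_n\|$ is bounded (as $u_n$ converges in $\|\cdot\|_*$ and, by Proposition \ref{ob11}, also in $\|\cdot\|$). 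Hence $u\in X$, so $X$ is closed in $W^{s,p}(\mathbb{R}^N)$.

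Next I would recall the standard functional-analytic facts: a closed subspace of a reflexive Banach space is reflexive, and reflexivity is preserved under passage to an equivalent norm (equivalently, the identity map is a topological isomorphism, and reflexivity is a topological-linear invariant of a Banach space). Since $W^{s,p}(\mathbb{R}^N)$ equipped with $\|\cdot\|_*$ is reflexive for $1<p<\infty$ and $s\in(0,1)$ (see \cite{neza}), the closed subspace $(X,\|\cdot\|_*)$ is reflexive. By Proposition \ref{ob11}, the norm $\|\cdot\|$ is equivalent to $\|\cdot\|_*$ on $X$, so $(X,\|\cdot\|)$ is reflexive as well. Note that Proposition \ref{ob11} requires precisely the hypotheses $(V_3)$ and $V\in L^\infty(\mathbb{R}^N)$, $V\ge 0$, $V\not\equiv 0$, which are exactly the standing assumptions of this proposition, so there is no gap.

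The only genuinely delicate point — and the step I would present most carefully — is the closedness of $X$ inside $W^{s,p}(\mathbb{R}^N)$, because one must be sure that convergence in $W^{s,p}$ does yield an a.e.\ convergent subsequence and that Fatou's lemma applies to the (nonnegative) integrand $V(x)|u_n(x)|^p$; both are routine once stated in the right order. Everything else is a citation of textbook results. I would therefore structure the write-up as: (i) $X$ is a linear subspace of $W^{s,p}(\mathbb{R}^N)$ (immediate); (ii) $X$ is closed, via the Fatou argument above; (iii) closed subspaces of reflexive spaces are reflexive, so $(X,\|\cdot\|_*)$ is reflexive; (iv) invoke Proposition \ref{ob11} to transfer reflexivity to the equivalent norm $\|\cdot\|$. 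This keeps the proof short and makes clear where each hypothesis is used.
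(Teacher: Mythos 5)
Your proof is correct, but it takes a different route from the paper's. You treat $W^{s,p}(\mathbb{R}^N)$ as a known reflexive space (citing \cite{neza}), show $X$ is a closed subspace via Fatou's lemma, and then transfer reflexivity through the norm equivalence of Proposition \ref{ob11}. The paper instead gives a self-contained argument: it embeds $W^{s,p}(\mathbb{R}^N)$ into the product $E = L^p(\mathbb{R}^N)\times L^p(\mathbb{R}^N\times\mathbb{R}^N)$ via $T(u)=(u,v)$ with $v(x,y)=(u(x)-u(y))/|x-y|^{N/p+s}$, shows the image is closed, and deduces reflexivity from that of $E$ -- essentially reproving the reflexivity of $W^{s,p}$ rather than quoting it, and again using Proposition \ref{ob11} to pass to the norm $\|\cdot\|$. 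Your version is shorter and cleaner; the paper's buys self-containedness. One remark: since $V\in L^{\infty}(\mathbb{R}^N)$, one has $\int_{\mathbb{R}^N}V(x)|u|^p\,dx\le\|V\|_{\infty}\|u\|_p^p<\infty$ for \emph{every} $u\in W^{s,p}(\mathbb{R}^N)$, so $X=W^{s,p}(\mathbb{R}^N)$ as a set (the paper itself notes this), and your Fatou closedness argument, while valid and more robust (it would survive unbounded $V$), is not actually needed under the standing hypotheses.
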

\begin{proof}
Fisrstly, we consider the Banach space $E = L^p(\mathbb{R}^N) \times L^p(\mathbb{R}^N \times \mathbb{R}^N)$. Define the map $T: W^{s, p}(\mathbb{R}^N) \mapsto E$ in the folowing way that $T(u) = (u, v)$ where $$v = v(x, y) = \dfrac{u(x) - u(y)}{|x - y|^{\frac{N}{p} + s}}, x , y \in \mathbb{R}^N$$ for each function $u \in W^{s, p}(\mathbb{R}^N)$. 

It is easy to verify that $T$ is a well-defined linear map. Furthermore, we see that $T(W^{s, p}(\mathbb{R}^N))$ is closed using the norm $\|\cdot\|$. Indeed, consider a sequence $(u_k)$ in $W^{s, p}(\mathbb{R}^N)$ such that $\lim_{k \to \infty} T(u_k) = \lim_{k \to \infty}(u_k, v_k) = (f, g) \in E$.  Under these conditions, by using the fact that $u_k \to f$ in $L^p(\mathbb{R}^N)$, we obtain that $u_k \to f$ a. e. in $\mathbb{R}^N$. Similarly, $v_k \to g$ a. e. in $\mathbb{R}^N$. Therefore, we deduce that
$$g(x, y) = \lim_{k \to \infty} \dfrac{u_k(x) - u_k(y)}{|x - y|^{\frac{N}{p} + s}} = \dfrac{f(x) - f(y)}{|x - y|^{\frac{N}{p} + s}},\ \hbox{a. e. in}\ \mathbb{R}^N.$$
It follows that $(f, g) \in T(W^{s, p}(\mathbb{R}^N))$. This assertion proves that $T(W^{s,p}(\mathbb{R}^N))$ is closed with the usual norm. Moreover, the usual norm is equivalent to the norm $\|\cdot\|$, see Proposition \ref{ob11}. Hence, $T(W^{s,p}(\mathbb{R}^N))$ is a closed set using the norm $\|\cdot\|$.

Now, we define $\Phi = i \circ T: W^{s, p}(\mathbb{R}^N) \mapsto E$ where $i(T(u)) = i(u,v) = (u,v)$ for each $u \in W^{s, p}(\mathbb{R}^N)$. Here we look for $E$ using the usual norm. Therefore, by using the fact that $T(W^{s, p}(\mathbb{R}^N))$ is closed under the norm $\|\cdot\|$, it follows that $i\left(T(W^{s, p}(\mathbb{R}^N))\right) \subset E$ is closed. Here was used that $T$ is continuous. Recall also that $E$ is a reflexive Banach space. Hence, $i\left(T(W^{s, p}(\mathbb{R}^N))\right) = T(W^{s, p}(\mathbb{R}^N))$ is reflexive. Furthermore, $i^{-1}: i\left(T(W^{s, p}(\mathbb{R}^N))\right) \mapsto T(W^{s, p}(\mathbb{R}^N))$ is a linear and continuous map. Now, we are able to use  that $i\left(T(W^{s, p}(\mathbb{R}^N))\right)$ is reflexive. In fact, using the last assertion, we obtain that $i^{-1}\left(i\left(T(W^{s, p}(\mathbb{R}^N))\right)\right) = T(W^{s, p}(\mathbb{R}^N))$ is reflexive using the norm $\|\cdot\|$. This ends the proof.
\end{proof}
	Now, by using the interpolation law for $L^t(\mathbb{R}^N)$ we obtain that 
	\begin{equation}\label{pp}
		\|u\|^q_q \le K_1\|u\|_p^{(1-t)q} \|u\|^{tq}_{p^*_s},
	\end{equation} 
	for some $t \in (0, 1)$.
 
Hence, by using Proposition \ref{gagl} and \eqref{pp} we obtain that 
\begin{equation}\label{B1}
 \|u\|^q_q \leq C\|u\|_p^{(1-t)q}[u]^{tq},\ t = N\left(\frac{1}{sp} - \frac{1}{sq}\right).
 \end{equation}
The previous estimate will be used in order to control the behavior of the functional $J$ restricted to the sphere $S_m$.

	\section{$L^p -$ Subcritical Growth: The periodic case}\label{SecB1}
	
	In this section, as a first step in order to establish the existence of a critical point for the functional $J$ we shall prove that $J$ is bounded from below in $S_m$.

	\begin{proposition}\label{lim} Suppose $V \in L^{\infty}(\mathbb{R}^N)$, $V \geq 0$, $\beta > 0$. Assume also that $q \in (p, p + sp^2/N)$. Let $(u_k) \subset S_m$ be a minimizing sequence to the minimization problem given in \eqref{minimization}. Then, $(u_k)$ is a bounded sequence in $X$.    
	\end{proposition}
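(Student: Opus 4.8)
The plan is to show that the energy functional $J$ is coercive on $S_m$, which immediately gives boundedness of any minimizing sequence. Let $(u_k) \subset S_m$, so $\|u_k\|_p^p = m^p$ for all $k$. The essential ingredient is the Gagliardo--Nirenberg type estimate \eqref{B1}, which for $u \in S_m$ reads
$$\|u_k\|_q^q \leq C\, m^{(1-t)q}\, [u_k]^{tq}, \qquad t = N\left(\frac{1}{sp} - \frac{1}{sq}\right).$$
The key observation is that the $L^p$-subcritical restriction $q \in (p, p + sp^2/N)$ is precisely the condition that makes $tq < p$. Indeed, a direct computation gives $tq = N(q-p)/(sp)$, and the inequality $q < p + sp^2/N$ is equivalent to $N(q-p)/(sp) < p$, i.e. $tq < p$. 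This is the decisive point and the main thing to verify carefully.

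With $tq < p$ established, I would write, using $V \geq 0$,
$$J(u_k) = \frac{1}{p}[u_k]^p + \frac{1}{p}\int_{\mathbb{R}^N} V(x)|u_k|^p\,dx - \frac{\beta}{q}\int_{\mathbb{R}^N}|u_k|^q\,dx \geq \frac{1}{p}[u_k]^p - \frac{\beta C}{q}\, m^{(1-t)q}\, [u_k]^{tq}.$$
Since $(u_k)$ is a minimizing sequence for $\gamma_m = \inf_{S_m} J$, the values $J(u_k)$ are bounded above (by $\gamma_m + 1$, say, for $k$ large). Setting $a_k = [u_k]$, this yields $\tfrac{1}{p} a_k^p - C' a_k^{tq} \leq \gamma_m + 1$ with $C' = \beta C m^{(1-t)q}/q$. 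Because the exponent $p$ on the leading (positive) term strictly exceeds the exponent $tq$ on the subtracted term, the function $r \mapsto \tfrac{1}{p} r^p - C' r^{tq}$ tends to $+\infty$ as $r \to \infty$; hence the sublevel set $\{ r \geq 0 : \tfrac1p r^p - C' r^{tq} \leq \gamma_m + 1\}$ is bounded, which forces the sequence $([u_k])$ to be bounded, say $[u_k] \leq R$ for all $k$.

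Finally I would convert the bound on the Gagliardo seminorm together with the prescribed $L^p$ norm into a bound on $\|u_k\|$. Since $V \in L^\infty(\mathbb{R}^N)$,
$$\|u_k\|^p = [u_k]^p + \int_{\mathbb{R}^N} V(x)|u_k|^p\,dx \leq [u_k]^p + \|V\|_\infty \|u_k\|_p^p \leq R^p + \|V\|_\infty\, m^p,$$
so $(u_k)$ is bounded in $X$, as claimed. The only genuine obstacle is the exponent bookkeeping showing $tq < p$ under the subcritical hypothesis; everything else is a routine consequence of coercivity of a one-variable superlinear-minus-sublinear function.
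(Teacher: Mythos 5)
Your proposal is correct and follows essentially the same route as the paper: both rest on the Gagliardo--Nirenberg estimate \eqref{B1}, the observation that the subcritical condition $q<p+sp^2/N$ is equivalent to $tq<p$, and the resulting coercivity of $r\mapsto \tfrac1p r^p - C' r^{tq}$. The only cosmetic difference is that the paper phrases the final step as a contradiction with $\|u_k\|\to\infty$ (using $[u_k]\le\|u_k\|$), whereas you bound $[u_k]$ first and then recover the bound on $\|u_k\|$ from $V\in L^\infty$ and the constraint $\|u_k\|_p=m$; both are valid.
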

	
	\begin{proof} Initially, we shall prove that $J$ is bounded from below in $S_m$. In fact, for each $u \in S_m$ and \eqref{JB}, we observe that
		$$J(u) = \frac{1}{p}[u]^p + \frac{1}{p}\int_{\mathbb{R}^N} V(x)|u|^p dx - \frac{\beta}{q}\int_{\mathbb{R}^N} |u|^q dx.$$
		Now, by using the \eqref{B1} and the fact that $V$ is nonnegative potential, we obtain
		\begin{equation*} J(u) \geq \frac{1}{p}[u]^p - \frac{\beta C}{q} \|u\|_p^{(1 - t )q}[u]^{tq}.
		\end{equation*}
		Therefore, we obtain that
		\begin{equation}\label{2}
			J(u) \geq \frac{1}{p} [u]^p - \frac{\beta C}{q}m^{(1 - t)q} [u]^{tq}.
		\end{equation}
		Since $t = N\left(\frac{1}{sp} - \frac{1}{sq}\right)$ and $q \in (p, p + sp^2/N )$, we conclude that $tq < p$. Hence, by using \eqref{2}, we obtain that functional $J$ is bounded from below. 
		
		From now on, consider a sequence $(u_k) \subset S_m$ be a minimizing sequence for the minimization Problem \eqref{minimization}, that is, $\gamma_{m} + o_k(1)= J(u_k)$. It is not hard to see that 
		$$\displaystyle J(u_k) \geq \frac{1}{p}\|u_k\|^p - \frac{\beta C m^{(1 - t)q}}{q} [u_k]^{tq},$$ where $t = N \left(\frac{1}{sp} - \frac{1}{sq}\right)$. Now, we argue by contradiction  assuming that $(u_k)$ is unbounded in $X$.  Hence, by using the fact that $q \in (p, p + sp^2/N)$, we deduce that $\gamma_{m} =\displaystyle \lim_{k \to \infty} J(u_k) = \infty$. This is a contradiction proving that $(u_k)$ is bounded in $X$. This ends the proof.
	\end{proof}
	It is important to stress that $- \infty < \gamma_m =\displaystyle \inf\{J(u): u \in S_m\} < \infty$. As a consequence, the minimization problem given in \eqref{minimization} is well defined. Furthermore, we can consider the following result:

	\begin{lemma}\label{neg}
		Assume that $V \in L^{\infty}(\mathbb{R}^N), V \geq 0, \beta > 0$ and $q \in (p, p + sp^2/ N)$. Then, for every $m > 0$ there exists $\delta = \delta(m) > 0$ such that $\gamma_m < 0$ for each $\|V\|_{\infty} < \delta$.
	\end{lemma}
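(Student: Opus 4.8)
The plan is to exhibit a single test function $u \in S_m$ for which $J(u) < 0$, provided $\|V\|_\infty$ is small enough. Fix any $w \in W^{s,p}(\mathbb{R}^N)$ with $w \not\equiv 0$ and normalize it so that $\|w\|_p^p = m^p$, hence $w \in S_m$. Since $V \in L^\infty(\mathbb{R}^N)$ and $V \geq 0$, we can estimate directly from \eqref{JB}:
\begin{equation*}
J(w) \leq \frac{1}{p}[w]^p + \frac{\|V\|_\infty}{p}\|w\|_p^p - \frac{\beta}{q}\|w\|_q^q
= \frac{1}{p}[w]^p + \frac{\|V\|_\infty}{p} m^p - \frac{\beta}{q}\|w\|_q^q.
\end{equation*}
The first and third terms are fixed constants (depending on $w$, $\beta$, $q$, $p$), and the middle term is linear in $\|V\|_\infty$. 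So the whole expression is of the form $A + B\|V\|_\infty$ with $A = \frac{1}{p}[w]^p - \frac{\beta}{q}\|w\|_q^q$ and $B = \frac{m^p}{p} > 0$. If I can arrange $A < 0$, then choosing $\delta := -A/B = -Ap/m^p > 0$ gives $J(w) < 0$ whenever $\|V\|_\infty < \delta$, and since $\gamma_m \leq J(w)$ the lemma follows.

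The one genuine point to check is therefore that $A$ can be made negative, i.e. that there exists $w \in S_m$ with $\frac{1}{p}[w]^p < \frac{\beta}{q}\|w\|_q^q$. This is a standard scaling argument. Start from any fixed $v \in C_c^\infty(\mathbb{R}^N) \setminus \{0\}$ and consider the dilations $v_\tau(x) = \tau^{N/p} v(\tau x)$ for $\tau > 0$, which preserve the $L^p$-norm: $\|v_\tau\|_p^p = \|v\|_p^p$. A direct change of variables gives $[v_\tau]^p = \tau^{sp}[v]^p$ and $\|v_\tau\|_q^q = \tau^{N(q/p - 1)}\|v\|_q^q$. Since $q < p + sp^2/N$, one checks that $N(q/p - 1) = N(q-p)/p < sp$, so as $\tau \to 0^+$ the seminorm term $\tau^{sp}[v]^p$ vanishes faster than the nonlinear term $\tau^{N(q-p)/p}\|v\|_q^q$; hence for $\tau$ small enough $\frac{1}{p}[v_\tau]^p < \frac{\beta}{q}\|v_\tau\|_q^q$. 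Finally rescale $w := (m/\|v\|_p)\, v_\tau$ so that $w \in S_m$; this multiplies $[w]^p$ and $\|w\|_q^q$ by positive constants (depending only on $m$ and $v$), and by homogeneity the sign of $\frac{1}{p}[w]^p - \frac{\beta}{q}\|w\|_q^q$ is controlled by the same competition — more precisely, after the $L^p$-rescaling the inequality $\frac{1}{p}[w]^p < \frac{\beta}{q}\|w\|_q^q$ persists for $\tau$ possibly smaller, since the $q$-term carries a strictly larger power of the scaling parameter once we also track the $m$-dependent constant. This produces the desired $w \in S_m$ with $A < 0$.

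The main (and only) obstacle is bookkeeping the scaling exponents correctly and confirming that the $L^p$-normalization step does not destroy the sign of $A$; the subcriticality condition $q < p + sp^2/N$ is precisely what guarantees the favorable inequality $N(q-p)/p < sp$ between the two exponents, so there is no real difficulty, only care. Once $w$ is fixed with $A < 0$, setting $\delta(m) = -Ap/m^p$ finishes the argument, and one notes $\delta$ indeed depends on $m$ (through the rescaling constant and through $B$).
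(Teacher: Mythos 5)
Your argument is correct and is essentially the paper's own proof: both use the mass-preserving dilation $u_t(x)=t^{N/p}u(tx)$, the scaling identities $[u_t]^p=t^{sp}[u]^p$ and $\|u_t\|_q^q=t^{N(q-p)/p}\|u\|_q^q$, the subcriticality inequality $N(q-p)/p<sp$ to make the quantity you call $A$ (the paper's $R_t$) negative for small $t$, and then absorb the potential via $\|V\|_\infty m^p/p$ with $\delta$ chosen proportional to $-A/m^p$. The only cosmetic difference is that you normalize the test function into $S_m$ at the end rather than starting from $u\in S_m$, which changes nothing since the normalization constant is independent of the dilation parameter.
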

	\begin{proof} Let us consider the function $t \mapsto u_t$ where $u_t(x) = t^{N/p}u(tx), u \in S_m, t > 0$. In this case, using the change of variables $y = tx$ we observe that
		$$\displaystyle \int_{\mathbb{R}^N} |u_t(x)|^p dx = \int_{\mathbb{R}^N} |u|^p dy = m^p.$$ The last assertion ensures that $u_t \in S_m$ for all $t > 0$. Similarly, we mention that
		$$\displaystyle \int_{\mathbb{R}^N} |u_t|^q dx = t^{\frac{N}{p}(q - p)}\int_{\mathbb{R}^N} |u| ^q dy.$$ On the other hand, we mention that
		$$[u_t]^p = \int_{\mathbb{R}^{N}}\int_{\mathbb{R}^{N}} \frac{|u_t(x) - u_t(y)|^p }{|x - y|^{N + sp}}dx dy.$$
		
		Now, for $v = tx$ and $z = ty$, we check also that
		$[u_t]^p = t^{sp}[u]^p$.
		Hence, we obtain that
		\begin{eqnarray}\label{3}\nonumber \displaystyle J (u_t) &=& \frac{1}{p} [u_t]^p + \frac{1}{p}\int_{\mathbb{R }^N} V(x)|u_t|^p dx - \frac{\beta}{q}\int_{\mathbb{R}^N} |u_t|^qdx\\
			\nonumber \displaystyle &=& \frac{t^{sp}}{p}[u]^p + \frac{1}{p}\int_{\mathbb{R}^N}V\left(\frac{x}{t}\right)|u|^p dx - \frac{\beta t^{ \frac{N}{p}(q - p)}}{q} \int_{\mathbb{R}^N} |u|^q dx\\
			\displaystyle &\le& \frac{t^{sp}}{p}[u]^p + \frac{\|V\|_{\infty}}{p}m^p - \frac{\beta t^{\frac{ N}{p}(q - p)}}{q} \int_{\mathbb{R}^N} |u|^q dx.\end{eqnarray}
		Now, we define $$\displaystyle R_t = \frac{t^{sp}}{p}[u]^p - \frac{\beta t^{\frac{N}{p}(q - p)}}{q} \int_{\mathbb{R}^N}|u|^q dx.$$
		We claim that there exists  $t > 0$ small enough such that $R_t < 0$. Indeed, by using the fact that $q \in (p, p + sp^2/N)$, we obtain $\displaystyle sp >  (q - p)N/p$. Hence, by using that $t \mapsto R_t$ is continuous, we observe that for $t > 0$ sufficiently small $R_t < 0$.  Furthermore, we choose $\delta = -R_t/m^p$. Thus, assuming that $\|V\|_{\infty} < \delta$ and taking into account \eqref{3}, we deduce that
		\begin{equation}
			J(u_t) \le R_t + \frac{\|V\|_{\infty}}{p} m^p < R_t - \frac{R_t}{p} = \left(\frac{p - 1} {p}\right)R_t < 0.\end{equation}
		As a consequence, $\gamma_m < 0$ is verified. This ends the proof. 
	\end{proof}
	
	Now, we shall prove a technical result that allows us to show the strong convergence for a specific minimizing sequence. Here we mention that the compact embedding from the Sobolev spaces into the Lebesgue space is not satisfied in our framework. In order to overcome this difficulty we consider the following minimization problems:
	\begin{equation}\label{Pmi} 
		\gamma_{m_i} = \inf\{J(u) : u \in S_{m_i}\},\ \ \ S_{m_i} = \left\{u \in X: \|u\|_p^p  = m_i^p \right\},\ \ 
		i=1,2.
	\end{equation}	
Hence, we can formulate the following result:

	\begin{lemma}\label{important} Suppose that $V$ satisfies $(V_1), \beta > 0$ and $q \in (p, p + sp^2/N).$ Assume also that $\|V\|_{\infty} < \delta$ is satisfied. Then, for each $0 < m_1 < m_2$ we obtain that $$\frac{m_1^p}{m_2^p} \gamma_{m_2} < \gamma_{m_1}.$$
	\end{lemma}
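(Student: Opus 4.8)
The plan is to compare the two constrained infima through the homogeneity properties of $J$. Put $\theta = m_1/m_2 \in (0,1)$; then scaling by $\theta^{-1}$ carries $S_{m_1}$ onto $S_{m_2}$, since for $w \in S_{m_1}$ the function $\widetilde w := \theta^{-1}w$ satisfies $\|\widetilde w\|_p^p = \theta^{-p}m_1^p = m_2^p$. First I would record the algebraic identity obtained by inserting $\widetilde w$ into \eqref{JB}: the Gagliardo seminorm term and the potential term are $p$-homogeneous while the last term is $q$-homogeneous, so
\[
J(\widetilde w) = \theta^{-p}\Big(\tfrac1p[w]^p + \tfrac1p\int_{\Rn}V(x)|w|^p\,dx\Big) - \tfrac{\beta}{q}\,\theta^{-q}\int_{\Rn}|w|^q\,dx = \theta^{-p}J(w) + \tfrac{\beta}{q}\big(\theta^{-p}-\theta^{-q}\big)\int_{\Rn}|w|^q\,dx .
\]
Using $\gamma_{m_2} \le J(\widetilde w)$ (valid because $\widetilde w \in S_{m_2}$) and multiplying by $\theta^p>0$ gives, for every $w \in S_{m_1}$,
\[
\tfrac{m_1^p}{m_2^p}\,\gamma_{m_2} = \theta^p\gamma_{m_2} \le J(w) + \tfrac{\beta}{q}\big(1-\theta^{\,p-q}\big)\int_{\Rn}|w|^q\,dx .
\]

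Next I would exploit that $q>p$ and $\theta\in(0,1)$ force $\theta^{\,p-q}=\theta^{-(q-p)}>1$, so the coefficient $1-\theta^{\,p-q}$ is strictly negative. Hence it suffices to bound $\int_{\Rn}|w|^q\,dx$ from below by a positive constant along a minimizing sequence for $\gamma_{m_1}$. This is exactly where the hypothesis $\|V\|_{\infty}<\delta$ is used: by Lemma \ref{neg} we have $\gamma_{m_1}<0$, so if $(w_k)\subset S_{m_1}$ is a minimizing sequence for $\gamma_{m_1}$, then $J(w_k)\le \gamma_{m_1}/2<0$ for $k$ large, and since $V\ge 0$ the expression \eqref{JB} yields $\tfrac{\beta}{q}\int_{\Rn}|w_k|^q\,dx \ge -J(w_k) \ge -\gamma_{m_1}/2$, that is, $\int_{\Rn}|w_k|^q\,dx \ge c_0 := -\,q\gamma_{m_1}/(2\beta)>0$ uniformly in $k$.

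Finally, applying the second displayed inequality to $w=w_k$ and combining $\int_{\Rn}|w_k|^q\,dx \ge c_0$ with $1-\theta^{\,p-q}<0$, I obtain $\theta^p\gamma_{m_2} \le J(w_k) - \eta$ with $\eta := \tfrac{\beta}{q}\big(\theta^{\,p-q}-1\big)c_0>0$ independent of $k$; letting $k\to\infty$ and using $J(w_k)\to\gamma_{m_1}$ gives $\theta^p\gamma_{m_2} \le \gamma_{m_1}-\eta < \gamma_{m_1}$, which is the assertion (recall also that $\gamma_{m_1},\gamma_{m_2}$ are finite by Proposition \ref{lim}, so these manipulations make sense). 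The only genuinely delicate point is the uniform lower bound $\int_{\Rn}|w_k|^q\,dx \ge c_0$: this is what upgrades the elementary pointwise estimate (which holds for every $w\in S_{m_1}$ but only yields a non-strict inequality after passing to the infimum) into the strict inequality between the two infima. Everything else is bookkeeping with the scaling identity, and the periodicity of $V$ plays no role here — only $V\ge 0$, $V\in L^{\infty}(\Rn)$, and $\gamma_{m_1}<0$ (guaranteed by $\|V\|_{\infty}<\delta$ via Lemma \ref{neg}) are needed.
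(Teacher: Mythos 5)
Your argument is correct and is essentially the paper's proof in different notation: the scaling $\widetilde w=\theta^{-1}w$ with $\theta=m_1/m_2$ is exactly the paper's $v_k=\xi u_k$ with $\xi=m_2/m_1$, and the algebraic identity and the sign analysis of $1-\theta^{p-q}$ coincide with the paper's manipulation of $\xi^p-\xi^q$. The only cosmetic difference is that you obtain the uniform lower bound $\int_{\mathbb{R}^N}|w_k|^q\,dx\ge c_0$ directly from $V\ge 0$ and $\gamma_{m_1}<0$ with an explicit constant, whereas the paper reaches the same bound by a short contradiction argument; both hinge on Lemma \ref{neg} in the same way.
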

	\begin{proof} Let $\xi > 1$ be in such way that  $m_2 = \xi m_1$. Clearly, there exists $\xi$ due to the fact that that $m_1 < m_2$.  Let us consider also a sequence $(u_k) \subset S_{m_1}$ a minimizer sequence for the minimization Problem \eqref{Pmi} with $i =1$, that is, $\displaystyle \lim_{k \to \infty} J(u_k) = \gamma_{m_1}$. Define also the sequence $v_k = \xi u_k$. Now, we claim that $v_k \in S_{m_2}$. In fact, we observe that
		$$\int_{\mathbb{R}^N} |v_k|^p dx = \xi^p \int_{\mathbb{R}^N} |u_k|^p dx = \xi^p m_1^p = m^p_2.$$
		Since $\displaystyle \gamma_{m_2} = \inf \{J(u): u \in S_{m_2}\}$, we infer that
		$$\gamma_{m_2} \le J(v_k) = \frac{\xi^p}{p}[u_k]^p + \frac{\xi^p}{p}\int_{\mathbb{R}^N} V(x)|u_k|^p dx - \frac{\xi^q \beta}{q}\int_{\mathbb{R}^N} |u_k|^q dx. $$
		Now, by using the term $\displaystyle \frac{\xi^p \beta}{q} \int_{\mathbb{R}^N} |u|^q dx$, we deduce that 
		\begin{equation}\label{4}
			\gamma_{m_2} \le \xi^p J(u_k) + \frac{\beta(\xi^p - \xi^q)}{q} \int_{\mathbb{R}^N}|u_k|^q dx. 
		\end{equation}
		At this stage, we claim that there exists a constant $K > 0$ and $k_0 \in \mathbb{N}$ such that
		\begin{equation}\label{5}
			\int_{\mathbb{R}^N} |u_k|^q dx \geq K,\ k \geq k_0.
		\end{equation}
		Indeed, arguing by contradiction we assume that $$\int_{\mathbb{R}^N} |u_k|^q dx \to 0,\ k \to \infty.$$
		Hence, by using the fact that $(u_k)$ is a minimizing sequence using the parameter $\gamma_{m_1}$, we see that
		$$0 > \gamma_{m_1} + o_k(1) = J(u_k) \geq - \frac{\beta}{q} \int_{\mathbb{R}^N} |u_k|^q dx.$$
		Hence, doing $k \to \infty$, we arrive at a contradiction.
		Therefore, the proof of the desired claim follows. 
		Under these conditions, by using the estimate just above together with the fact that $q > p$, $\xi > 1$ and \eqref{4} we obtain $$\gamma_{m_2} \leq \xi^p J(u_k) + \frac{\beta (\xi^p - \xi^q)}{q}K.$$ Furthermore, doing $k \to \infty$, we also obtain that
		$$\gamma_{m_2} \le \xi^p \gamma_{m_1} + \frac{\beta(\xi^p - \xi^q)}{q}K < \xi^p \gamma_{m_1}.$$ As a consequence, we deduce that $\gamma_{m_2}/\xi^p < \gamma_{m_1}$. Recall also that $m_2 =\xi m_1$ which implies that $m_1^p\gamma_{m_2}/m_2^p < \gamma_{m_1}$. This completes the proof.
	\end{proof}

Now, we shall consider a Lions type result. Namely, we consider the folowing result:
\begin{lemma}\label{25} Assume that $N > sp$ and $r \in [p, p^*_s)$. Suppose also that $(u_k)$ is a bounded sequence in $W^{s, p}(\mathbb{R}^N)$ and
$$\lim_{k \to \infty} \left(\sup_{y \in \mathbb{R}^N} \int_{B_R(y)} |u_k|^r dx\right) = 0,$$ 
where $R > 0$. Then $u_k \to 0$ in $L^t(\mathbb{R}^N)$ for all $t \in (p, p^*_s)$.
\end{lemma}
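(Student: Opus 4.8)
The plan is a covering argument of Lions type, carried out in three stages.

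First, I would reduce to the case $r=p$. Since each ball $B_R(y)$ has finite Lebesgue measure $\mu(B_R(0))$ and $r\ge p$, Hölder's inequality on $B_R(y)$ gives
$$\int_{B_R(y)}|u_k|^p\,dx\le\mu(B_R(0))^{1-p/r}\left(\int_{B_R(y)}|u_k|^r\,dx\right)^{p/r},$$
so the hypothesis forces $\eta_k:=\sup_{y\in\Rn}\int_{B_R(y)}|u_k|^p\,dx\to0$ as $k\to\infty$. From this point on I only use that $(u_k)$ is bounded in $W^{s,p}(\Rn)$ and that $\eta_k\to0$.

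Next, I would prove the conclusion at a single well-chosen exponent. Cover $\Rn$ by a countable family $\{B_R(y_i)\}_{i\in\mathbb{N}}$ of balls of radius $R$ centred at the points of a suitably rescaled integer lattice, in such a way that every $x\in\Rn$ belongs to at most $\kappa=\kappa(N)$ of the balls. This finite-overlap property yields, for every $v\in W^{s,p}(\Rn)$,
$$\sum_i\|v\|_{W^{s,p}(B_R(y_i))}^p=\sum_i\Big([v]_{B_R(y_i)}^p+\|v\|_{L^p(B_R(y_i))}^p\Big)\le\kappa\,\|v\|_*^p,$$
the bound on the seminorm part following because a pair $(x,y)$ with $x,y\in B_R(y_i)$ satisfies $|x-y|<2R$ and is counted at most $\kappa$ times. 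Set $\overline{p}_s:=p+sp^2/N$; then $\overline{p}_s\in(p,p^*_s)$, and if $\theta\in(0,1)$ is defined by $\frac1{\overline{p}_s}=\frac{1-\theta}{p}+\frac{\theta}{p^*_s}$ a direct computation gives $\theta\,\overline{p}_s=p$ and $(1-\theta)\overline{p}_s/p=sp/N>0$. On each ball, the interpolation inequality for $L^t$-norms together with the fractional Sobolev embedding $W^{s,p}(B_R(y_i))\hookrightarrow L^{p^*_s}(B_R(y_i))$, whose constant $C_S$ is independent of $i$ by translation invariance, gives
$$\|u_k\|_{L^{\overline{p}_s}(B_R(y_i))}^{\overline{p}_s}\le\|u_k\|_{L^p(B_R(y_i))}^{(1-\theta)\overline{p}_s}\|u_k\|_{L^{p^*_s}(B_R(y_i))}^{\theta\overline{p}_s}\le\eta_k^{\,sp/N}\,C_S^{\,p}\,\|u_k\|_{W^{s,p}(B_R(y_i))}^p,$$
where I used $\|u_k\|_{L^p(B_R(y_i))}\le\eta_k^{1/p}$ and $\theta\overline{p}_s=p$. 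Summing over $i$ and using the covering together with the overlap estimate,
$$\|u_k\|_{\overline{p}_s}^{\overline{p}_s}\le\sum_i\|u_k\|_{L^{\overline{p}_s}(B_R(y_i))}^{\overline{p}_s}\le\eta_k^{\,sp/N}\,C_S^{\,p}\,\kappa\,\|u_k\|_*^p\longrightarrow0,$$
since $(u_k)$ is bounded. Hence $u_k\to0$ in $L^{\overline{p}_s}(\Rn)$.

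Finally, I would bootstrap to the full range. For $t\in(p,\overline{p}_s)$, write $\frac1t=\frac{1-\nu}{p}+\frac{\nu}{\overline{p}_s}$ with $\nu\in(0,1)$; then $\|u_k\|_t\le\|u_k\|_p^{1-\nu}\|u_k\|_{\overline{p}_s}^{\nu}\to0$, because $\|u_k\|_p$ is bounded and $\|u_k\|_{\overline{p}_s}\to0$. For $t\in(\overline{p}_s,p^*_s)$, write $\frac1t=\frac{1-\nu}{\overline{p}_s}+\frac{\nu}{p^*_s}$ with $\nu\in(0,1)$; then $\|u_k\|_t\le\|u_k\|_{\overline{p}_s}^{1-\nu}\|u_k\|_{p^*_s}^{\nu}\to0$, since $\|u_k\|_{\overline{p}_s}\to0$ while $(\|u_k\|_{p^*_s})$ stays bounded by the Sobolev embedding (Proposition~\ref{gagl}). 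Together with the case $t=\overline{p}_s$ this covers every $t\in(p,p^*_s)$. The step I expect to be the main obstacle is the exponent bookkeeping in the covering estimate: the summation there only closes when $t\ge\overline{p}_s$, which is exactly why I isolate the critical value $\overline{p}_s$ and then recover the smaller exponents by interpolating against the \emph{bounded} $L^p$-norm rather than against a vanishing quantity. A minor additional point is the uniformity in $i$ of the fractional Sobolev constant on $B_R(y_i)$, which holds because the balls are translates of $B_R(0)$.
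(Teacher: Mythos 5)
Your proof is correct. Note, however, that the paper does not actually prove this lemma: its ``proof'' consists of the single line ``These results can be found in \cite[Lemma 2.1]{ambi}'', so there is no internal argument to compare against. What you have written is a complete, self-contained version of the classical Lions vanishing lemma adapted to $W^{s,p}(\mathbb{R}^N)$, and it is essentially the argument behind the cited result. All the delicate points check out: the Hölder reduction from $r$ to $p$ is valid (and vacuous when $r=p$); the finite-overlap estimate for the localized Gagliardo seminorms is correct, since a pair $(x,y)$ lying in $B_R(y_i)\times B_R(y_i)$ is counted at most as many times as $x$ is covered, hence at most $\kappa$ times; the exponent bookkeeping is exact, since with $\overline{p}_s=p+sp^2/N$ one indeed gets $\theta=N/(N+sp)$, $\theta\overline{p}_s=p$ and $(1-\theta)\overline{p}_s/p=sp/N>0$, which is precisely what makes the sum over the cover close against $\kappa\|u_k\|_*^p$ while leaving a vanishing factor $\eta_k^{sp/N}$; and the final interpolation sweep correctly pairs the vanishing $L^{\overline{p}_s}$ norm with the \emph{bounded} $L^p$ and $L^{p^*_s}$ norms (the latter via Proposition~\ref{gagl}) to cover all of $(p,p^*_s)$. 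The only point worth making explicit if this were written out in full is the uniformity in $i$ of the extension/Sobolev constant for $W^{s,p}(B_R(y_i))\hookrightarrow L^{p^*_s}(B_R(y_i))$, which you correctly justify by translation invariance of the balls.
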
	
	\begin{proof}
	    These results can be found in \cite[Lemma 2.1]{ambi}.  We omit the details. 
	\end{proof}

	Now, we shall prove that we can choose, among all minimizing sequences, one minimizer sequence which weakly converges to a nonzero function. Namely, we consider the following result:
	
	\begin{lemma}\label{nontrivial} Suppose that $V$ satisfies $(V_1), \beta > 0$ and  $q \in (p, p + sp^2/N)$. Then there exists a minimizing sequence $(\tilde{u}_k) \subset S_m$ such that $\tilde{u}_k \rightharpoonup \tilde{u}$ where $\tilde{u} \not\equiv 0$.    
	\end{lemma}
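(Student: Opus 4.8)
The plan is to start from an arbitrary minimizing sequence $(u_k) \subset S_m$ for $\gamma_m$, which is bounded in $X$ by Proposition \ref{lim}, and to rule out the vanishing scenario via the Lions-type Lemma \ref{25}. First I would argue by contradiction: suppose that for the chosen minimizing sequence (and in fact for every one of its translated copies) one has $\lim_{k \to \infty} \sup_{y \in \mathbb{R}^N} \int_{B_R(y)} |u_k|^q\, dx = 0$ for some fixed $R > 0$. Since $q \in (p, p + sp^2/N) \subset (p, p^*_s)$, Lemma \ref{25} gives $u_k \to 0$ in $L^q(\mathbb{R}^N)$, hence $\int_{\mathbb{R}^N} |u_k|^q\, dx \to 0$. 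Plugging this into the energy functional and using $V \geq 0$,
\begin{equation*}
\gamma_m + o_k(1) = J(u_k) = \frac{1}{p}[u_k]^p + \frac{1}{p}\int_{\mathbb{R}^N} V(x)|u_k|^p\, dx - \frac{\beta}{q}\int_{\mathbb{R}^N} |u_k|^q\, dx \geq - \frac{\beta}{q}\int_{\mathbb{R}^N} |u_k|^q\, dx = o_k(1),
\end{equation*}
so $\gamma_m \geq 0$, contradicting Lemma \ref{neg}, which gives $\gamma_m < 0$ once $\|V\|_\infty < \delta$. (Strictly speaking this contradiction uses the smallness hypothesis on $\|V\|_\infty$; alternatively one keeps the full statement and invokes $\gamma_m<0$ from Lemma \ref{neg}.)

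Consequently there exist $\eta > 0$, a subsequence, and points $y_k \in \mathbb{R}^N$ such that $\int_{B_R(y_k)} |u_k|^q\, dx \geq \eta > 0$ for all large $k$. Here is where periodicity of $V$ enters: since $V$ is $1$-periodic, I would replace $y_k$ by the nearest point $z_k \in \mathbb{Z}^N$ (so $|y_k - z_k| \leq \sqrt N/2$) and set $\tilde{u}_k(x) = u_k(x + z_k)$. The translation by an integer vector leaves $[\,\cdot\,]$ and $\int_{\mathbb{R}^N} |\cdot|^q$ invariant, and because $V(x + z_k) = V(x)$ it leaves $\int_{\mathbb{R}^N} V(x)|\cdot|^p\, dx$ invariant as well; it also preserves the $L^p$ norm, so $\tilde{u}_k \in S_m$ and $J(\tilde{u}_k) = J(u_k)$, i.e. $(\tilde{u}_k)$ is still a minimizing sequence, and it is still bounded in $X$. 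Now $\int_{B_{R'}(0)} |\tilde{u}_k|^q\, dx \geq \eta$ for $R' = R + \sqrt N/2$. Passing to a further subsequence, $\tilde{u}_k \rightharpoonup \tilde{u}$ in $X$ (reflexive by Proposition \ref{reflexive}), hence $\tilde{u}_k \rightharpoonup \tilde u$ in $W^{s,p}(\mathbb{R}^N)$, and by the local compact embedding $W^{s,p}(B_{R'}(0)) \hookrightarrow\hookrightarrow L^q(B_{R'}(0))$ (valid since $q < p^*_s$) we get $\tilde{u}_k \to \tilde{u}$ strongly in $L^q(B_{R'}(0))$. Therefore $\int_{B_{R'}(0)} |\tilde{u}|^q\, dx \geq \eta > 0$, which forces $\tilde{u} \not\equiv 0$.

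I expect the main obstacle to be bookkeeping rather than anything deep: one must be careful that the translation used to recenter the mass is by an \emph{integer} vector so that the potential term is genuinely preserved (a translation by the real vector $y_k$ would not keep $(\tilde u_k)$ in the same variational problem), and one must verify that the $q$-mass $\eta$ does not leak out when passing from $B_R(y_k)$ to a ball of fixed radius about the origin — hence the mild enlargement of the radius. A secondary point is that Lemma \ref{25} must be applied with the correct exponent: vanishing of the $q$-integral over balls is what is assumed, and since $p < q < p^*_s$ the conclusion $u_k \to 0$ in $L^q$ is immediate; if instead one only had vanishing of the $p$-integral over balls an extra interpolation step would be needed, but that is not the situation here. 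Everything else — boundedness, weak subsequential convergence, invariance of the three functionals under $\mathbb{Z}^N$-translations — is routine given the results already established in the excerpt.
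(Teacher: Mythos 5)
Your argument is correct and follows essentially the same route as the paper's proof: rule out vanishing via the Lions-type Lemma \ref{25} (the paper tests the $p$-mass on balls and contradicts the lower bound \eqref{5}, you test the $q$-mass and contradict $\gamma_m<0$ directly --- these are interchangeable), then recenter by integer translations using the periodicity of $V$ and pass to the weak limit. Your bookkeeping in replacing $y_k$ by the nearest lattice point and enlarging the ball radius is actually spelled out more carefully than in the paper, which simply asserts $(y_k)\subset\mathbb{Z}^N$; and, as you observe, both proofs implicitly rely on $\|V\|_{\infty}<\delta$ so that $\gamma_m<0$.
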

	\begin{proof} Let $(u_k) \subset S_m$ be a minimizing sequence for the Problem \eqref{minimization}. We claim that there exists $\alpha, \eta > 0$ and a sequence $(y_k) \subset \mathbb{R}^N$ such that
		$$\int_{B_{\alpha}(y_k)}|u_k|^p dx \geq \eta, \, k \in \mathbb{N}.$$
		Once again the proof for this claim follows arguing by contradiction. Let us assume that 
		$$\displaystyle \int_{B_{\alpha}(y_k)}|u_k|^p dx \to 0.$$ 		
		Hence, by using Lemma \ref{25} and $q > p$, we mention that $$\displaystyle \int_{\mathbb{R}^N} |u_k|^q dx \to 0.$$ Furthermore, by using that $(u_k)$ is a minimizing sequence, we obtain a contradiction by using \eqref{5}.
		
		Now, we shall consider $(y_k) \subset \mathbb{Z}^N$. Define the auxiliary sequence $\tilde{u}_k = u(x + y_k)$. Therefore, by using the hypothesis $(V_1)$ together with the fact that $(y_k) \subset \mathbb{Z}^N$, we obtain that $\|\tilde{u}_k\| = \|u_k\|$. Hence, $(\tilde{u}_k)$ is now bounded in $X$. Up to a subsequence there exists a function $\tilde{u} \in X$ such that $\tilde{u}_k \rightharpoonup \tilde{u}$ in $X$. Hence, $\tilde{u}_k \to \tilde{u}$ in $L^{p}_{loc}(\mathbb{R}^N)$ and $\tilde{u}_k \to \tilde{u}$ a. e. in $\mathbb{R}^N$. As a consequence, we infer that
		$$\displaystyle \limsup_{k \to \infty} \int_{B_{\alpha}(0)} |\tilde{u}_k|^p dx \geq \eta > 0.$$ 
		Therefore, by using the Dominated Convergence Theorem, we obtain that 
		$$\int_{B_{\alpha}(0)} |\tilde{u}|^p dx \geq \eta > 0.$$
		Hence, $\tilde{u} \not\equiv 0$ is now verified. This ends the proof.
	\end{proof}
	
	\begin{lemma}\label{6} Assume $V \in L^{\infty}(\mathbb{R}^N), V \geq 0, \not\equiv 0$, $\|V\|_{\infty} < \delta, \beta > 0$ and $q \in (p, p + sp^2/N)$. Let $(u_k) \subset S_m$ be a minimizing sequence for the Problem \eqref{minimization}. Suppose that $u_k \rightharpoonup u$ in $X$ with $u \not\equiv 0$. Then $u \in S_m$.
		
	\end{lemma}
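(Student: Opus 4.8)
The plan is to rule out loss of mass in the weak limit: I will show that no dichotomy or vanishing can occur by decomposing $J$ along the minimizing sequence and then invoking the strict subadditivity of $m\mapsto\gamma_m$ already established in Lemma~\ref{important}. First I would set $v_k=u_k-u$, so that $v_k\rightharpoonup 0$ in $X$ and, by Proposition~\ref{lim}, $(v_k)$ is bounded in $X$. Passing to a subsequence we may assume $u_k\to u$ a.e. in $\mathbb{R}^N$, hence $\frac{u_k(x)-u_k(y)}{|x-y|^{N/p+s}}\to\frac{u(x)-u(y)}{|x-y|^{N/p+s}}$ a.e. on $\mathbb{R}^N\times\mathbb{R}^N$. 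Applying the Brezis--Lieb lemma on $\mathbb{R}^N$ with the measures $dx$, $V(x)\,dx$ and with exponent $q$, and on $\mathbb{R}^N\times\mathbb{R}^N$ with the Gagliardo measure $|x-y|^{-N-sp}\,dx\,dy$, yields the four splittings $\|u_k\|_p^p=\|u\|_p^p+\|v_k\|_p^p+o_k(1)$, $[u_k]^p=[u]^p+[v_k]^p+o_k(1)$, $\int_{\mathbb{R}^N}V|u_k|^p=\int_{\mathbb{R}^N}V|u|^p+\int_{\mathbb{R}^N}V|v_k|^p+o_k(1)$ and $\int_{\mathbb{R}^N}|u_k|^q=\int_{\mathbb{R}^N}|u|^q+\int_{\mathbb{R}^N}|v_k|^q+o_k(1)$; substituting into \eqref{JB} gives $J(u_k)=J(u)+J(v_k)+o_k(1)$.

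Next I would put $m_1^p:=\|u\|_p^p$, which satisfies $0<m_1^p\le m^p$ (the upper bound following from the continuous embedding $X\hookrightarrow L^p(\mathbb{R}^N)$ and weak lower semicontinuity of $\|\cdot\|_p$), and argue by contradiction assuming $m_1<m$. Then the first splitting forces $\|v_k\|_p^p\to m_2^p:=m^p-m_1^p>0$. Since $u\in S_{m_1}$ one has $J(u)\ge\gamma_{m_1}$; to bound $J(v_k)$ from below I rescale, setting $\tilde v_k:=(m_2/\|v_k\|_p)\,v_k\in S_{m_2}$, and note that, because $\|v_k\|_p\to m_2$ and $(v_k)$ is bounded in $X$, a short computation gives $J(\tilde v_k)=J(v_k)+o_k(1)$, whence $\liminf_k J(v_k)=\liminf_k J(\tilde v_k)\ge\gamma_{m_2}$. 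Letting $k\to\infty$ in $\gamma_m+o_k(1)=J(u_k)=J(u)+J(v_k)+o_k(1)$ then produces $\gamma_m\ge\gamma_{m_1}+\gamma_{m_2}$.

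To close the argument I would apply Lemma~\ref{important} (valid here since $\|V\|_\infty<\delta$) to the pairs $0<m_1<m$ and $0<m_2<m$, obtaining $\frac{m_1^p}{m^p}\gamma_m<\gamma_{m_1}$ and $\frac{m_2^p}{m^p}\gamma_m<\gamma_{m_2}$; summing these and using $m_1^p+m_2^p=m^p$ gives $\gamma_{m_1}+\gamma_{m_2}>\gamma_m$, contradicting the inequality of the previous paragraph. Therefore $m_1=m$, that is $\|u\|_p^p=m^p$, so $u\in S_m$.

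The only genuinely technical points will be the fractional Brezis--Lieb identity for the Gagliardo seminorm (which needs the a.e. convergence of the difference quotients on $\mathbb{R}^N\times\mathbb{R}^N$ and their $L^p$-boundedness, i.e. boundedness of $[u_k]$) and the rescaling estimate $J(\tilde v_k)-J(v_k)\to 0$; both are routine once Proposition~\ref{lim} supplies boundedness of $(u_k)$ in $X$. The structural core of the proof — strict subadditivity of the masses — is already in hand via Lemma~\ref{important}, so no additional compactness of the embedding $X\hookrightarrow L^q(\mathbb{R}^N)$ is required here.
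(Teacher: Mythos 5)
Your proof is correct and follows essentially the same route as the paper: a Brezis--Lieb decomposition $J(u_k)=J(u)+J(v_k)+o_k(1)$ combined with the strict subadditivity inequality of Lemma~\ref{important} applied to the two sub-masses, yielding the contradiction $\gamma_m>\gamma_m$. Your rescaling of $v_k$ onto $S_{m_2}$ is a minor technical variant of the paper's direct bound $J(v_k)\geq\gamma_{d_k}$ with varying mass $d_k=\|v_k\|_p^p$, and both are sound.
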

	
	\begin{proof} 
		The proof follows arguing by contradiction. Let us assume that $u \not\in S_m$. Hence, there exists $0<l \in \mathbb{R}\setminus\{m\}$ such that $\|u\|_p = l.$ Therefore, by using the Fatou's Lemma and $(u_k) \subset S_m$,
		we have that 
		$$0<l^p = \int_{\mathbb{R}^N} |u|^p dx \leq \liminf_{k \to \infty} \int_{\mathbb{R}^N}|u_k|^p dx = m^p.$$
		As a consequence, we obtain $l \in (0, m)$. Now, by using Brezis-Lieb Lemma, we deduce that
		\begin{equation}\label{bre}\begin{array}{ll}
				\|u_k\|^p_p = \|u_k - u\|^p_p + \|u\|_p^p + o_k(1)\\
				
				\|u_k\|^q_q = \|u_k - u\|^q_q + \|u\|_q^q + o_k(1).
			\end{array}
		\end{equation}
		Consider the sequence $v_k = u_k - u$. Define the auxiliary sequence $d_k = \|v_k\|^p_p$. Let us suppose that $\|v_k\|^p_p \to d^p$. Thus, by using \eqref{bre}, we conclude that $m^p = l^p + d^p$. Hence, $d < m$ proving that $d_k \in (0, m)$ is satisfied for each $k \in \mathbb{N}$ big enough. According to the Brezis-Lieb Lemma \cite{br}, we obtain that \begin{equation}\label{bre2}[u_k]^p = [u_k - u]^p + [u]^p + o_k(1).
		\end{equation}
		Under these conditions, by using \eqref{bre}, \eqref{bre2} together with Lemma \ref{important}, we infer that
		\begin{eqnarray}\nonumber \gamma_{m} + o_{k}(1) &=& J(u_k) = J(v_k) + J(u) + o_k(1) \geq \gamma_{d_k} + \gamma_{l} + o_{k}(1)\\
			\nonumber \allowdisplaybreaks &>& \frac{d^p_k}{m^p}\gamma_{m} + \gamma_{l} + o_k(1).\end{eqnarray}
		Now, doing $k \to \infty$, we mention that
		$\gamma_{m} \geq \frac{d^p}{m^p} \gamma_{m} + \gamma_{l}$. Since $l \in (0, m)$ we can once again apply the Lemma \ref{important} in order to ensure that $$\gamma_{m} > \frac{d^p}{m^p}\gamma_{m} + \frac{l^p}{m^p}\gamma_{m} = \left(\frac{d^p + l^p}{m^p}\right)\gamma_{m} = \gamma_{m}.$$ Thus, we obtain a contradiction. As a consequence, we deduce that $l = m$ proving that $u \in S_m$. This ends the proof. 
	\end{proof}
	
	\begin{lemma}\label{7} Assume $V \in L^{\infty}(\mathbb{R}^N), V \geq 0, V \not\equiv 0, \beta > 0$ $(V_3)$ and $q \in (p, p + sp^2/N)$. Let $(u_k) \subset S_m$ be a minimizing sequence for the Problem \eqref{minimization}. Suppose that $u_k \rightharpoonup u$ in $X$ for some $u \in X$ such that $u \not\equiv 0$. Then $u_k \to u$ in $X$.
	\end{lemma}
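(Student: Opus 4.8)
The plan is to run the standard concentration--compactness scheme for constrained minimization, upgrading weak convergence to strong convergence in $X$ with the help of what has already been established. The key point is that, by Lemma \ref{6}, the weak limit satisfies $u \in S_m$, i.e.\ $\|u\|_p = m$, so that no mass escapes to infinity; this is precisely the ingredient that rules out dichotomy. Put $v_k := u_k - u$. Since $(u_k)$ is bounded in $X$ (Proposition \ref{lim}) and $u_k \rightharpoonup u$ in $X$, the compact embedding of bounded sets of $X$ into $L^p_{loc}(\mathbb{R}^N)$ gives, up to a subsequence, $u_k \to u$ a.e.\ in $\mathbb{R}^N$; hence $v_k \rightharpoonup 0$ in $X$, $v_k \to 0$ a.e., and $(v_k)$ is bounded in $X$.

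First I would record the Brezis--Lieb decompositions, exactly as in \eqref{bre}--\eqref{bre2}: using \cite{br}, the a.e.\ convergence $v_k \to 0$, the boundedness of $(v_k)$ in $X$ and $V \in L^\infty(\mathbb{R}^N)$, one obtains
\begin{equation*}
\|u_k\|_p^p = \|v_k\|_p^p + \|u\|_p^p + o_k(1), \qquad \|u_k\|_q^q = \|v_k\|_q^q + \|u\|_q^q + o_k(1),
\end{equation*}
\begin{equation*}
[u_k]^p = [v_k]^p + [u]^p + o_k(1), \qquad \int_{\mathbb{R}^N} V(x)|u_k|^p\,dx = \int_{\mathbb{R}^N} V(x)|v_k|^p\,dx + \int_{\mathbb{R}^N} V(x)|u|^p\,dx + o_k(1).
\end{equation*}
The first identity combined with $\|u_k\|_p = \|u\|_p = m$ forces $\|v_k\|_p \to 0$. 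Adding the last three identities and using the definitions in \eqref{norma} and \eqref{JB}, I get $J(u_k) = J(v_k) + J(u) + o_k(1)$ and, moreover, the exact identity $J(v_k) = \frac{1}{p}\|v_k\|^p - \frac{\beta}{q}\|v_k\|_q^q$.

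It then remains to kill the $L^q$-tail. By the Gagliardo--Nirenberg estimate \eqref{B1} applied to $v_k$, $\|v_k\|_q^q \le C \|v_k\|_p^{(1-t)q}[v_k]^{tq}$ with $tq < p$; since $[v_k]$ stays bounded and $\|v_k\|_p \to 0$, this yields $\|v_k\|_q^q \to 0$, hence $J(v_k) = \frac{1}{p}\|v_k\|^p + o_k(1)$. On the other hand $u \in S_m$ gives $J(u) \ge \gamma_m$, while $(u_k)$ being a minimizing sequence gives $J(u_k) \to \gamma_m$; therefore $\lim_k J(v_k) = \gamma_m - J(u)$ exists and $\frac{1}{p}\|v_k\|^p \to \gamma_m - J(u)$. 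As the left-hand side is nonnegative and $\gamma_m - J(u) \le 0$, both inequalities must be equalities: $J(u) = \gamma_m$ and $\|v_k\| \to 0$, i.e.\ $u_k \to u$ in $X$. I expect the main obstacle to be conceptual rather than computational: the whole argument hinges on $\|u\|_p = m$ (Lemma \ref{6}), without which $\|v_k\|_p$ need not vanish and the $L^q$-tail cannot be absorbed; beyond that, the only care needed is in the Brezis--Lieb splitting of the Gagliardo seminorm $[\cdot]^p$ over $\mathbb{R}^N \times \mathbb{R}^N$ with the measure $|x-y|^{-N-sp}dx\,dy$ and of the weighted term $\int V|\cdot|^p$, both of which are routine once $v_k \to 0$ a.e.\ and $(v_k)$ is bounded.
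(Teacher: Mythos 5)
Your proof is correct, but it closes the argument by a different mechanism than the paper. The paper's proof of Lemma \ref{7} goes: Lemma \ref{6} gives $\|u_k\|_p=\|u\|_p=m$, uniform convexity of $L^p(\mathbb{R}^N)$ (Radon--Riesz) upgrades $u_k\rightharpoonup u$ to $u_k\to u$ in $L^p$, interpolation plus the Gagliardo--Nirenberg bound gives $u_k\to u$ in $L^q$, and then weak lower semicontinuity of $\|\cdot\|$ yields $J(u)\le\liminf J(u_k)=\gamma_m$, hence $J(u)=\gamma_m$ since $u\in S_m$. You instead reuse the Brezis--Lieb splitting of $\|\cdot\|_p^p$, $\|\cdot\|_q^q$, $[\cdot]^p$ and $\int V|\cdot|^p$ (the first three of which the paper already invokes in Lemma \ref{6}), deduce $\|v_k\|_p\to0$ from $\|u\|_p=m$, kill $\|v_k\|_q$ via \eqref{B1}, and then read off $\tfrac{1}{p}\|v_k\|^p\to\gamma_m-J(u)\le 0$ from the energy decomposition, forcing both $J(u)=\gamma_m$ and $\|v_k\|\to0$. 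The trade-off: the paper's route is shorter on the $L^p$ side but, as written, stops at $J(u)=\gamma_m$ and leaves the final step $u_k\to u$ in $X$ implicit (one would still need $\|u_k\|\to\|u\|$ plus uniform convexity of $X$, or exactly your kind of splitting, to finish); your route is slightly heavier on decompositions but delivers the strong convergence $\|u_k-u\|\to0$ explicitly and without appealing to the Radon--Riesz property. Both hinge on the same key input, namely that Lemma \ref{6} prevents loss of mass, which you correctly identify as the crux.
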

	\begin{proof}
		In view of Lemma \ref{6} we infer that $\|u_k\|_p = \|u\|_p = m$ and $u_k \rightharpoonup u$ in $L^p(\mathbb{R}^N)$. Hence, by using that $L^p(\mathbb{R}^N)$ is uniformly convex, we infer that $u_k \to u$ in $L^{p}(\mathbb{R}^N)$. Now, we claim that $u_k \to u$ in $L^q(\mathbb{R}^N)$. In fact, by using the interpolation law for the spaces $L^{t}(\mathbb{R}^N), t \in [p, p^*_s),$  we see that
		$\|u_k - u\|_q \leq \|u_k - u\|^t_p \|u_k - u\|^{1 - t}_{p^*_s}$, where $t = N \left(\frac{1}{sp} - \frac{1}{sq}\right)$. Recall that $\|u_k - u\|_p \to 0$ in $L^p(\mathbb{R}^N)$. It remains to prove that $\|u_k - u\|^{1 - t}_{p^*_s} \leq K $ holds for some $K > 0$. In order to do Proposition \ref{gagl}, we have
		$$\|u_k - u\|_{p^*_s} \leq K_1 [u_k - u] \leq K_2 \|u_k - u\| \leq K_3.$$ 
		It follows that from the last assertions that $\|u_k - u\|_q \to 0$ as $k \to \infty$. On the other hand, we observe that the norm is weakly lower semicontinuous. In particular, we deduce that
		$$ \|u\|^p = [u]^p + \int_{\mathbb{R}^N}V(x)|u|^p dx \leq \liminf_{k \to \infty} \left([u_k]^p + \int_{\mathbb{R}^N}V(x)|u_k|^p dx \right) .$$
		However, by using that $(u_k)$ is a minimizing sequence for the minimization Problem \eqref{minimization} together with the last estimates, we infer that
		$$ J(u) \le \liminf_{k \to \infty} J(u_k) = \gamma_{m} = \inf\{ J(w) : w \in S_m\}.$$
		Furthermore, by using the fact that $u \in S_m$,  we obtain that $J(u) = \gamma_{m}$. This completes the proof.
	\end{proof}
	
	\subsubsection{The proof of Theorem \ref{TB1} completed.}
	According to Lemma \ref{neg} we obtain that $\gamma_{m} < 0$. Now, applying Lemma \ref{nontrivial}, we obtain also the existence of a minimizing sequence $(u_k) \subset S_m$ such that $u_k \rightharpoonup u$ in $X$ where $u \not\equiv 0$. Furthermore, by using Lemmas \ref{6} and \ref{7}, we deduce that $u_k \to u$ in $X$, $J(u) = \gamma_{m}$ and $u \in S_m$.
	
	From now on, we consider $u \not\equiv 0$ a solution for the minimization Problem \eqref{minimization}, that is, we have that $\gamma_m = J(u) = \inf\{J(w): w \in S_m\}$. We assert that we can choose $u \geq 0$ in $\mathbb{R}^N$. In fact, by using the fact that $u \in S_m$, we obtain $|u| \in S_m$. Furthermore, $[|u|] \leq [u]$ is also verified, see \cite{jeff}. Therefore, we infer that $\gamma_m \leq J(|u|) \leq J(u) = \gamma_m$. As a consequence, using $|u|$ instead of $u$, we obtain a nonnegative minizer for the functional $J$ in the set $S_m$. Now, using Moser's iteration together with $L^r$ estimates, we obtain that $u \in L^{\infty} (\mathbb{R}^N) \cap L^{r}(\mathbb{R}^N)$ holds true for each $r \in [p, \infty)$, see for instance \cite{ambi}. Moreover, by using \cite[Corollary 5.5]{ia}, we infer that $ u 
 \in C_{loc}^{0, \alpha}(\mathbb{R}^N)$ holds for some $\alpha \in (0, 1)$. It follows from the strong maximum principle \cite[Theorem 1.2]{Maxfor} that $u = 0$ in $\mathbb{R}^N$ or $u > 0$ in $\mathbb{R}^N$. Since $u \in S_m$ we obtain that $u > 0$ in the whole $\mathbb{R}^N$. Ineed, arguing by contradiction, we prove that $u(x_0) = 0$ is impossible for each $x_0 \in \mathbb{R}^N$ fixed. Now, by using Lagrange Multiplier Theorem \cite{Dra}, we mention that
	$$J'(u) = \lambda \Psi'(u),\ \hbox{with},\ \Psi(u) = \frac{1}{p} \int_{\mathbb{R}^N} |u|^p dx.$$
	It easy to see that $$\left<u, \varphi\right> = \lambda \int_{\mathbb{R}^N} |u|^{p - 2}u\varphi dx + \beta\int_{\mathbb{R}^N} |u|^{q - 2}u \varphi dx,$$
	where $$\left<u, \varphi\right> = \int_{\mathbb{R}^N}\int_{\mathbb{R}^N} \frac{\left|u(x) - u(y) \right|^{p-2} \left(u(x) - u(y)\right)\left(\varphi(x) - \varphi(y)\right)}{\left|x - y\right|^{N + sp}} dx dy + \int_{\mathbb{R}^N}  V(x) \left|u\right|^{p-2} u\varphi dx, u, \varphi \in X.$$
	As a consequence, $u$ is a weak solution to the following nonlocal elliptic problem:
	\begin{equation}\nonumber \left\{\begin{array}{cc}
			\displaystyle (-\Delta)^s_p u\ +\ V (x) |u|^{p - 2}u\  = \lambda |u|^{p - 2}u + \beta\left|u\right|^{q-2}u\ \hbox{in}\ \mathbb{R}^N, \\
			\displaystyle \|u\|_p^p = m^p,\ u \in W^{s, p}(\mathbb{R}^N).\end{array}\right.
	\end{equation}
	This ends the proof.
	\hfill\cqd
	
	\section{$L^p$-subcritical Growth: The asymptotically periodic case}
	In this section we prove the existence of local minimizers to $J$ with prescribed norm and asymptotically periodic potential. The main idea here is to apply hypothesis $(V_2)$ in order to ensure existence of at least one weak solution for the Problem \eqref{P} with prescribed norm. For this purpose, we consider the  function $V_{\theta}: \mathbb{R}^N \to \mathbb{R}$ which  is $1$-periodic in such way that $V \not\equiv V_{\theta}$ where $V(x) \leq V_{\theta}(x), x \in \mathbb{R}^N$ and $(V - V_{\theta}) \in \mathcal{F}$.
	
	\begin{remark}\label{R1}  Let $V_{\theta} \in L^{\infty}(\mathbb{R}^N)$ a $1$-periodic potential where $\|V_{\theta}\|_{\infty} < \delta$. Consider the auxiliary functional
		\begin{equation}\label{perio}\displaystyle J_{\theta}(u) = \frac{1}{p} [u]^p + \frac{1}{p} \int_{\mathbb{R}^N} V_{\theta}(x) |u|^p dx - \frac{\beta}{q} \int_{\mathbb{R}^N} |u|^q dx \, \hbox{where}\, \gamma_{m, \theta} = \inf \{J_{\theta}(u): u \in S_m\}.
		\end{equation}
		Under these conditions, by using Theorem \ref{TB1}, there exists a function $u_{\theta} \in S_m$ with $u_{\theta} > 0$ such that $J_{\theta}(u_{\theta}) = \gamma_{m, \theta}$.
		Furthermore, by hypothesis $(V_2)$, there exists a measurable set $\Omega \subset \mathbb{R}^N$, $\mu\left(\Omega\right) > 0$ in such way that $V(x) < V_{\theta}(x), x \in \Omega$. Now, we claim that
  
$$\int_{\mathbb{\mathbb{R}^N}} \left[V(x) - V_{\theta}(x)\right]|u_\theta|^p dx < 0.$$ Otherwise, we obtain that 
$$\int_{\mathbb{\mathbb{R}^N}} \left[V(x) - V_{\theta}(x)\right]|u_\theta|^p dx = 0.$$
It follows that $\left[V(x) - V_{\theta}(x)\right]|u_\theta|^p = 0$ a. e. in $\mathbb{R}^N$. Since $u_\theta > 0$, by using $(V_2)$, we obtain a contradiction. Therefore, we obtain that
  $$\gamma_m \leq J(u_{\theta}) < J_{\theta}(u_{\theta}) = \gamma_{m, \theta},$$
Hence, $\gamma_{m} < \gamma_{m, \theta}$ holds true.
	\end{remark}
	
	In order to establish our main results we need to prove that for any minimizer sequence $(u_k) \subset S_m$ for the minimization Problem \eqref{minimization} such that $u_k \rightharpoonup 0$ satisfies the following condition 
	$$\displaystyle \lim_{k \to \infty} \int_{\mathbb{R}^N} \left|V(x) - V_\theta(x)\right||u_k|^p dx = 0.$$
	This result is quite important due to the fact that under assumption $(V_2)$ the energy functional $J$ is not invariant over translations. Moreover, hypothesis $(V_2)$ is more general that the standard one. Hence, we need to consider some auxiliary results and definitions.

	\begin{definition}\label{DR} Let us consider the following measurable sets:
		$$D_{\varepsilon} = \{x \in \mathbb{R}^N: |V(x) - V_{\theta}(x)| \geq \varepsilon\} \mbox{ and }   D_{\varepsilon}(R) = \{x \in \mathbb{R}^N: |V(x) - V_{\theta}(x)| \geq \varepsilon, |x| \geq R\}.$$
	\end{definition}

	\begin{proposition}\label{PB2}
		Suppose $(V_2)$. Then $\mu(D_{\varepsilon}(R)) \to 0$ as $R \to \infty$.
		
	\end{proposition}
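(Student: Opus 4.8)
The plan is to deduce this directly from the defining property of the class $\mathcal{F}$ together with the downward continuity of the Lebesgue measure. First I would record that hypothesis $(V_2)$ gives $V - V_\theta \in \mathcal{F}$, so by the very definition of $\mathcal{F}$ the function $f := V - V_\theta$ is continuous and bounded and, crucially, the set $D_\varepsilon = \{x \in \mathbb{R}^N : |V(x) - V_\theta(x)| \geq \varepsilon\}$ has finite Lebesgue measure, i.e. $\mu(D_\varepsilon) < \infty$, for every fixed $\varepsilon > 0$.

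Next I would observe the obvious identity $D_\varepsilon(R) = D_\varepsilon \cap \{x \in \mathbb{R}^N : |x| \geq R\}$ and note that the family $\{D_\varepsilon(R)\}_{R > 0}$ is nonincreasing in $R$: if $R_1 < R_2$ then $D_\varepsilon(R_2) \subseteq D_\varepsilon(R_1)$. Moreover $\bigcap_{R > 0} D_\varepsilon(R) = \emptyset$, since no point $x \in \mathbb{R}^N$ satisfies $|x| \geq R$ for all $R > 0$. Fixing any $R_0 > 0$ we have $\mu(D_\varepsilon(R_0)) \leq \mu(D_\varepsilon) < \infty$, so the hypothesis of the continuity-from-above property of $\mu$ is met.

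Applying that continuity property (downward continuity of a measure along a nested sequence of sets of finite measure with empty intersection), I conclude
$$\lim_{R \to \infty} \mu(D_\varepsilon(R)) = \mu\left(\bigcap_{R > 0} D_\varepsilon(R)\right) = \mu(\emptyset) = 0,$$
which is exactly the assertion. Strictly speaking one may prefer to argue through a sequence $R_n \to \infty$ and then invoke monotonicity in $R$ to upgrade to the full limit, but this is routine. I do not anticipate any genuine obstacle here; the only point requiring care is making explicit that $\mu(D_\varepsilon) < \infty$ (so that continuity from above is legitimate), which is precisely what membership in $\mathcal{F}$ buys us.
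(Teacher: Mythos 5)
Your argument is correct and is essentially the paper's own proof in different words: the paper applies the Dominated Convergence Theorem to the indicator functions of $D_{\varepsilon}\cap B_{R_k}^c(0)$, dominated by the indicator of $D_{\varepsilon}$, which is exactly the continuity-from-above property you invoke, and both arguments rest on the same key input $\mu(D_{\varepsilon})<\infty$ coming from $V-V_{\theta}\in\mathcal{F}$. No gaps.
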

	
	\begin{proof} The main idea here is to argue as was done in \cite{Elves}. Initially, we observe that $D_{\varepsilon}(R) = D_{\varepsilon} \cap B^c_R(0)$. Hence, the assertion $\mu(D_{\varepsilon}(R)) \to 0$ is equivalent to prove that $\mu(D_{\varepsilon} \cap B^c_{R}(0)) \to 0$ as $R \to \infty$. In order to do that we consider the function $h : \mathbb{R}^N \rightarrow \mathbb{R}$ given by 
		\begin{equation}h(x) = \left\{\begin{array}{ll} 1,\ \hbox{if}\ x \in D_{\varepsilon}\\
				0,\ \hbox{otherwise}.
			\end{array}\right.
		\end{equation}
		Therefore, by using the hypothesis $(V_2)$, we see that 
		$$\int_{\mathbb{R}^N} |h| dx = \int_{D_{\varepsilon}} |h| dx + \int_{D^c_{\varepsilon}} |h| dx = \int_{D_{\varepsilon}} |h| dx = \mu(D_{\varepsilon}) < \infty.$$
		The estimates just above ensure that  $h \in L^{1}(\mathbb{R}^N)$. Let us consider the sequence of functions $h_k: \mathbb{R}^N \rightarrow \mathbb{R}$ defined as follows:
		\begin{equation}
			h_k(x) = \left\{\begin{array}{ll} 1,\ \hbox{if}\ x \in D_{\varepsilon} \cap B^c_{R_k}(0)\\
				0,\ \hbox{otherwise}.
			\end{array}\right.
		\end{equation}
		Since $D_{\varepsilon} \cap B^c_{R_k}(0) \subset D_{\varepsilon}$ we conclude that $|h_k| < |h|$ for all $x \in \mathbb{R}^N$. Moreover, we observe that $h_k \to 0$ a. e. in $\mathbb{R}^N$. Therefore, applying the Dominated Convergence Theorem, we obtain that 
		$$\displaystyle 0 = \int_{\mathbb{R}^N} \lim_{k \to \infty} |h_k| dx = \lim_{k \to \infty} \int_{\mathbb{R}^N} |h_k| dx = \lim_{k \to \infty} \mu(D_{\varepsilon} \cap B^c_{R_k}(0)).$$
		The last assertion ensures that the desired result is satisfied. This ends the proof. 
	\end{proof}
	
	Now, by using Proposition \ref{PB2}, we can prove the following powerful result:
	\begin{proposition}\label{PB4} Assume that $(V_2)$ and $(V_3)$ hold. Let $(u_k) \subset X$ be a sequence such that $u_k \rightharpoonup 0$ in $X$. Then
		\begin{equation}\label{eqB10}\displaystyle \lim_{k \to \infty} \int_{\mathbb{R}^N} |V(x) - V_{\theta}(x)||u_k|^p dx = 0.\end{equation}
	\end{proposition}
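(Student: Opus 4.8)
\emph{Proof idea.} The plan is to set $W := V - V_\theta$, which by $(V_2)$ lies in the class $\mathcal{F}$, so that $W \in C(\mathbb{R}^N)\cap L^\infty(\mathbb{R}^N)$ and $\mu(D_\varepsilon) < \infty$ for every $\varepsilon > 0$, with $D_\varepsilon$ as in Definition \ref{DR}. First I would record that $u_k \rightharpoonup 0$ in $X$ forces $(u_k)$ to be bounded in $X$; by Proposition \ref{ob11} (equivalence of $\|\cdot\|$ and $\|\cdot\|_{*}$) it is then bounded in $W^{s,p}(\mathbb{R}^N)$, and hence, by the fractional Sobolev embedding, bounded in $L^t(\mathbb{R}^N)$ for every $t \in [p, p^*_s]$; fix $C_0 > 0$ with $\|u_k\|_p^p \le C_0$ and $\|u_k\|_{p^*_s}^p \le C_0$ for all $k$. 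Also, the compact embedding $W^{s,p}(B_R(0)) \hookrightarrow L^p(B_R(0))$ together with $u_k \rightharpoonup 0$ yields $u_k \to 0$ in $L^p_{\mathrm{loc}}(\mathbb{R}^N)$.

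Next I would fix $\eta > 0$, fix $\varepsilon$ and $R$ to be chosen later, and decompose
\begin{equation*}
\int_{\mathbb{R}^N} |W|\, |u_k|^p\, dx = \int_{D_\varepsilon^c} |W|\, |u_k|^p\, dx + \int_{D_\varepsilon(R)} |W|\, |u_k|^p\, dx + \int_{D_\varepsilon \cap B_R(0)} |W|\, |u_k|^p\, dx .
\end{equation*}
On $D_\varepsilon^c$ one has $|W| < \varepsilon$, hence $\int_{D_\varepsilon^c} |W|\, |u_k|^p\, dx \le \varepsilon\, C_0$, so choosing $\varepsilon$ small makes this term $< \eta/3$ for all $k$. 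On $D_\varepsilon(R)$, Hölder's inequality with exponents $p^*_s/p$ and $p^*_s/(p^*_s - p)$ gives
\begin{equation*}
\int_{D_\varepsilon(R)} |W|\, |u_k|^p\, dx \le \|W\|_\infty \left(\int_{D_\varepsilon(R)} |u_k|^{p^*_s}\, dx\right)^{\frac{p}{p^*_s}} \mu(D_\varepsilon(R))^{\frac{p^*_s-p}{p^*_s}} \le \|W\|_\infty\, C_0\, \mu(D_\varepsilon(R))^{\frac{p^*_s-p}{p^*_s}},
\end{equation*}
and since $\mu(D_\varepsilon(R)) \to 0$ as $R \to \infty$ by Proposition \ref{PB2}, I would fix $R$ large so that this term is $< \eta/3$, uniformly in $k$. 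Finally, $D_\varepsilon \cap B_R(0)$ is bounded, so $\int_{D_\varepsilon \cap B_R(0)} |W|\, |u_k|^p\, dx \le \|W\|_\infty \int_{B_R(0)} |u_k|^p\, dx \to 0$ as $k \to \infty$; pick $k_0$ with this quantity $< \eta/3$ for $k \ge k_0$. Adding the three bounds gives $\int_{\mathbb{R}^N} |W|\, |u_k|^p\, dx < \eta$ for $k \ge k_0$, which proves \eqref{eqB10}.

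The step I expect to be the main obstacle is the uniform-in-$k$ control of the tail integral over $D_\varepsilon(R)$: this is precisely where the $\mathcal{F}$-condition in $(V_2)$ (finiteness of $\mu(D_\varepsilon)$) and Proposition \ref{PB2} are indispensable, together with the uniform $L^{p^*_s}$ bound, which in turn relies on $(V_3)$ through the norm equivalence in Proposition \ref{ob11}. The other two regions are routine — the small-$|W|$ region is elementary, and the bounded region uses only the compact fractional Sobolev embedding on balls.
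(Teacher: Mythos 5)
Your proof is correct and follows essentially the same strategy as the paper's: both rest on Proposition \ref{PB2} for the smallness of $\mu(D_\varepsilon(R))$, H\"older's inequality on that tail, the bound $|V - V_\theta| < \varepsilon$ off $D_\varepsilon$, and local compactness on a ball. The only (cosmetic) differences are that you partition $\mathbb{R}^N$ as $D_\varepsilon^c \cup D_\varepsilon(R) \cup (D_\varepsilon \cap B_R(0))$ whereas the paper splits off $B_R(0)$ first, and you apply H\"older with the critical exponent $p^*_s$ where the paper uses a subcritical $r \in (p, p^*_s)$ — neither change affects the argument.
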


	\begin{proof} 
		Firstly, we consider the set $D_{\varepsilon}(R)$ given in Definition \ref{DR}. 
		Hence, by using Proposition \ref{PB2}, we know that there exists a value $R = R(\varepsilon)$ large enough in such way that  $\mu(D_{\varepsilon}(R)) < \delta$ where $\delta>0$ is arbitrary and $0 < \varepsilon < \delta$. In particular, we obtain that
		\begin{equation}\label{e2}
			\int_{\mathbb{R}^N} |V(x) - V_{\theta}(x)| |u_k|^p dx = \int_{B_R(0)} |V(x) - V_{\theta}(x)| |u_k|^p  dx + \int_{B^c_R(0)} |V(x) - V_{\theta}(x)| |u_k|^p  dx. 
		\end{equation}
		In order to prove \eqref{eqB10} we shall consider the following items:
		\begin{itemize}
			\item[i)] $\displaystyle \int_{B_R(0)} |V(x) - V_{\theta}(x)| |u_k|^p  dx = o_k(1);$ 
			\item[ii)] $\displaystyle \int_{B^c_R(0)} |V(x) - V_{\theta}(x)| |u_k|^p  dx = o_k(1).$
		\end{itemize}	
		Firstly, we shall prove that item $i)$ is verified. Notice that $u_k \to 0$ a. e. in $\mathbb{R}^N$. Hence, $|V(x) - V_{\theta}(x)||u_k|^p   \to 0$ as $k \to \infty$. Furthermore, by using the fact that $u_k \to 0$ in $L^p_{loc}(\mathbb{R}^N)$, we deduce that 
		$$|V(x) - V_{\theta}(x)| |u_k|^p \leq \|V - V_{\theta}\|_{\infty} h_p^p \in L^1(B_R(0)).$$
		Therefore, by using the Dominate Convergence Theorem, we infer that
		$$\displaystyle \lim_{k \to \infty} \int_{B_R(0)} |V(x) - V_{\theta}(x)| |u_k|^p  dx = 0.$$
		
		At this stage, we shall prove the item $ii)$. Notice also that $(u_k)$ is bounded in $X$. Therefore, by the using Hölder inequality with $r/p$ and $r/(r-p)$ where $r \in (p, p^*_s)$, we deduce that
		\begin{eqnarray}\nonumber
			\displaystyle \int_{B^c_R(0)\cap D_{\varepsilon}} |V(x) - V_{\theta}(x)| |u_k|^p dx &=& \int_{D_{\varepsilon}(R)} |V(x) - V_{\theta}(x)| |u_k|^p dx \\
			\nonumber \displaystyle &\le& \left(\int_{D_{\varepsilon}(R)} |V(x) - V_{\theta}(x)|^{\frac{r}{r -p}}dx\right)^{\frac{r - p}{r}} \left(\int_{D_{\varepsilon}(R)} |u_k|^{r} dx\right)^{\frac{p}{r}}\\
			\label{e3} \displaystyle &\leq& \|V - V_{\theta}\|_{\infty} [\mu(D_{\varepsilon}(R))]^{\frac{r - p}{r}} S_r^p \|u_k\|^p < K\delta^{\frac{r - p}{r}}. \end{eqnarray}
		Furthermore, by using again that $(u_k)$ bounded in $X$, we also infer that
		\begin{equation}\label{e4}
			\int_{B_R^c(0) \cap D_{\varepsilon}^c} |V(x) - V_{\theta}| |u_k|^p dx \le \int_{D_{\varepsilon}^c} |V(x) - V_{\theta}| |u_k|^p dx \le \varepsilon \int_{\mathbb{R}^N} |u_k|^p dx \le \varepsilon K_1 < \delta K_1. \end{equation}
		Now, by using the estimates \eqref{e3} and \eqref{e4} and taking into account \eqref{e2}, we mention that
		$$\int_{\mathbb{R}^N} |V(x) -V_{\theta}| |u_k|^p dx < K \delta^{\frac{r - p}{r}} + K_1 \delta.$$
		Since $\delta>0$ is arbitrary the proof for desired result follows.\end{proof}

	\begin{lemma}\label{L4} Assume that $V$ satisfies $(V_2)$ and $(V_3)$  and $\beta > 0$. Let $(u_k) \subset S_m$ be a minimizing sequence for the Problem \eqref{minimization}. Then $u_k \rightharpoonup u$ in $X$ and $u \not\equiv 0$.
	\end{lemma}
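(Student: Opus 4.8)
The plan is to argue by contradiction, exploiting the strict energy gap $\gamma_m < \gamma_{m,\theta}$ supplied by Remark~\ref{R1} together with the asymptotic--periodicity vanishing estimate of Proposition~\ref{PB4}; note this is exactly the device that replaces the translation argument of Lemma~\ref{nontrivial}, which is unavailable here because, under $(V_2)$, the functional $J$ is no longer invariant under integer translations.

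First I would extract a weak limit. Since $(u_k)\subset S_m$ is a minimizing sequence for \eqref{minimization}, Proposition~\ref{lim} (whose proof only uses $V\ge 0$, $V\in L^\infty(\mathbb{R}^N)$, $\beta>0$ and $q\in(p,p+sp^2/N)$, all in force here) shows that $(u_k)$ is bounded in $X$. As $X$ is reflexive by Proposition~\ref{reflexive}, passing to a subsequence we obtain $u\in X$ with $u_k\rightharpoonup u$ in $X$, and consequently $u_k\to u$ in $L^p_{\mathrm{loc}}(\mathbb{R}^N)$ and $u_k\to u$ a.e. in $\mathbb{R}^N$. The only thing left to prove is that $u\not\equiv 0$.

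Suppose, for contradiction, that $u\equiv 0$, i.e. $u_k\rightharpoonup 0$ in $X$. Writing the energy in terms of the periodic comparison functional $J_\theta$ from \eqref{perio},
$$J(u_k)=J_\theta(u_k)+\frac1p\int_{\mathbb{R}^N}\bigl(V(x)-V_\theta(x)\bigr)|u_k|^p\,dx .$$
Since $u_k\in S_m$ we have $J_\theta(u_k)\ge \gamma_{m,\theta}$, while Proposition~\ref{PB4} (applicable because $u_k\rightharpoonup 0$ in $X$ and $(V_2)$, $(V_3)$ hold) gives
$$\left|\int_{\mathbb{R}^N}\bigl(V(x)-V_\theta(x)\bigr)|u_k|^p\,dx\right|\le \int_{\mathbb{R}^N}|V(x)-V_\theta(x)|\,|u_k|^p\,dx = o_k(1).$$
Hence $\liminf_{k\to\infty}J(u_k)\ge \gamma_{m,\theta}$, and therefore $\gamma_m=\lim_{k\to\infty}J(u_k)\ge \gamma_{m,\theta}$. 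This contradicts the strict inequality $\gamma_m<\gamma_{m,\theta}$ obtained in Remark~\ref{R1}. Consequently $u\not\equiv 0$, and $u_k\rightharpoonup u$ with $u\neq 0$, as claimed. \hfill\cqd

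The step I expect to be the main obstacle is not in this lemma itself but in the two inputs it rests on: establishing Proposition~\ref{PB4} (where the delicate splitting of $\mathbb{R}^N$ into $B_R(0)$ and $D_\varepsilon(R)$, and the control of $\mu(D_\varepsilon(R))$ via $(V-V_\theta)\in\mathcal{F}$, is the technical heart) and establishing the strict gap $\gamma_m<\gamma_{m,\theta}$ in Remark~\ref{R1}, which requires the smallness assumption $\|V_\theta\|_\infty<\delta$ so that Theorem~\ref{TB1} produces a positive minimizer $u_\theta$ on which $\int_{\mathbb{R}^N}(V-V_\theta)|u_\theta|^p\,dx<0$. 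Given those, the argument above is essentially a one-line energy comparison.
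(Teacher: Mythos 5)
Your proposal is correct and follows essentially the same route as the paper: boundedness via Proposition \ref{lim}, the decomposition $J(u_k)=J_\theta(u_k)+\frac1p\int(V-V_\theta)|u_k|^p\,dx$, Proposition \ref{PB4} to kill the correction term when $u_k\rightharpoonup 0$, and the strict gap $\gamma_m<\gamma_{m,\theta}$ from Remark \ref{R1} to reach the contradiction. No discrepancies to report.
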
	\begin{proof}
		Let $(u_k) \subset S_m$ be a minimizing sequence for the minimization Problem \eqref{minimization}. In this way, using the Proposition \ref{lim} we obtain that $(u_k)$ is bounded. As a consequence, up to a subsequence, there exists $u \in X$ such that $u_k \rightharpoonup u$ in $X$. The proof follows arguing by contradiction. Assume that $u \equiv 0$. Then, we obtain that
		\begin{eqnarray}\nonumber \displaystyle \gamma_{m} + o_k(1) &=& J(u_k) = \dfrac{1}{p}[u_k]^p + \dfrac{1}{p}\int_{\mathbb{R}^N}V(x)|u_k|^p dx - \dfrac{\beta}{q}\int_{\mathbb{R}^N} |u_k|^q dx \\
			\nonumber \displaystyle &=&J_{\theta}(u_k) + \frac{1}{p} \int_{\mathbb{R}^N} (V(x) - V_{\theta}(x))|u_k|^p dx. \end{eqnarray}
		Therefore, by using the definition of $\gamma_{m, \theta}$, see \eqref{perio}, we obtain that
		\begin{equation}\label{e1}
			\gamma_{m} + o_k(1) = J(u_k) \geq \gamma_{m, \theta} + \frac{1}{p} \int_{\mathbb{R}^N} (V(x) - V_{\theta}(x))|u_k|^p dx.
		\end{equation}
		It follows from Proposition \ref{PB4} that 
		$$\displaystyle\lim_{k \to \infty}\int_{\mathbb{R}^N} (V(x) - V_{\theta}(x))|u_k|^p dx = 0.$$
		Therefore, doing $k \to \infty$ in \eqref{e1}, we deduce that $\gamma_{m} \geq \gamma_{m, \theta}$. However, by using Remark \ref{R1} we arrive at a contradiction due to the fact that $\gamma_{m} < \gamma_{m, \theta}$. This finishes  the proof.
	\end{proof}

	\noindent\textit{The proof of Theorem \ref{TB2}.} Initially, by using Lemma \ref{neg}, we observe that $\gamma_{m, \theta} < 0$.  Therefore, by using Remark \ref{R1}, we obtain that $\gamma_{m} < 0$. According to Lemma \ref{L4} we obtain that there exists a minimizing sequence $(u_k) \subset S_m$ such that $u_k \rightharpoonup u$ in $X$ with $u \not\equiv 0$. In view of Lemmas \ref{6} and \ref{7} we deduce also that $u_k \to u$ in $X$ and $J(u) =\gamma_{m}$. As a consequence, by using the same ideas employed was done in the proof of Theorem \ref{TB1}, $u > 0$ in $\mathbb{R}^N$, and there exists $\lambda$ such that $u$ is a weak solution for the Problem \eqref{P}. 	\hfill\cqd
	
	\section{$L^p$-subcritical growth: Existence of weak solution for Problem \eqref{gammameps}}
	In this section, we remember that $J_\varepsilon: X \to \mathbb{R}$, is given by  $$J_\varepsilon(u) = \frac{1}{p}[u]^p + \frac{1}{p} \int_{\mathbb{R}^N} V(\varepsilon x)|u|^p dx - \frac{\beta}{q} \int_{\mathbb{R}^N} |u|^q dx.$$
	Our main objective here is to show that the functional $J_{\varepsilon}$ has at least one local minimizer with prescribed norm in the case $L^p$ subcritical. In particular, by using the Lagrange Multiplier Theorem \cite{Dra}, we shall prove that the local minimizer is a weak solution to the following nonlocal elliptic problem:
	\begin{equation}\label{Equi}\tag{$P_{m, \varepsilon}$}
		\left\{\begin{array}{cc}
			\displaystyle (-\Delta)^s_p u\ +\ V (\varepsilon x) |u|^{p-2}u\  = \lambda |u|^{p - 2}u + \beta\left|u\right|^{q-2}u\ \hbox{in}\ \mathbb{R}^N, \\
			\displaystyle \|u\|_p^p = m^p,\ u \in W^{s, p}(\mathbb{R}^N),\end{array}\right.
	\end{equation}
	where $\varepsilon > 0$ is small enough. In order to prove the Theorem \ref{TB3}  we consider some auxiliary functionals associated with $V_0$ and $V_{\infty}$ defined in $(V_4)$. More precisely, we consider $J_{0}:X \rightarrow \mathbb{R}$ and $J_{\infty}: X \rightarrow \mathbb{R}$ given by 
	\begin{eqnarray}J_0(u) &=& \frac{1}{p} [u]^p  + \frac{1}{p} \int_{\mathbb{R}^N} V_0 |u|^p dx - \frac{\beta}{q} \int_{\mathbb{R}^N} |u|^q dx\\
		\nonumber \allowdisplaybreaks J_{\infty}(u) &=& \frac{1}{p} [u]^p  + \frac{1}{p} \int_{\mathbb{R}^N} V_{\infty} |u|^p dx - \frac{\beta}{q} \int_{\mathbb{R}^N} |u|^q dx.
	\end{eqnarray}
	
	Now, using the functionals given just above, we consider also the following minimization problems
	\begin{eqnarray}\label{Pm0} \displaystyle \gamma_{m, 0} &=& \inf\{J_0(u) : u \in S_m\}\\
		\label{Pinf}\displaystyle \gamma_{m, \infty} &=& \inf\{J_{\infty}(u) : u \in S_m\} .\end{eqnarray} 
	It is important to stress that hypothesis $(V_4)$ together with $\|V\|_{\infty} < \delta$ imply that $V_0 < V_{\infty} < \delta$. Furthermore, by using Theorem \ref{TB1}, there exist  $u_0 \in S_m$ and $u_{\infty} \in S_m$ such that $J_0(u_0) = \gamma_{m, 0}$ and $J_{\infty} (u_{\infty}) = \gamma_{m, \infty}$.

	\begin{proposition}\label{R2} Assume that $V$ satisfies the condition $(V_4)$ and $\beta > 0$. Then $\gamma_{m, 0} < \gamma_{m, \infty} < 0$. 
	\end{proposition}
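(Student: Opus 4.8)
The plan is to split the claim into two independent pieces: the strict ordering $\gamma_{m,0} < \gamma_{m,\infty}$, which is essentially an algebraic identity because $J_0$ and $J_\infty$ differ on $S_m$ only by an additive constant, and the negativity $\gamma_{m,\infty} < 0$, which follows by applying Lemma~\ref{neg} to the constant potential $V_\infty$.

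First I would note that for every $u \in S_m$ one has $\int_{\mathbb{R}^N}|u|^p\,dx = m^p$, so that
$$J_0(u) - J_\infty(u) = \frac{1}{p}\int_{\mathbb{R}^N}(V_0 - V_\infty)|u|^p\,dx = \frac{(V_0 - V_\infty)m^p}{p},$$
a quantity independent of $u$. Taking the infimum over $S_m$ in the identity $J_0(u) = J_\infty(u) + (V_0 - V_\infty)m^p/p$ gives
$$\gamma_{m,0} = \gamma_{m,\infty} + \frac{(V_0 - V_\infty)m^p}{p}.$$
Since $(V_4)$ yields $V_0 < V_\infty$, the added term is strictly negative and hence $\gamma_{m,0} < \gamma_{m,\infty}$. (Equivalently, using the minimizer $u_\infty \in S_m$ of $J_\infty$ furnished by Theorem~\ref{TB1}, one has $\gamma_{m,0} \le J_0(u_\infty) = J_\infty(u_\infty) + (V_0 - V_\infty)m^p/p = \gamma_{m,\infty} + (V_0 - V_\infty)m^p/p < \gamma_{m,\infty}$.)

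Next I would establish $\gamma_{m,\infty} < 0$ by invoking Lemma~\ref{neg} with the constant potential $W \equiv V_\infty$. Observe that $(V_4)$ forces $V_\infty > 0$, so $W \in L^{\infty}(\mathbb{R}^N)$, $W \geq 0$, $W \not\equiv 0$; moreover $\|W\|_{\infty} = V_\infty \le \|V\|_{\infty} < \delta(m)$ under the smallness assumption in force here, and $q \in (p, p + sp^2/N)$. Therefore Lemma~\ref{neg} applies to the functional $J_\infty$ and gives $\gamma_{m,\infty} = \inf_{u \in S_m} J_\infty(u) < 0$. Combining this with the previous step yields $\gamma_{m,0} < \gamma_{m,\infty} < 0$, which is the desired conclusion.

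The reasoning is elementary; the only point requiring attention is that Lemma~\ref{neg} needs a smallness condition on the sup-norm of the potential, so one must check that $V_\infty < \delta(m)$, which is immediate from $V_\infty \le \|V\|_{\infty}$ together with the standing hypothesis $\|V\|_{\infty} < \delta(m)$. Note that the strict inequality $\gamma_{m,0} < \gamma_{m,\infty}$ itself holds unconditionally, directly from the identity above.
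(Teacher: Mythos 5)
Your proof is correct and rests on the same essential observation as the paper's: on $S_m$ the potential terms of $J_0$ and $J_\infty$ are the constants $V_0m^p/p$ and $V_\infty m^p/p$, so the two functionals differ by the fixed negative quantity $(V_0-V_\infty)m^p/p$; the paper reaches the same conclusion slightly more laboriously, via the minimizers $u_0,u_\infty$ from Theorem~\ref{TB1} and a small contradiction argument, whereas your infimum identity gives it in one line without even needing the minimizers to exist. You also do something the paper's written proof omits entirely: you actually prove $\gamma_{m,\infty}<0$ by applying Lemma~\ref{neg} to the constant potential $V_\infty$ (checking $0<V_\infty\le\|V\|_\infty<\delta(m)$), while the paper leaves this half of the stated conclusion implicit in the remark preceding the proposition, so your version is the more complete one.
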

	\begin{proof}Let $u_0, u_{\infty} \in S_m$ be two functions given by Theorem \ref{TB1} in such way that $\gamma_{m, 0} = J_{0}(u_0)$ and $\gamma_{m, \infty} = J_{\infty}(u_{\infty})$. In particular, we obtain that
		\begin{eqnarray}\nonumber \displaystyle \gamma_{m, 0} &=& J_{0}(u_0) = \dfrac{1}{p}[u_0]^p + \dfrac{V_0 m^p}{p} - \dfrac{\beta}{q}\int_{\mathbb{R}^N}|u_0|^q dx\\
			\nonumber \displaystyle \gamma_{m, \infty} &=& J_{\infty}(u_\infty) = \dfrac{1}{p}[u_\infty]^p + \dfrac{V_\infty m^p}{p} - \dfrac{\beta}{q}\int_{\mathbb{R}^N}|u_\infty|^q dx.
		\end{eqnarray}
		In order to show that $\gamma_{m, 0} < \gamma_{m, \infty}$ is sufficient to ensure that  $$\dfrac{1}{p}[u_0]^p - \dfrac{\beta}{q}\int_{\mathbb{R}^N}|u_0|^q dx \leq \dfrac{1}{p}[u_\infty]^p - \dfrac{\beta}{q}\int_{\mathbb{R}^N}|u_\infty|^q dx.$$
		The proof for the last assertion follows arguing by contradiction. Let us assume that $$\dfrac{1}{p}[u_0]^p - \dfrac{\beta}{q}\int_{\mathbb{R}^N}|u_0|^q dx > \dfrac{1}{p}[u_\infty]^p - \dfrac{\beta}{q}\int_{\mathbb{R}^N}|u_\infty|^q dx.$$
		Hence, using the term $V_0 m^p/p$ in both sides of the inequality just above, we can see that $J_{0}(u_0) > J_{0}(u_{\infty})$. On the other hand, we observe that $u_{\infty} \in S_m$. Under these conditions and taking into account the fact that $\displaystyle \gamma_{m, 0} = \inf\{J_0(u): u \in S_m\} =J_{0}(u_0)$ we obtain a contradiction. This finishes the proof.
	\end{proof}

	\begin{lemma}\label{L6}Assume that $V$ satisfies $(V_3),$  $(V_4)$ and $\beta > 0$. Then, there exists $\varepsilon_0 > 0$ such that $\gamma_{m, \varepsilon} < \gamma_{m, \infty}$ for all $\varepsilon \in (0, \varepsilon_0)$.
	\end{lemma}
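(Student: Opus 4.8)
The statement compares $\gamma_{m,\varepsilon}$, the infimum of $J_\varepsilon$ on $S_m$, with $\gamma_{m,\infty}$, the infimum of the constant-potential functional $J_\infty$. Since $V_\infty = \liminf_{|x|\to\infty} V(x)$ by $(V_4)$, for fixed large $\varepsilon$ the rescaled potential $V(\varepsilon\,\cdot)$ "feels" values of $V$ far out, which are close to $V_\infty$ — so morally $J_\varepsilon \approx J_\infty$ for large $\varepsilon$. But the inequality we want is strict and in the direction $\gamma_{m,\varepsilon} < \gamma_{m,\infty}$, i.e. $J_\varepsilon$ should do \emph{strictly better}; this forces us to use the other half of $(V_4)$, namely $V_0 = \inf_{\mathbb{R}^N} V < V_\infty$, together with Proposition \ref{R2} which already gives $\gamma_{m,0} < \gamma_{m,\infty} < 0$. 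So the idea is: produce a test function, built from the minimizer $u_0 \in S_m$ of $J_0$ provided by Theorem \ref{TB1} (with $J_0(u_0)=\gamma_{m,0}$), and show that for $\varepsilon$ small its $J_\varepsilon$-energy is close to $\gamma_{m,0}$, hence $< \gamma_{m,\infty}$.

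\textbf{Key steps.} First I would pick $x_0 \in \mathbb{R}^N$ with $V(x_0)$ as close to $V_0$ as desired — or, if the infimum is attained, exactly $V(x_0)=V_0$ — using continuity of $V$ from $(V_4)$. Then for the minimizer $u_0 \in S_m$ of $J_0$ I would set the translated-and-rescaled test function $w_\varepsilon(x) := u_0\!\left(x - x_0/\varepsilon\right)$; note $w_\varepsilon \in S_m$ and $[w_\varepsilon] = [u_0]$, $\int |w_\varepsilon|^q = \int|u_0|^q$, since translations are isometries for both the Gagliardo seminorm and the $L^q$ norm. The only term that changes is the potential term:
\begin{equation*}
\int_{\mathbb{R}^N} V(\varepsilon x)\,|w_\varepsilon(x)|^p\,dx = \int_{\mathbb{R}^N} V\!\left(\varepsilon y + x_0\right)|u_0(y)|^p\,dy.
\end{equation*}
As $\varepsilon \to 0^+$, $V(\varepsilon y + x_0) \to V(x_0)$ pointwise, and since $V \in L^\infty$ with $|u_0|^p \in L^1(\mathbb{R}^N)$, dominated convergence gives
$\int_{\mathbb{R}^N} V(\varepsilon y + x_0)|u_0(y)|^p\,dy \to V(x_0)\, m^p$. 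Consequently
\begin{equation*}
\gamma_{m,\varepsilon} \le J_\varepsilon(w_\varepsilon) = \tfrac1p[u_0]^p - \tfrac{\beta}{q}\int_{\mathbb{R}^N}|u_0|^q\,dx + \tfrac1p\int_{\mathbb{R}^N}V(\varepsilon y + x_0)|u_0|^p\,dy \;\xrightarrow[\varepsilon\to 0^+]{}\; \tfrac1p[u_0]^p - \tfrac{\beta}{q}\int_{\mathbb{R}^N}|u_0|^q\,dx + \tfrac{V(x_0)}{p}m^p.
\end{equation*}
By choosing $x_0$ so that $V(x_0)$ is close enough to $V_0$, the right-hand side is $\le \gamma_{m,0} + \eta$ for any prescribed $\eta > 0$; since $\gamma_{m,0} < \gamma_{m,\infty}$ strictly by Proposition \ref{R2}, fixing $\eta = (\gamma_{m,\infty} - \gamma_{m,0})/2 > 0$ and then taking $\varepsilon_0$ small enough that $J_\varepsilon(w_\varepsilon) < \gamma_{m,\infty}$ for all $\varepsilon \in (0,\varepsilon_0)$ finishes the argument.

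\textbf{Main obstacle.} The routine parts — translation invariance of $[\cdot]$ and of $\|\cdot\|_q$, membership $w_\varepsilon \in S_m$, the dominated convergence — are genuinely routine. The one point that needs care is that $w_\varepsilon$ must actually lie in the space $X$, i.e. $\int V(\varepsilon x)|w_\varepsilon|^p\,dx < \infty$; but $V \in L^\infty$ under $(V_4)$, so this is immediate and $X = W^{s,p}(\mathbb{R}^N)$ anyway. The only real subtlety is whether $V_0$ is attained: if $\inf_{\mathbb{R}^N}V$ is not achieved, one cannot take $V(x_0)=V_0$ exactly, which is why I would work with $V(x_0) \le V_0 + \eta$ and carry the $\eta$ through; the strict gap $\gamma_{m,\infty} - \gamma_{m,0} > 0$ coming from Proposition \ref{R2} is exactly what absorbs this slack. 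So I expect no serious difficulty, and I would present it as: build the translated minimizer of $J_0$, estimate $\gamma_{m,\varepsilon}$ from above by dominated convergence, and conclude via Proposition \ref{R2}.
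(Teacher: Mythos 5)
Your proposal is correct and follows essentially the same route as the paper: translate the minimizer $u_0$ of $J_0$ by $x_0/\varepsilon$, use translation invariance of the Gagliardo seminorm and the $L^q$ norm, pass to the limit in the potential term by dominated convergence, and conclude from the strict gap $\gamma_{m,0}<\gamma_{m,\infty}$ of Proposition \ref{R2}. (The paper simply takes $x_0$ with $V(x_0)=V_0$ — which is legitimate here since $(V_4)$ forces the infimum of the continuous $V$ to be attained on a compact set — whereas your $\eta$-approximation handles the same point slightly more cautiously; either way the argument goes through.)
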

	\begin{proof} Let $x_0 \in \mathbb{R}^N$ be a fixed point in such way that $\displaystyle V(x_0) = \inf_{x \in \mathbb{R}^N} V(x)$. Define the function $v_{\varepsilon}(x) = u_0(x - \frac{x_0}{\varepsilon})$. Now, by using the change of variables $y = x - \frac{x_0}{\varepsilon}$, we obtain that
		$$\int_{\mathbb{R}^N} |v_\varepsilon(x)|^p dx = \int_{\mathbb{R}^N} |u_0(y)|^p dy = m^p.$$
		The last assertion implies that $v_{\varepsilon} \in S_m$. As a consequence, we see that $\gamma_{m, \varepsilon} \le J_{\varepsilon}(v_{\varepsilon}(x))$. Therefore, we obtain that 
		$$\displaystyle \gamma_{m, \varepsilon} \le \frac{1}{p}\int_{\mathbb{R}^N}\int_{\mathbb{R}^N} \frac{|v_\varepsilon (x) - v_\varepsilon (y)|^p}{|x - y|^{N + sp}} dx dy + \frac{1}{p}\int_{\mathbb{R}^N} V(\varepsilon x) |v_\varepsilon (x)|^p dx - \frac{\beta}{q} \int_{\mathbb{R}^N} |v_\varepsilon (x)|^q dx.$$
		Once again we consider the change of variables $w = x - \frac{x_0}{\varepsilon}$ and $z = y - \frac{x_0}{\varepsilon}$. It is not hard to see that
		$$\gamma_{m, \varepsilon} \le \frac{1}{p}[u_0]^p + \frac{1}{p}\int_{\mathbb{R}^N} V(\varepsilon w + x_0) |u_0|^p dw - \frac{\beta}{q} \int_{\mathbb{R}^N} |u_0|^q dw.$$
		Now, by using the Dominate Convergence Theorem together with $V_0 = V(x_0)$, we deduce that
		$$\gamma_{m, \varepsilon} \le \frac{1}{p}[u_0]^p + \lim_{\varepsilon \to 0} \frac{1}{p}\int_{\mathbb{R}^N} V(\varepsilon w + x_0) |u_0|^p dw - \frac{\beta}{q} \int_{\mathbb{R}^N} |u_0|^q dw = J_{0}(u_0) = \gamma_{m, 0}.$$
		In particular, we see that $ \gamma_{m, \varepsilon} \le \gamma_{m, 0}$. Now, by using the Proposition \ref{R2}, there exists $\varepsilon_0 > 0$ such that $\gamma_{m, \varepsilon} < \gamma_{m, \infty}$ for all $\varepsilon \in (0, \varepsilon_0)$. This ends the proof.
	\end{proof}
	At this stage, we consider a minimizing sequence $(u_k) \subset S_m$ for the minimization Problem \eqref{gammameps}, that is, we have $\displaystyle \lim_{k \to \infty} J_\varepsilon(u_k) = \gamma_{m, \varepsilon}$. Under these conditions, by using a similar argument as was done in the proof of  Proposition \ref{lim}, we obtain that $(u_k)$ is bounded in $X$. Thus, up to a subsequence, there exists a function $u \in X$ such that $u_k \rightharpoonup u$ in $X$ and $u_k \to u$ a. e. in $\mathbb{R}^N$. As a product, we can prove the following result:
	
	\begin{lemma}\label{L7} Suppose that $V$ satisfies $(V_3)$ and $(V_4)$, $\beta > 0$. Assume also that $\varepsilon \in (0, \varepsilon_0)$ where $\varepsilon_0$ is small enough. Let $(u_k)$ be a minimizing sequence for the minimization Problem \eqref{gammameps} in such way that $u_k \rightharpoonup u$ in $X$. Then $u \not\equiv 0$.    
	\end{lemma}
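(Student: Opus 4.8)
The plan is to mimic the structure of Lemma~\ref{L4} (and of Lemma~\ref{nontrivial}): argue by contradiction, and show that if the minimizing mass escaped to infinity then the configuration would "see" a potential asymptotically bounded below by $V_\infty$, so its energy could not beat $\gamma_{m,\infty}$; this will clash with the strict inequality $\gamma_{m,\varepsilon}<\gamma_{m,\infty}$ furnished by Lemma~\ref{L6}.

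So suppose, for contradiction, that $u\equiv 0$. First I would record the elementary identity, valid for every $k$,
$$J_\varepsilon(u_k) = J_\infty(u_k) + \frac{1}{p}\int_{\mathbb{R}^N}\bigl[V(\varepsilon x) - V_\infty\bigr]|u_k|^p\,dx,$$
which reduces the matter to bounding the last integral from below, since $u_k\in S_m$ already gives $J_\infty(u_k)\geq \gamma_{m,\infty}$ for all $k$. The goal is therefore
$$\liminf_{k\to\infty}\int_{\mathbb{R}^N}\bigl[V(\varepsilon x) - V_\infty\bigr]|u_k|^p\,dx \geq 0.$$
To prove this, fix $\eta>0$. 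By $(V_4)$ we have $V_\infty=\liminf_{|x|\to\infty}V(x)$, so there is $R=R(\eta)>0$ with $V(x)\geq V_\infty-\eta$ for $|x|\geq R$, hence $V(\varepsilon x)\geq V_\infty-\eta$ for $|x|\geq R/\varepsilon$. Splitting $\mathbb{R}^N$ into $B_{R/\varepsilon}(0)$ and its complement, on the complement $V(\varepsilon x)-V_\infty\geq-\eta$ yields
$$\int_{B_{R/\varepsilon}(0)^c}\bigl[V(\varepsilon x) - V_\infty\bigr]|u_k|^p\,dx \geq -\eta\,\|u_k\|_p^p = -\eta\,m^p,$$
while on the bounded ball $B_{R/\varepsilon}(0)$ the sequence $(u_k)$ is bounded in $X\subset W^{s,p}(\mathbb{R}^N)$ and $u_k\rightharpoonup 0$, so by the compact embedding $W^{s,p}(B_{R/\varepsilon}(0))\hookrightarrow\hookrightarrow L^p(B_{R/\varepsilon}(0))$ we get $u_k\to 0$ in $L^p(B_{R/\varepsilon}(0))$; since $V\in L^\infty(\mathbb{R}^N)$, the corresponding integral is $o_k(1)$. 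Adding the two pieces and letting $\eta\downarrow 0$ gives the claimed inequality.

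Consequently $\gamma_{m,\varepsilon}=\lim_k J_\varepsilon(u_k)\geq \liminf_k J_\infty(u_k)+\liminf_k \tfrac{1}{p}\int_{\mathbb{R}^N}[V(\varepsilon x)-V_\infty]|u_k|^p\,dx\geq \gamma_{m,\infty}$, contradicting Lemma~\ref{L6}; hence $u\not\equiv 0$. The one delicate point is the lower bound on the potential term: it rests precisely on using the $\liminf$ (not a genuine limit) in $(V_4)$ together with $L^p_{loc}$ compactness to annihilate the contribution on bounded sets — everything else is bookkeeping. It is also worth noting that the restriction $\varepsilon\in(0,\varepsilon_0)$ enters here only through Lemma~\ref{L6}, which is where the strict inequality $\gamma_{m,\varepsilon}<\gamma_{m,\infty}$ is produced; no further smallness of $\varepsilon$ is required in the present argument.
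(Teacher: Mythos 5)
Your proof is correct and follows essentially the same route as the paper's: the identity $J_\varepsilon(u_k)=J_\infty(u_k)+\frac{1}{p}\int[V(\varepsilon x)-V_\infty]|u_k|^p\,dx$, the radius $R$ from $(V_4)$ scaled to $R/\varepsilon$, the compact embedding on the bounded ball to kill that piece, the $-\eta m^p$ lower bound outside, and the final contradiction with Lemma \ref{L6}. The only differences are cosmetic (the paper writes $\zeta$ for your $\eta$ and bounds $\|u_k\|_p^p$ by a constant $C$ rather than using $m^p$ exactly).
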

	
	\begin{proof}
		Once again the proof follows arguing by contradiction. Let us assume that $u_k \rightharpoonup 0$ in $X$. Hence, we obtain that 
		$$\gamma_{m, \varepsilon} + o_k(1) = J_{\varepsilon}(u_k) = \frac{1}{p}[u_k]^p + \frac{1}{p} \int_{\mathbb{R}^N} V(\varepsilon x) |u_k|^p dx - \frac{\beta}{q} \int_{\mathbb{R}^N} |u_k|^q dx.$$
		As a product, we obtain that
		\begin{equation}\nonumber  \gamma_{m, \varepsilon} + o_k(1) = J_{\infty}(u_k) + \frac{1}{p} \int_{\mathbb{R}^N} ( V(\varepsilon x) - V_{\infty})|u_k|^p dx.
		\end{equation}
		Now, by using the hypothesis $(V_4)$, we obtain that for all $ \zeta > 0$ there exists a real value $R > 0$ such that $V(x) \geq V_{\infty} - \zeta$, $x \in B^c_{R}(0)$. As a consequence, we obtain that
		\begin{eqnarray}\nonumber \displaystyle \gamma_{m, \varepsilon} + o_k(1) = J_{\varepsilon}(u_k) &=& J_{\infty}(u_k) + \int_{\mathbb{R}^N} (V(\varepsilon x) - V_{\infty})|u_k|^p dx\\
			\nonumber
			\displaystyle  &=& J_{\infty}(u_k) + \int_{B_{\frac{R}{\varepsilon}}(0)} (V(\varepsilon x) - V_{\infty})|u_k|^p dx\
			+ \int_{B^c_{\frac{R}{\varepsilon}}(0)} (V(\varepsilon x) - V_{\infty})|u_k|^p dx \\
			\label{e7} \displaystyle  &\geq& J_{\infty} (u_k) + \int_{B_{\frac{R}{\varepsilon}}(0)} (V(\varepsilon x) - V_{\infty})|u_k|^p dx - \zeta \int_{B^c_{\frac{R}{\varepsilon}}(0)} |u_k|^p dx
		\end{eqnarray}
		Recall also that the sequence $(u_k)$ is bounded in $X$. Indeed, $(u_k)$ is minimizing sequence for the Problem \eqref{gammameps} where $u_k \rightharpoonup u$ in $X$. It is important observe that $X$ is compactly embedded into $L^p(B_{\frac{R}{\varepsilon}}(0))$. Here, we emphasize that $\varepsilon$ is a fixed parameter. Furthermore, $u_k \to u$ in $L^p(B_{\frac{R}{\varepsilon}}(0))$. Therefore, by using the Dominated Convergence Theorem, we see that
		\begin{equation}\label{e8}\displaystyle \lim_{k \to \infty} \int_{B_{\frac{R}{\varepsilon}}(0)} (V(\varepsilon x) - V_{\infty})|u_k|^p dx = 0. \end{equation}
		On the other hand, we observe that 
		\begin{equation}\label{e9}
			-\zeta \int_{B^c_{\frac{R}{\varepsilon}}(0)} |u_k|^p dx \geq -\zeta C.
		\end{equation}
		
Now, taking into account \eqref{e7}, \eqref{e8} and \eqref{e9}, we conclude that
		$$\gamma_{m, \varepsilon} + o_k(1) \geq J_{\infty} (u_k) - \zeta C \geq \gamma_{m, \infty} - \zeta C.$$
		Furthermore, by using the fact that $\zeta$ is arbitrary, we obtain $\gamma_{m, \varepsilon} \geq \gamma_{m, \infty}$. The last inequality is a contradiction with the Lemma \ref{L6}. Hence, $u \not\equiv 0$ is now satisfied. This ends the proof.
	\end{proof}
	
	\noindent\textit{Proof of Theorem \ref{TB3}}. Let $(u_k) \subset S_m$ be a minimizing sequence for the minimization Problem \eqref{gammameps}. Hence, by using Proposition \ref{lim} and Lemma \ref{L7}, we have that $(u_k)$ is bounded and $u_k \rightharpoonup u$ in $X$ for some $u \not\equiv 0, u \in X$. Therefore, by using Lemma \ref{6}, we concluded that $u_k \to u$ in $X$ and $J_\varepsilon(u) = \gamma_{m, \varepsilon}$. Consequently, $u \in S_m$ is a local minimizer for the functional $J_{\varepsilon}$ restricted to $S_m$. Using similar ideas discussed in the proof of Theorem \ref{TB1}, $u > 0$ and there exists $\lambda$ such that
	$$J'_{\varepsilon}(u)\varphi = \lambda \Psi'(u)\varphi, \forall \varphi \in X.$$
	Therefore, $u$ is a weak solution for the Problem \eqref{Equi}. This completes the proof. \hfill\cqd
	
	\section{ $L^p$-critical growth: existence of weak solution }
	In this section we shall prove the existence of a solution for the following minimization problem:
	\begin{equation}\label{minimization3}
		\gamma_m = \inf\{J(u): u \in S_m\}
	\end{equation} 
	where $q = p + sp^2/N.$
	Firstly, we shall ensure that the solution to the minimization Problem \eqref{minimization3} is a weak solution to the nonlocal elliptic Problem \eqref{P}.
	Recall that $$J(u) = \dfrac{1}{p}[u]^p + \dfrac{1}{p}\int_{\mathbb{R}^N} V(x)|u|^p dx - \dfrac{\beta}{q}\int_{\mathbb{R}^N}|u|^q dx, u \in X.$$ 
	
	Now, by using the same ideas discussed in Section \ref{SB1}, we consider the fibering map $u_t(x) = t^{N/p}u(tx), t > 0, x \in \mathbb{R}^N$. It is easy to see that
	$$J(u_t) = \dfrac{t^{sp}}{p}[u]^p + \dfrac{1}{p}\int_{\mathbb{R}^N}V\left(\frac{x}{t}\right)|u|^p dx - \dfrac{\beta t^{\frac{N(q-p)}{p}}}{q}\int_{\mathbb{R}^N}|u|^q dx.$$
	
	\begin{remark}\label{igual} In the $L^p$ critical case we have that $q = p + sp^2/N$. In particular, the last identity implies that $sp = (q - p)N/p$.    
	\end{remark}
	Now, we shall prove that for each $\beta > 0$ small enough, the energy functional $J$ is bounded from below. More specifically, we obtain the following result.
	\begin{lemma}\label{limit} Suppose that $V$ satisfies $(V_3)$ , $(V_5)$ and $V \geq 0$. Then, there exists $\beta_0>0$ such that $J(u)>0$ for every $\beta\in (0,\beta_0)$ and $u\in S_m$. In particular, we obtain that $ \gamma_m \geq 0$.   
	\end{lemma}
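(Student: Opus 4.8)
The goal is to show that in the $L^p$-critical case $q = p + sp^2/N$, there exists $\beta_0 > 0$ such that $J(u) > 0$ for all $u \in S_m$ whenever $\beta \in (0,\beta_0)$. First I would invoke hypothesis $(V_3)$ together with the equivalence of the norms $\|\cdot\|$ and $\|\cdot\|_*$ (Proposition \ref{ob11}) to control the quadratic-type part of $J$ from below: for $u \in S_m$,
\[
J(u) \ge \frac{1}{p}[u]^p - \frac{\beta}{q}\int_{\mathbb{R}^N}|u|^q\,dx.
\]
So it suffices to estimate the last integral by a constant times $[u]^p$ with a factor that can be absorbed. The natural tool is the Gagliardo--Nirenberg inequality \eqref{B1}, which gives $\|u\|_q^q \le C\|u\|_p^{(1-t)q}[u]^{tq}$ with $t = N(\tfrac{1}{sp} - \tfrac{1}{sq})$. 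The whole point of the $L^p$-critical exponent, recorded in Remark \ref{igual}, is precisely that $tq = p$ in this case (equivalently $sp = (q-p)N/p$), so the exponent of $[u]^p$ matches exactly.

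Carrying this out, for $u \in S_m$ one has $\|u\|_p^p = m^p$, hence
\[
J(u) \ge \frac{1}{p}[u]^p - \frac{\beta C}{q}\, m^{(1-t)q}\, [u]^{p}
 = \left(\frac{1}{p} - \frac{\beta C m^{(1-t)q}}{q}\right)[u]^p.
\]
Choosing $\beta_0 = \beta_0(N,s,p,m) > 0$ so that $\tfrac{\beta_0 C m^{(1-t)q}}{q} < \tfrac{1}{p}$, i.e.\ $\beta_0 = q/(p C m^{(1-t)q})$, we obtain $J(u) \ge 0$ for all $u \in S_m$ and $\beta \in (0,\beta_0)$, with strict positivity as soon as $[u] > 0$. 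The remaining subtlety is the degenerate case $[u] = 0$: I would note that $u \in W^{s,p}(\mathbb{R}^N)$ with $[u] = 0$ forces $u$ to be constant, and the only constant in $W^{s,p}(\mathbb{R}^N)$ (indeed in $L^p$) is $u \equiv 0$, which is incompatible with $\|u\|_p^p = m^p > 0$. Therefore $[u] > 0$ on $S_m$ and $J(u) > 0$ strictly. Taking the infimum yields $\gamma_m \ge 0$.

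**Main obstacle.** There is no deep obstacle here; the argument is essentially an application of the sharp Gagliardo--Nirenberg inequality \eqref{B1} at the critical exponent where the scaling balances. The one point requiring a little care is making sure the constant $C$ in \eqref{B1} is genuinely independent of $u$ (it is, being the Gagliardo--Nirenberg constant from Proposition \ref{gagl} combined with interpolation), so that $\beta_0$ depends only on $N, s, p, m$ as claimed; and handling the trivial-seeming but necessary remark that $[u]$ cannot vanish on $S_m$, which is what upgrades $\gamma_m \ge 0$ from the weaker $\ge$ to the pointwise strict inequality $J(u) > 0$. One should also record for later use (it will matter when proving the existence of a minimizer in Theorem \ref{TB4}) that this lower bound shows minimizing sequences have $[u_k]$ bounded, hence are bounded in $X$ by $(V_3)$ and the norm equivalence.
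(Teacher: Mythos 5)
Your proof is correct and follows essentially the same route as the paper: apply the Gagliardo--Nirenberg/interpolation bound \eqref{B1}, observe that at the critical exponent $tq=p$ so the negative term scales like $[u]^p$, and absorb it for $\beta < \beta_0 = q/(pCm^{q-p})$. The only cosmetic difference is how strict positivity is secured: the paper keeps the potential term, bounds $[u]^p\le\|u\|^p$ in the negative part, and concludes $J(u)\ge\bigl(\tfrac1p-\tfrac{\beta Cm^{q-p}}{q}\bigr)\|u\|^p>0$ using that $\|u\|>0$ on $S_m$ (via $(V_3)$), whereas you discard the potential term and instead argue directly that $[u]>0$ on $S_m$; both are valid.
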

	\begin{proof} Let $u \in S_m$ be a fixed function. Now, by using interpolation law for the spaces $L^q(\mathbb{R}^N)$ together with the Gagliardo-Nirenberg inequality, we obtain that
		\begin{eqnarray}\nonumber \displaystyle J(u) &\geq& \dfrac{1}{p}[u]^p + \dfrac{1}{p}\int_{\mathbb{R}^N} V(x)|u|^p dx - \dfrac{\beta C m^{q-p}}{q}[u]^p \\
			&\geq& \dfrac{1}{p}[u]^p + \dfrac{1}{p}\int_{\mathbb{R}^N} V(x)|u|^p dx - \dfrac{\beta C m^{q-p}}{q}\|u\|^p \nonumber \\
			\nonumber \displaystyle &=& \left(\dfrac{1}{p} - \dfrac{\beta C m^{q-p}}{q}\right)\|u\|^p > 0.
		\end{eqnarray}
		Here we mention that the last inequality is satisfied for each  $$\beta \in \left(0, \dfrac{q}{pCm^{q - p}}\right).$$ Since $q = p+sp^2/N$ we consider $$\beta_0 = \dfrac{N + sp}{pCm^{\frac{sp^2}{N}}}.$$ This completes the proof.
	\end{proof}

	\begin{remark}\label{ob1} In view of Lemma \ref{limit} we obtain  that 
		$$J(u) \geq \left(\dfrac{1}{p} - \frac{\beta C m^{q-p}}{q}\right)\|u\|^p > 0$$ holds for each $\beta \in (0, \beta_0)$. Thus, given a minimizing sequence $(u_k) \subset S_m$ for the minimization Problem \eqref{minimization3}, that is, $\displaystyle \lim_{k \to \infty} J(u_k) = \gamma_m$, we obtain that $(u_k)$ is bounded in $X$.   
	\end{remark}
	
	In light of hypothesis $(V_5)$ we are able to establish the following result:
	
	\begin{proposition}\label{compa}
		Suppose that $(V_3)$ and $(V_5)$ are satisfied. Then the embedding $X\hookrightarrow L^{t}(\mathbb{R}^N)$, $t \in [p, p^*_s)$ is compact.
	\end{proposition}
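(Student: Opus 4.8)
The plan is to follow the Bartsch--Wang scheme: the condition $(V_5)$ forces the mass of a bounded sequence to concentrate on sets of finite Lebesgue measure, and on such sets Hölder's inequality together with the continuous embedding $X\hookrightarrow L^{p^*_s}(\mathbb{R}^N)$ — available thanks to $(V_3)$, Proposition \ref{gagl} and the equivalence of norms in Proposition \ref{ob11} — upgrades local compactness into global compactness. First I would take a bounded sequence $(u_k)\subset X$; by Proposition \ref{reflexive} we may assume $u_k\rightharpoonup u$ in $X$, hence $u_k\rightharpoonup u$ in $W^{s,p}(\mathbb{R}^N)$, so that $u_k\to u$ in $L^p_{loc}(\mathbb{R}^N)$ and $u_k\to u$ a.e. Replacing $u_k$ by $v_k=u_k-u$, it suffices to prove that if $(v_k)$ is bounded in $X$ and $v_k\rightharpoonup 0$ in $X$, then $v_k\to 0$ in $L^t(\mathbb{R}^N)$ for every $t\in[p,p^*_s)$. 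Note that $v_k\to 0$ in $L^p_{loc}(\mathbb{R}^N)$, $v_k\to 0$ a.e., and, by $(V_3)$ with Proposition \ref{gagl}, $\|v_k\|_{p^*_s}\le C\|v_k\|\le C'$ for all $k$.

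The core step is the case $t=p$. Fix $M>0$ and set $A_M=\{x\in\mathbb{R}^N:V(x)\le M\}$, which has finite measure by $(V_5)$. On $A_M^c$ one has $V(x)>M$, hence
\[
\int_{A_M^c}|v_k|^p\,dx\le\frac1M\int_{A_M^c}V(x)|v_k|^p\,dx\le\frac1M\|v_k\|^p\le\frac{C}{M}.
\]
On $A_M$ I would split over a large ball $B_R(0)$: the integral over $A_M\cap B_R(0)$ tends to $0$ as $k\to\infty$ since $v_k\to0$ in $L^p_{loc}$, while for the tail Hölder's inequality with exponents $p^*_s/p$ and its conjugate gives
\[
\int_{A_M\setminus B_R(0)}|v_k|^p\,dx\le\|v_k\|_{p^*_s}^p\,\big(\mu(A_M\setminus B_R(0))\big)^{1-\frac{p}{p^*_s}}\le(C')^p\,\big(\mu(A_M\setminus B_R(0))\big)^{1-\frac{p}{p^*_s}},
\]
and the right-hand side tends to $0$ as $R\to\infty$ because $\mu(A_M)<\infty$ (continuity of the measure from above). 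Thus, given $\varepsilon>0$, I would first choose $M$ with $C/M<\varepsilon$, then choose $R$ so the tail over $A_M\setminus B_R(0)$ is $<\varepsilon$ uniformly in $k$, and finally take $k$ large so the local piece is $<\varepsilon$; altogether this yields $\int_{\mathbb{R}^N}|v_k|^p\,dx\to0$.

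For a general exponent $t\in(p,p^*_s)$ I would interpolate: writing $\tfrac1t=\tfrac{\vartheta}{p}+\tfrac{1-\vartheta}{p^*_s}$ with $\vartheta\in(0,1)$, one gets $\|v_k\|_t\le\|v_k\|_p^{\vartheta}\|v_k\|_{p^*_s}^{1-\vartheta}\le(C')^{1-\vartheta}\|v_k\|_p^{\vartheta}\to0$. Undoing the substitution $v_k=u_k-u$ then gives $u_k\to u$ in $L^t(\mathbb{R}^N)$ for every $t\in[p,p^*_s)$, which is exactly the asserted compactness. The main obstacle — indeed the only place the hypotheses are genuinely used — is the uniform-in-$k$ control of the mass at infinity: on $\{V>M\}$ it is paid for by the potential contribution to $\|\cdot\|$, and on the finite-measure set $\{V\le M\}$ it is paid for by the $L^{p^*_s}$-bound through Hölder's inequality, so the delicate point is simply to arrange the three parameters $M$, $R$, $k$ in the correct order so that all estimates are genuinely uniform.
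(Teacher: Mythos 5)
Your argument is correct and follows essentially the same route as the paper's proof: the three regions $\{V>M\}$ (controlled by the potential term in $\|\cdot\|$ via $(V_5)$'s complementary set being of finite measure), $\{V\le M\}\setminus B_R(0)$ (controlled by H\"older with exponent $p^*_s/p$ and the finiteness of $\mu(\{V\le M\})$), and the ball $B_R(0)$ (local compactness) are exactly the paper's decomposition, merely split in a different order, and the interpolation step for $t\in(p,p^*_s)$ is the same. No gaps.
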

	
	\begin{proof}
		Firstly, we observe that the embedding $X\hookrightarrow L^{t}(\mathbb{R}^N)$, $t \in [p, p^*_s)$ is continuous due to the fact that $(V_3)$ is verified. In fact, by using $(V_3)$, we deduce that
		\begin{equation}
			\|u\|_p^p \leq \dfrac{1}{\sigma} \|u\|^p, u \in X.
		\end{equation}
		Let $(u_k) \subset X$ be a sequence satisfying $u_k \rightharpoonup u$ in $X$ for some $u \in X$. In particular, we know that $(u_k)$ is a bounded sequence in $X$. Here is sufficient to ensure that $||u_k - u||_t \to 0$ for all $t \in [p, p^*_s)$. Initially, we shall consider the case $t = p$. Recall that 
		\begin{equation}\label{comp}
			\|u_k - u\|_p^p = \int_{B^c_R(0)} |u_k - u|^p dx + \int_{B_R(0)}|u_k - u|^p dx.
		\end{equation}		
		Now, due to the compact embedding $X\hookrightarrow L^{p}(B_R(0))$, see for instance \cite{neza}, we see that $$\displaystyle \lim_{k \to \infty} \int_{B_R(0)}|u_k - u|^p dx = 0.$$
		Furthermore, we claim that $$\displaystyle \lim_{k \to \infty}\int_{B^c_{R}(0)} |u_k - u|^p dx = 0.$$ 
		In order to prove the desired claim we consider some sets as follows:
		$$B^c_R(0) = \left[B^c_R(0) \cap \mathcal{A}_V\right] \cup \left[B^c_R(0) \cap \mathcal{A}_V^c\right],$$ where $$\mathcal{A}_V = \{x \in \mathbb{R}^N : V(x) \geq M\},\ \ \ \ \ \mbox{ for every } M>0.$$ 
		As a consequence, we obtain
		$$\displaystyle \int_{B^c_R(0)} |u_k - u|^p dx = \int_{B_R^c(0) \cap \mathcal{A}_V} |u_k - u|^p dx + \int_{B^c_R(0) \cap \mathcal{A}_V^c} |u_k - u|^p dx.$$
		Now, due to the fact that $M > 0$ is arbitrary and $u_k \rightharpoonup u$ in $X$, we infer that 
		\begin{equation}\label{c1}\displaystyle \int_{B^c_R(0) \cap \mathcal{A}_V} |u_k - u|^p dx \le \dfrac{1}{M}\int_{B^c_R(0) \cap \mathcal{A}_V} V(x)|u_k - u|^p dx \le\dfrac{1}{M}\|u_k - u\|^p < \dfrac{\varepsilon}{2}. \end{equation}
		On the other hand, using the Hölder inequality with exponents $r = (p^*_s)/(p^*_s - p)$ and $p^*_s/p$, we deduce that
		$$ \int_{B^c_R(0) \cap \mathcal{A}_V^c} |u_k - u|^p dx \leq \left[\mu(B^c_R(0) \cap \mathcal{A}_V^c)\right]^{\frac{1}{r}} C \|u_k - u\|_{p^*_s}^p \leq C_1 \left[\mu(B^c_R(0) \cap \mathcal{A}_V^c)\right]^{\frac{1}{r}} .$$
		Furthermore, we observe that  $\displaystyle \lim_{R \to \infty }\mu(B^c_R(0) \cap \mathcal{A}_V^c) = 0 $. Indeed, we infer that
		$$\displaystyle \lim_{R \to \infty}\mu(B^c_R(0) \cap \mathcal{A}_V^c) = \lim_{R \to \infty}\int_{B^c_R(0) \cap \mathcal{A}_V^c} dx \le \lim_{R \to \infty}\int_{\mathcal{A}_V^c} \mathcal{X}_{B^c_R(0) \cap \mathcal{A}^c_V} dx = 0. $$
		Here was used the Dominated Convergence Theorem together with the fact that $\mu(B_R^c(0) \cap \mathcal{A}_V^c) \leq \mu(\mathcal{A}_V^c) < \infty$. As a consequence, for every $\varepsilon > 0$, there exists $R = R(\varepsilon)$ in such way that
		\begin{equation}\label{c2} 
			\int_{B^c_R(0) \cap \mathcal{A}_V^c} |u_k - u|^pdx < \frac{\varepsilon}{2}.     
		\end{equation}
		In view of \eqref{c1} and \eqref{c2}, we obtain that
		$$\int_{B_R^c(0)} |u_k - u|^p dx = \int_{B_R^c(0) \cap \mathcal{A}_V} |u_k - u|^p dx + \int_{B^c_R(0) \cap \mathcal{A}_V^c} |u_k - u|^p dx < \varepsilon.$$
		It follows from \eqref{comp} that $$\lim_{k \to \infty}\|u_k - u\|_p^p = 0.$$ Now, by using the interpolation law for the spaces $L^t(\mathbb{R}^N), t \in [p, p^*_s)$, we conclude the desired result.
	\end{proof}

	\begin{lemma}\label{weak} Suppose $(V_3)$ and $(V_5)$. Let $(u_k) \subset S_m$ be a minimizing sequence for the minimization Problem \eqref{minimization3}. Then, up to a subsequence, $u_k \rightharpoonup u$ in $X$ where $u \in S_m$.
	\end{lemma}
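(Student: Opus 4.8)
The plan is to derive Lemma~\ref{weak} essentially as a direct consequence of two facts already in hand: the boundedness of any minimizing sequence, recorded in Remark~\ref{ob1}, and the compactness of the embedding $X \hookrightarrow L^{t}(\mathbb{R}^N)$ for $t \in [p,p^*_s)$, established in Proposition~\ref{compa}. First I would fix $\beta \in (0,\beta_0)$ with $\beta_0$ as in Lemma~\ref{limit}, so that $J > 0$ on $S_m$ and $\gamma_m \ge 0$; Remark~\ref{ob1} then gives that the minimizing sequence $(u_k) \subset S_m$ for \eqref{minimization3} is bounded in $X$. Since $X$ equipped with $\|\cdot\|$ is a reflexive Banach space by Proposition~\ref{reflexive}, I can pass to a subsequence, still denoted $(u_k)$, such that $u_k \rightharpoonup u$ in $X$ for some $u \in X$ (and, refining once more, $u_k \to u$ a.e. in $\mathbb{R}^N$).

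The second step is to upgrade this weak convergence at the level of the $L^p$-norm. Applying Proposition~\ref{compa} with $t = p$ --- which is legitimate since $(V_3)$ and $(V_5)$ are assumed --- the weak convergence $u_k \rightharpoonup u$ in $X$ forces $u_k \to u$ strongly in $L^p(\mathbb{R}^N)$. Hence
$$\|u\|_p^p = \lim_{k\to\infty}\|u_k\|_p^p = m^p,$$
so that $u \in S_m$; in particular $u \not\equiv 0$ because $m>0$. This is exactly the claim, and the weak limit $u$ produced in the first step is the one required by the statement.

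I do not anticipate a genuine obstacle in this lemma: the difficulty peculiar to the $L^p$-critical exponent $q = p + sp^2/N$ has already been dealt with in Lemma~\ref{limit}, where the smallness of $\beta$ is what keeps the coefficient $\tfrac1p - \tfrac{\beta C m^{q-p}}{q}$ positive and hence $J$ coercive on $S_m$, while the loss of compactness is handled once and for all by Proposition~\ref{compa}, whose proof exploits $(V_5)$ to control the mass of $u_k$ outside large balls. The only book-keeping point is to make sure the same subsequence realizes both the weak convergence in $X$ and the a.e./strong convergence in $L^p$, which is a standard diagonal extraction. It is also worth noting that, in contrast with the periodic and asymptotically periodic settings of the previous sections, no concentration/translation argument or Lions-type vanishing lemma is required here, precisely because hypothesis $(V_5)$ already yields a compact embedding directly.
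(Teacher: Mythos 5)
Your proposal is correct and follows essentially the same route as the paper: boundedness of the minimizing sequence via Remark \ref{ob1}, weak convergence by reflexivity, and then the compact embedding of Proposition \ref{compa} with $t=p$ to conclude $\|u\|_p^p=m^p$. The only cosmetic difference is that the paper passes through a dominated-convergence step with a dominating function $h_p$, whereas you invoke directly that strong $L^p$ convergence preserves the norm, which is slightly cleaner but not a different argument.
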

	\begin{proof}
		Let $(u_k) \subset S_m$ be a minimizing sequence for the minimization Problem \eqref{minimization3}, that is,  $\displaystyle \gamma_m = \lim_{k \to \infty}J(u_k)$. Now, by using Remark $\ref{ob1}$, we see that $(u_k)$ is bounded in $X$. Hence, up to a subsequence, there exists a function $u \in X$ such that $u_k \rightharpoonup u$ in $X$. It remains to prove that $u \not\equiv 0$. In order to do that we apply the compact embedding given in Proposition \ref{compa} proving that there exists $h_t \in L^{t}(\mathbb{R}^N)$ in such way that
		\begin{equation}\left\{\begin{array}{lll}u_k \to u \in L^t(\mathbb{R}^N), t \in [p, p^*_s)\\
				u_k \to u\ \hbox{a. e. in}\ \mathbb{R}^N\\
				|u_k| \leq h_t, h_t \in L^t(\mathbb{R}^N), t \in [p, p^*_s)
			\end{array}\right.
		\end{equation}
		In particular, taking $t = p$, we can apply the Dominated Convergence Theorem proving that
		$$\lim_{k \to \infty} \int_{\mathbb{R}^N} |u_k|^p dx = \int_{\mathbb{R}^N}|u|^p dx = m^p > 0.$$
		As a consequence, we obtain that   $u \not\equiv 0$ and $u \in S_m$. This ends the proof.
	\end{proof}
	
	\textit{The proof of Theorem \ref{TB4}} Let $(u_k)$ be a minimizing sequence for the minimization Problem \eqref{minimization3}. Then, up to a subsequence, there exists  $u \in S_m$ such that $u_k \rightharpoonup u$ in $X$, see Lemma \ref{weak}. Therefore, by using the compact embedding given in Proposition \ref{compa} and taking into account that the norm is lower weakly semicontinuous, we infer that
	\begin{eqnarray}\nonumber \displaystyle J(u) &=& \dfrac{1}{p}[u]^p + \dfrac{1}{p}\int_{\mathbb{R}^N}V(x)|u|^p dx - \dfrac{\beta}{q}\int_{\mathbb{R}^N} |u|^q dx\\
		\nonumber \displaystyle &\leq& \liminf_{k \to \infty}\frac{1}{p}\|u_k\|^p - \lim_{k \to \infty} \dfrac{\beta}{q}\|u_k\|^q_q = \lim_{k \to \infty} J(u_k) = \gamma_m.
	\end{eqnarray}
	Hence, $J(u) = \gamma_m$, that is, $u$ is a local minimizer for $J$ restricted to $S_m$. Furthermore, arguing as was done in the proof of Theorem \ref{TB1}, we infer that $u > 0$ in $\mathbb{R}^N$. Hence, there exists $\lambda$ in such way that $J'(u)\varphi = \lambda\Psi'(u)\varphi$ holds for each $\varphi \in X$. Here, we emphasize that $\Psi: X \to \mathbb{R}$ given by $$\Psi (u) = \frac{1}{p}\int_{\mathbb{R}^N}|u|^p dx, u \in X$$ is a $C^1$ functional. Therefore, the function $u$ satisfies the following identity
	$$\left<u, \varphi\right> = \lambda \int_{\mathbb{R}^N} |u|^{p - 2}u\varphi dx + \beta\int_{\mathbb{R}^N} |u|^{q - 2}u \varphi dx. $$ As a consequence, $u$ is a weak solution to Problem \eqref{P}. 
	This ends the proof. \hfill\cqd
	
	\subsection*{Acknowledgments}
	The  author first was also partially supported by CNPq with grant 309026/2020-2. The second author was partially supported by Fapeg with grant 202110267000424.
	
	\subsection*{Author Contribution}	In the present work authors have the same contribution. All authors also reviwed the manuscript in the present version for this work.
	
	\subsection*{Funding} Edcarlos D. Silva thanks to CNPq for financial support through the Project 309026/2020-2, Brazil.
	J. L. A. Oliveira thanks to Fapeg for financial support through the Project 202110267000424, Brazil.
	
	\subsection*{Availability of data and material} For the present work the availability of data and material is not applicable.
	
	\subsection*{Ethics approval and consent to participate} This item is not applicable for the present work.
	
\subsection*{Consent for publication} All authors read and approved the final version for the present manuscript.
	
	\subsection*{Declarations}
	
	\subsection*{Conflict of Interest}
	The authors declare that they have no conflict of interest.
	
%	\subsection*{Publisher’s Note}
%	Springer Nature remains neutral with regard to jurisdictional claims in published maps
%	and institutional affiliations.
	
%	Springer Nature or its licensor (e.g. a society or other partner) holds exclusive rights to this article
%	under a publishing agreement with the author(s) or other rightsholder(s); author self-archiving of the
%	accepted manuscript version of this article is solely governed by the terms of such publishing agreement
%	and applicable law.

\end{document}